\DeclareMathOperator{\Var}{Var}
\DeclareMathOperator{\Dir}{Dir}
\DeclareMathOperator{\KL}{KL}
\theoremstyle{remark}
\theoremstyle{plain}
\newtheorem{theorem}{Theorem}[section]
\newtheorem{proposition}[theorem]{Proposition}
\newtheorem{lemma}[theorem]{Lemma}
\theoremstyle{definition}
\newtheorem{assumption}[theorem]{Assumption}
\newtheorem{definition}[theorem]{Definition}
\newtheorem{remark}[theorem]{Remark}
\title{Decision–Theoretic Robustness for Network Models}
\author{
  Marios Papamichalis\thanks{Human Nature Lab, Yale University, New Haven, CT 06511, \texttt{marios.papamichalis@yale.edu}} \and
  Regina Ruane\thanks{Department of Statistics and Data Science, The Wharton School at the University of Pennsylvania, 3733 Spruce Street, Philadelphia, PA 19104-6340, \texttt{ruanej@wharton.upenn.edu}} \and
  Sim\'on Lunag\'omez\thanks{Department of Statistics, ITAM, Mexico, \texttt{simon.lunagomez@itam.mx}} \and
  Swati Chandna\thanks{Birkbeck University of London, \texttt{s.chandna@bbk.ac.uk}}
}
\date{\today}
\begin{document}
\maketitle

\begin{abstract}
Network data from neuroscience, epidemiology, and the social sciences are routinely analyzed with Bayesian network models such as Erd\H{o}s-R\'enyi graphs, stochastic block models, random dot product graphs, and graphon priors. In such applications, these models are only approximations, whereas real networks are sparse, heterogeneous, and exhibit higher-order dependencies that no single specification fully captures. This raises the question: how stable are network-based decisions, model selection,
functional summaries, and policy recommendations under small misspecification of the assumed model? We address this question using a local decision-theoretic robustness framework, in which the posterior distribution is allowed to vary within a small Kullback-Leibler neighborhood and the actions are chosen to minimize the worst-case posterior expected loss. The specialized application of this framework to exchangeable network models is driven
by the availability of low-dimensional network functionals. First, we adapt Decision–Theoretic robustness to exchangeable graphs via graphon limits and derive sharp small-radius expansions for the robust posterior risk. For squared loss, the leading inflation term is shown to be controlled by the posterior variance of the loss. For robustness indices that diverge at percolation and fragmentation thresholds, we obtain a universal critical exponent describing how decision--level uncertainty explodes near criticality. Second, we develop a nonparametric minimax theory for decision–theoretic robust model selection between sparse Erd\H{o}s--R\'enyi and stochastic block models. For percolation--type robustness functionals in configuration models and sparse graphon classes, these show that no Bayesian or frequentist procedure can improve the resulting decision–theoretic robustness error exponents uniformly over these classes. Third, we propose a practical algorithm for robust network analysis based on entropic tilting of posterior or variational samples and illustrate its use on functional brain connectivity and Karnataka village social networks. Together, these results provide a decision--theoretic notion of robustness for Bayesian network analysis that complements classical object--level concepts of network resilience.
\end{abstract}

\section{Introduction}
\label{sec:intro}

Network data emerge from a multitude of sources from functional brain connectivity and the spread of infectious diseases to village social networks and online platforms. Bayesian network models have become a central tool for the analysis of such data, providing a coherent framework for the learning of latent structure as well as comparing competing representations and the propagation of uncertainty into predictions and policy decisions. Popular choices include Erd\H{o}s--R\'enyi (ER) graphs, stochastic block models (SBMs), latent space and random dot product graph models, and nonparametric graphon priors.\\

In practice, these models are only approximations. Empirical networks are often sparse and highly heterogeneous, with communities, hubs, degree variability, and higher--order dependence only partially captured by any single specification. Yet when deciding on model choice, choosing between community and latent--space representations entails ranking interventions by their expected impact, or comparing robustness indices across networks. These are typically reported without a systematic assessment of their sensitivity to misspecification. The central question of this paper is:\\

\medskip
\emph{How does the quality of network--based decisions degrade when the assumed Bayesian network model is only approximately correct?}\\
\medskip

We address this question by adopting the local decision--theoretic robustness framework of \citet{WatsonHolmes2016}. In this perspective, Bayesian analysis is formulated as a decision problem, where an action is chosen to minimize a posterior expected loss and robustness is assessed by allowing the posterior to move within a small Kullback--Leibler (KL) (or more general $\phi$--divergence) neighborhood of a working model. Then, evaluation of the worst--case posterior risk over this neighborhood is conducted \citep{WatsonHolmes2016,watson2017characterizing}. The resulting increase in risk quantifies how fragile a Bayes decision is to local misspecification of the likelihood or prior. This approach extends classical ideas in robust Bayesian analysis such as $\Gamma$--minimax rules and $\varepsilon$--contamination \citep{Berger2,Vidakovic}, and is closely related to variational representations of ambiguity--averse preferences in economics \citep{Maccheroni,Hansen} and to recent proposals tempering or coarsening the posterior to mitigate misspecification \citep{Miller2019,avella2022robustness}. Our aim is to bring this \emph{decision--level} robustness perspective into the setting of exchangeable network models, and to connect it both to graph limit theory and classical notions of network robustness based on percolation and resilience.\\

\paragraph{Object--level versus decision--level robustness.}
A large body of literature in network science discusses robustness at the \emph{object level} by perturbing the graph itself. Percolation analyses on random graph models quantify how the giant component collapses as nodes or edges are removed. \citet{CallawayEtAl2000} and \citet{CohenEtAl2000} demonstrated that ER networks can lose global connectivity under relatively modest random failures, whereas networks with heavy--tailed degree distributions (as in scale--free or configuration models) are remarkably resilient. These results have been refined using scaling limits and critical exponents in configuration models \citep{JanssenEtAl2014,VDHofstadBook2017}. Recent surveys emphasize the ubiquity of such phenomena across domains \citep{artime2024robustness,KawasumiHasegawa2024}. Variants consider structured or adversarial perturbations that include heavy--tailed spatial networks, which may retain a giant component even under arbitrary node removals \citep{JacobMoerters2017}, and preferential--attachment networks that can remain connected under targeted attacks once their degree baseline is accounted for \citep{HasheminezhadBrandes2023}. \\

A complementary line of work studies the robustness of \emph{inference procedures} to perturbations of the observed graph. Examples include the stability of spectral community detection when an underlying SBM is corrupted by a geometric random graph or other edge perturbation \citep{PechePerchet2020,stephan19a}, and minimax rates for estimating graph parameters when a fraction of nodes or edges is adversarially corrupted \citep{AcharyaEtAl2023}. These contributions quantify how structural noise or adversarial modifications of the adjacency matrix affect network topology or specific algorithms.\\

By contrast, we focus on \emph{decision--level} robustness in the Bayesian sense. Instead of tweaking the observed graph, we study how the posterior distribution on model parameters---and the decisions or predictions derived from it---responds to small deviations from the assumed data--generating mechanism. This perspective traces back to Wald's decision theory \citep{Wald} and to robust Bayes formulations that seek procedures that perform well over neighborhoods of a putative model \citep{Berger2,Vidakovic}. Here we specialize the decision--theoretic robustness framework of \citet{WatsonHolmes2016} to network models and combine it with graph limit theory with the representation of node--exchangeable random graphs via graphons \citep{Diaconis,Lovasz,Aldous1981}. This representation allows for the construction of local KL neighborhoods in the space of network--generating distributions and the study of how posterior perturbations propagate to network summaries of interest.\\

\paragraph{Network functionals, critical behavior, and decision--level uncertainty.}
In many applications the object of interest is not the full network or its parameter vector but a low--dimensional \emph{network functional} summarizing some aspect of behavior. Examples include epidemic thresholds and steady--state infection prevalence for susceptible--infected--susceptible (SIS) dynamics on graphons \citep{VizueteEtAlGraphonSIS}; percolation--based robustness indices built from the size of the largest connected component under node removal \citep{CallawayEtAl2000,artime2024robustness}; and spectral quantities governing diffusion and consensus dynamics on networks, such as algebraic connectivity and consensus coherence \citep{ZhangFataSundaramTCNS}. A common feature across these settings is the presence of critical thresholds---percolation or reproduction thresholds---at which the network undergoes a qualitative change in connectivity or dynamical behavior. Near such thresholds, robustness indices often diverge and become extremely sensitive to small changes in model parameters.\\

Our first goal is to understand how \emph{decision--level} uncertainty in these functionals behaves under local decision--theoretic robustness perturbations of the posterior. For a broad class of fragmentation--type indices, including susceptibility, the largest--component based measures in ER, and configuration models, we show that the robust posterior risk admits a sharp small--radius expansion. For squared loss, worst--case risk over a KL ball of radius $C$ increases at first order like the square root of $C$, with a coefficient determined by the posterior variance of the loss. Moreover, as the underlying network approaches a percolation or fragmentation threshold, the leading term in this robust risk diverges with a universal critical exponent: decision--level uncertainty inflates like the fourth power of the inverse distance to criticality, a sharper surge than that of the functional itself. This links classical phase transitions in random graphs \citep{VDHofstadBook2017} to quantitative statements regarding the fragility of network--based decisions.\\

Our second goal is to characterize the fundamental limits of decision--theoretic robust model selection against competing network models. We focus on sparse ER graphs versus two--block SBMs, both in labeled form and via their sparse graphon representations. For this two--point experiment, we derive explicit per--vertex Kullback--Leibler and Chernoff information indices $I(\lambda)$ and $J(\lambda)$, where $\lambda$ is a signal--to--noise parameter, and show that $J(\lambda)$ plays the role of a decision--theoretic robustness ``noise index'' governing robust Bayes factor testing. We then embed this pair into large nonparametric classes of sparse graphs, including configuration models and sparse graphons, and prove that no estimator or posterior---robustified model can achieve a better decision--theoretic robustness error exponent than $J(\lambda)$ uniformly over these classes. An analogous minimax phenomenon holds for near--critical percolation--based robustness indices in configuration models. Thus, the decision--theoretic robustness exponents we identify are intrinsic to the underlying network problems, rather than artifacts of a particular modeling choice.\\

For the computational aspect, we treat decision--theoretic robustness as a modular layer built on the available approximate posterior for a given network model. In practice, we work with variational posteriors for SBMs and random dot product graphs, spectral or moment--based pseudo--posteriors, e.g. based on spectral embeddings or degree moments, and for small graphs, conventional MCMC samplers. Robustification is then implemented by entropic tilting of posterior or variational samples to solve the KL--ball optimization and by a mirror--descent adversary in weight space for more general $\phi$--divergence balls. These procedures require only the ability to evaluate losses on posterior draws and add modest overhead to existing inference pipelines.\\

We illustrate the methodology on two substantive examples. In a functional brain connectivity network, we compare community and latent--space representations and assess the decision--theoretic robustness of connectivity--based summaries. In social networks from Karnataka villages, we revisit the diffusion experiments of \citet{banerjee2013diffusion} and study conclusions about diffusion pathways and intervention targeting change under decision--theoretic robustness perturbations of competing network models and priors. In both cases, decision--theoretic robustness sensitivity analysis emerges when seemingly strong network conclusions rely on fragile modeling assumptions.\\

This paper formalizes decision--theoretic robust Bayes decisions for exchangeable network models via graphon limits, deriving sharp, small--radius expansions for robust posterior risk. Additionally, this entails identifying universal critical exponents for percolation--type robustness indices near fragmentation thresholds, developing a fully nonparametric minimax theory for decision--theoretic robust model selection between sparse ER graphs and SBMs and for robustness functionals in configuration models and sparse graphon classes, establishing the optimality of the associated decision--theoretic robustness noise exponents; proposing a computational strategy for robust network analysis based on entropic tilting of posterior and variational samples and a mirror--descent adversary for general $\phi$--divergence balls and we demonstrate its use on brain connectivity networks and on the Karnataka village social networks studied by \citet{banerjee2013diffusion}.\\

The rest of the paper is organized as follows. Section~\ref{sec:background} reviews exchangeable random graphs, graphons, and the Bayesian network models used in our examples. Section~\ref{sec:WH-network-functionals} develops the general decision--theoretic robustness theory for network functionals and establishes the critical exponent for fragmentation--type indices. Section~\ref{sec:WH-ER-SBM} presents the nonparametric decision--theoretic robustness minimax theory for sparse ER versus SBM, and Section~\ref{sec:WH-percolation} studies configuration models and percolation--based robustness indices. Section~\ref{sec:computation} describes our computational scheme based on entropic tilting and mirror descent. Section~\ref{sec:experiments} reports empirical studies on functional brain connectivity and Karnataka village networks. Section~\ref{sec:discussion} discusses implications and directions for future work.

\section{Background and setup}
\label{sec:background}

This section reviews the exchangeable network framework used throughout
the paper, specifically how we construct Kullback--Leibler (KL) neighborhoods
around a working model and summarizes the Bayesian network models and
posterior approximations that feed our decision--theoretic
robustness analysis.

\subsection{Exchangeable network models and graph limits}
\label{subsec:exchangeable}

Let $G_n$ be a simple undirected graph on vertex set $[n]=\{1,\ldots,n\}$ with
adjacency matrix $A^{(n)}=(A_{ij})_{1\le i,j\le n}$.  We say that
$(G_n)_{n\ge 1}$ (or $(A^{(n)})_{n\ge 1}$) is \emph{node--exchangeable} if,
for every $n$ and every permutation $\sigma$ of $[n]$,
\[
  \bigl(A^{(n)}_{ij}\bigr)_{1\le i<j\le n}
  \stackrel{d}{=}
  \bigl(A^{(n)}_{\sigma(i)\sigma(j)}\bigr)_{1\le i<j\le n}.
\]
By the Aldous--Hoover representation, any such dense node--exchangeable
sequence can be represented (up to measure--preserving transformations) by a
\emph{graphon} $W\colon[0,1]^2\to[0,1]$ and i.i.d.\ latent positions
$U_1,U_2,\dots\sim{\rm Unif}[0,1]$:
\begin{equation}
  \label{eq:graphon-sampling}
  A_{ij}\,\big|\,U_{1:n}
  \;\sim\;
  {\rm Bernoulli}\bigl(W(U_i,U_j)\bigr),
  \qquad 1\le i<j\le n,
\end{equation}
independently across unordered pairs $(i,j)$, where $U_{1:n}=(U_1,\ldots,U_n)$.
Many familiar dense models admit natural graphon representations:
\begin{itemize}
  \item Erd\H{o}s--R\'enyi (ER) graphs, where $W(x,y)\equiv p$ is constant;
  \item stochastic block models (SBMs), where $W$ is a step function on a
        finite partition of $[0,1]$;
  \item random dot product graphs (RDPGs) with bounded latent positions, where
        $W(x,y)=\langle \xi(x),\xi(y)\rangle$ for a latent feature map
        $\xi$.
\end{itemize}
Conversely, any graphon $W$ can be approximated in cut norm by step
functions, so SBMs form a convenient finite--dimensional approximation class
for both theory and computation.

\paragraph{Graphon KL neighborhoods.}
For a graphon $W$ and fixed $n$, let $\mathcal{G}(W)$ denote the law of
$G_n$ generated by \eqref{eq:graphon-sampling}.  Given a \emph{working}
graphon $W^\star$ with associated graph law $\mathcal{G}^\star :=
\mathcal{G}(W^\star)$, it is natural at the object level to consider the
KL--ball
\begin{equation}
  \label{eq:object-KL-ball}
  \Gamma_C(\mathcal{G}^\star)
  :=
  \bigl\{
    \mathcal{G}(W) :
    {\rm KL}\bigl(\mathcal{G}(W)\,\Vert\,\mathcal{G}^\star\bigr)\le C
  \bigr\},
  \qquad C>0,
\end{equation}
where the divergence is taken between the induced distributions on
adjacency matrices of size $n$.  In our dense simulations, we use
\eqref{eq:object-KL-ball} to visualize local neighborhoods of a fitted
model and explore how graphon--level perturbations propagate to network
functionals.\\

In practice, we approximate $W^\star$ and candidate $W$ by step--function
SBMs on a fixed grid.  Partition $[0,1]$ into $K$ bins, regard each bin as a
block, and replace $W$ by the $K\times K$ matrix of block means.  This
yields a finite--dimensional parameterization $P\in[0,1]^{K\times K}$ and
an associated random graph law on $[n]$.  The KL divergence
${\rm KL}(\mathcal{G}(W)\,\Vert\,\mathcal{G}^\star)$ can then be estimated
by Monte Carlo over latent positions:
\[
  {\rm KL}\bigl(\mathcal{G}(W)\,\Vert\,\mathcal{G}(W^\star)\bigr)
  \approx
  \frac{1}{M}
  \sum_{m=1}^M
  \sum_{1\le i<j\le n}
  {\rm kl}\!\left(
    W\bigl(U_i^{(m)},U_j^{(m)}\bigr),
    W^\star\bigl(U_i^{(m)},U_j^{(m)}\bigr)
  \right),
\]
where $(U_1^{(m)},\ldots,U_n^{(m)})$ are i.i.d.\ draws from ${\rm Unif}[0,1]$.
Equivalently, at the continuum level, one may work with the per--edge graphon
divergence
\begin{equation}
  \label{Eq:KullLieb}
  {\rm KL}(W\Vert W^\star)
  :=
  \int_{[0,1]^2}
    {\rm kl}\bigl(W(x,y),W^\star(x,y)\bigr)\,{\rm d}x\,{\rm d}y,
\end{equation}
where ${\rm kl}(p,q)$ denotes the Bernoulli Kullback--Leibler divergence.

\paragraph{Local perturbations inside the KL ball.}
When we study \emph{object--level} robustness of a fitted dense model, generating nearby exchangeable graphons inside the ball is a useful approach
$\Gamma_C(\mathcal{G}^\star)$.  For step--graphon approximations of SBMs and
random dot product graphs, it is convenient to view the collection of block
edge probabilities as a single probability vector on $K^2$ cells and to
perturb this vector by Dirichlet (generalized Bayesian bootstrap) draws and
simple rescaling moves.\\

Formally, these perturbations induce a Markov chain on the space of
$K$--block SBMs inside $\Gamma_C(\mathcal{G}^\star)$.  In the Supplementary Material, we
show that:
\begin{itemize}
  \item Dirichlet perturbations have a simple closed form for their expected
        KL divergence from the working model (Proposition~S.1); and
  \item the induced Markov chain is $\psi$--irreducible and aperiodic on the
        interior of the KL ball, so any exchangeable step--graphon model
        inside $\Gamma_C(\mathcal{G}^\star)$ is reachable with positive
        probability (Theorem~S.1).
\end{itemize}
These results are used only for exploratory dense simulations; all of our
\emph{decision--level} robustness computations in Section~\ref{sec:computation}
are based instead on entropic tilting of posterior samples and mirror
descent in weight space.\\

Throughout the rest of the paper we use the graphon representation mainly
for dense node--exchangeable models and as a convenient way to define and
visualize local neighborhoods of a working model.  For sparse networks we
work instead with explicit parametric or nonparametric models (sparse ER,
SBMs, configuration models, spatial models) and derive decision--theoretic
robustness properties directly at the level of their finite--$n$ laws.

\subsection{Bayesian modelling for networks}
\label{subsec:bayes-networks}

We now briefly describe the Bayesian network models that underpin our
decision problems and our decision--theoretic robustness analysis.  In all
cases a parameter $\theta\in\Theta$ indexes a family of random graph laws
$\{P_\theta^{(n)}:\theta\in\Theta\}$ on graphs $G_n$ with $n$ vertices, and
a prior $\Pi$ is placed on~$\Theta$.

\paragraph{Parametric models.}
For sparse networks we consider:
\begin{itemize}
  \item \emph{Sparse Erd\H{o}s--R\'enyi (ER)} models, where
        $\theta=p_n$ controls the edge probability (typically with
        $p_n\asymp c/n$);
  \item \emph{Stochastic block models (SBMs)}, with parameters
        $(\pi,B)$ for block proportions and within/between--block edge
        probabilities, in both labelled and unlabelled (graphon) forms;
  \item \emph{Configuration models}, parameterised by a degree
        distribution $\mu$, which provide a flexible benchmark for
        percolation--based robustness indices.
\end{itemize}
For dense networks (e.g.\ the brain connectivity example in
Section~\ref{sec:experiments}) we work with ER, SBMs and low--rank latent
position models, all of which admit graphon representations.

\paragraph{Nonparametric and graphon priors.}
To capture more of the complex structure, we consider nonparametric priors on
graphons, such as finite or infinite mixtures of SBMs and smooth
kernel--based priors.  These priors are defined on the space of symmetric
measurable functions $W\colon[0,1]^2\to[0,1]$ modulo measure--preserving
transformations and induce exchangeable random graph laws via
\eqref{eq:graphon-sampling}.  In the sparse regime we combine these with
rescaling schemes or degree--corrected constructions, following
\cite{WatsonHolmes2016,watson2017characterizing}.

\paragraph{Baseline posterior and pseudo--posterior inference.}
Exact Bayesian inference for network models is typically infeasible at
moderate or large $n$, so we work with scalable approximations:
\begin{itemize}
  \item variational posteriors for SBMs, latent space models and random
        dot product graphs;
  \item spectral or method--of--moments estimators wrapped in a
        pseudo--Bayesian framework, where an approximate likelihood and
        an explicit prior yield a tractable pseudo--posterior;
  \item in small networks only, conventional MCMC samplers (Gibbs, HMC) as
        a baseline comparison.
\end{itemize}
In all cases, the decision--theoretic robustness machinery in
Sections~\ref{sec:WH-network-functionals} and~\ref{sec:computation} treats
the resulting posterior or pseudo--posterior
$\Pi_{0,n}(\cdot\mid G_n)$ as the \emph{baseline} distribution on $\theta$.
Robustness is then defined by allowing $\Pi_{0,n}$ to vary within a
divergence ball and computing worst--case posterior risks via entropic
tilting and mirror descent, as described in
Section~\ref{sec:computation}.\\

With this background, we turn to the general decision--theoretic
robustness theory for network functionals and to the critical exponents that
govern their robust posterior risk.

\section{General decision--theoretic robustness theory for network functionals}

\subsection{Decision--theoretic robustness for network functionals}
\label{sec:WH-network-functionals}

In many network applications, the ultimate decision depends on a
low--dimensional functional of a complex random graph model: an epidemic
threshold, a robustness index, a consensus coherence, or a spectral gap.
Decision--theoretic robustness asks how sensitive such decisions are to small
local misspecifications of the posterior distribution on the model parameters,
measured by Kullback--Leibler (KL) divergence.  Following
\citet{WatsonHolmes2016,watson2017characterizing}, we define a local
decision--theoretic robustness criterion by considering the worst--case
posterior expected loss over a KL neighborhood of a working posterior.  We
refer to this as \emph{decision--theoretic robustness}.

\subsubsection{Decision problems driven by network functionals}
\label{subsec:decisions}

Let $\Theta\subset\mathbb{R}^p$ be a parameter space indexing a family of
network laws $\{P_\theta^{(n)}:\theta\in\Theta\}$ on graphs $G_n$ with $n$
vertices.  Typical examples include:
\begin{itemize}
  \item sparse or dense Erd\H{o}s--R\'enyi graphs and stochastic block
        models (SBMs);
  \item configuration models and graphons with prescribed degree or
        community structure;
  \item geometric graphs and spatial scale--free models.
\end{itemize}
Let $R(\theta)\in\mathbb{R}$ be a \emph{network functional} of interest, such
as:
\begin{itemize}
  \item the limiting SIS noise index on a graphon
        \cite{VizueteEtAlGraphonSIS};
  \item a percolation--based robustness index (e.g.\ area under the largest
        component curve) as in
        \citet{artime2024robustness};
  \item the asymptotic consensus coherence on spatial lattices or
        random graphs \citep{ZhangFataSundaramTCNS}.
\end{itemize}

Given a prior $\Pi$ on $\Theta$ and an observed graph $G_n\sim
P_{\theta_0}^{(n)}$, let $\Pi_{0,n}(\cdot\mid G_n)$ be a (possibly
pseudo--)posterior on~$\theta$.  For squared loss $L(a,\theta)=(a-R(\theta))^2$,
the Bayes action is
\[
  a_n^\star
  :=
  \int R(\theta)\,\Pi_{0,n}(\mathrm{d}\theta),
\]
with baseline posterior risk
\[
  \rho_{0,n}
  :=
  \int\bigl(a_n^\star-R(\theta)\bigr)^2\,
       \Pi_{0,n}(\mathrm{d}\theta)
  =
  \Var_{\Pi_{0,n}}\bigl(R(\theta)\bigr).
\]

To study local misspecification in the sense of \citet{WatsonHolmes2016}, we
consider the posterior KL ball
\[
  \mathcal{U}_C\bigl(\Pi_{0,n}\bigr)
  :=
  \bigl\{
    \widetilde\Pi :
    {\rm KL}\bigl(\widetilde\Pi\Vert\Pi_{0,n}\bigr)\le C
  \bigr\},
\]
and define the corresponding robust posterior risk
\[
  \rho_{\mathrm{rob},n}(C)
  :=
  \sup_{\widetilde\Pi\in\mathcal{U}_C(\Pi_{0,n})}
  \int\bigl(a_n^\star-R(\theta)\bigr)^2\,
       \widetilde\Pi(\mathrm{d}\theta).
\]
The difference $\rho_{\mathrm{rob},n}(C)-\rho_{0,n}$ measures how much the
worst--case posterior expected loss can inflate under local posterior KL
perturbations of radius~$C$.

\subsubsection{Generic critical exponent for fragmentation--type indices}
\label{subsec:WH-critical-generic}

Many robustness and resilience indices in networks diverge at a
\emph{fragmentation threshold}, typically controlled by a scalar load
parameter such as a branching factor or a spectral radius.  Examples include:
\begin{itemize}
  \item susceptibility or expected component size in sparse ER or
        configuration models, such as in
        \citep{VDHofstadBook2017};
  \item percolation--based robustness indices built from the largest
        component under node removal, such as in
        \citep{artime2024robustness};
\end{itemize}

We abstract this behavior as follows.  Let $\rho\colon\Theta\to\mathbb{R}$ be a
smooth \emph{load parameter} (e.g.\ effective branching factor or spectral
radius), and define the distance to criticality
\[
  \Delta(\theta) := 1-\rho(\theta).
\]

\begin{assumption}[Critical robustness functional]
\label{ass:critical-R}
There exist a true parameter $\theta_0\in\Theta$, a neighborhood
$\mathcal{N}$ of $\theta_0$, a constant $c_0>0$ and a function
$H\colon\Theta\to\mathbb{R}$ such that:
\begin{enumerate}
  \item $\Delta(\theta_0)=\Delta_0>0$ and
        $\nabla_\theta \rho(\theta_0)\neq 0$;
  \item for all $\theta\in\mathcal{N}$,
        \[
          R(\theta)
          =
          \frac{c_0}{\Delta(\theta)} + H(\theta);
        \]
  \item $H$ is $C^2$ and bounded on $\mathcal{N}$, and
        $\|\nabla_\theta R(\theta_0)\|\asymp\Delta_0^{-2}$ as
        $\Delta_0\downarrow 0$.
\end{enumerate}
\end{assumption}

Assumption~\ref{ass:critical-R} describes the situation in which the
robustness functional $R(\theta)$ diverges like $1/\Delta(\theta)$ as the
network approaches a fragmentation threshold, with a gradient that blows up
like $\Delta(\theta)^{-2}$.\\

We also assume a local Bernstein--von Mises behavior for the posterior.

\begin{assumption}[Local posterior asymptotics]
\label{ass:local-BvM}
There exists a scaling $r_n\downarrow 0$ and a positive definite matrix
$\Sigma$ such that, for each bounded continuous
$\varphi\colon\mathbb{R}^p\to\mathbb{R}$,
\[
  \int
    \varphi\Bigl(\frac{\theta-\theta_0}{r_n}\Bigr)\,
    \Pi_{0,n}(\mathrm{d}\theta)
  \xrightarrow[n\to\infty]{P_{\theta_0}^{(n)}}
  \int\varphi(z)\,\phi_\Sigma(\mathrm{d}z),
\]
where $\phi_\Sigma$ is the $N(0,\Sigma)$ law.  Equivalently, under
$\Pi_{0,n}$,
\[
  \theta = \theta_0 + r_n Z_n,
  \qquad Z_n\Rightarrow Z\sim N(0,\Sigma)
\]
in $P_{\theta_0}^{(n)}$--probability.
\end{assumption}

The next theorem summarizes the generic behavior of such critical robustness
indices.
\begin{theorem}[Robust critical exponent for fragmentation--type indices]
\label{thm:WH-critical-robustness}
\label{thm:critical-exponent} 

Suppose Assumptions~\ref{ass:critical-R} and~\ref{ass:local-BvM} hold. 
Let $\Delta_0:=\Delta(\theta_0)>0$ be the distance of the true network to the
fragmentation threshold, and allow $\Delta_0=\Delta_0^{(n)}\downarrow 0$ with
$\Delta_0^{(n)}\gg r_n$ as $n\to\infty$.  Assume, moreover, that:
\begin{enumerate}
  \item the decomposition in Assumption~\ref{ass:critical-R} implies
  \(
    \|\nabla_\theta R(\theta_0)\|\asymp\Delta_0^{-2}
  \)
  as $\Delta_0\downarrow 0$;
  \item under Assumption~\ref{ass:local-BvM}, the laws of $Z_n$ have uniformly
  bounded second and fourth moments (in $P_{\theta_0}^{(n)}$--probability);
  \item the exponential--moment condition in Theorem~\ref{thm:sharp-small-KL}
  holds uniformly for the normalized losses $L_n/\rho_{0,n}$ (so that the
  $o(\sqrt C)$ remainder in that theorem can be chosen uniformly in $n$ for
  small $C$).
\end{enumerate}
Then:
\begin{enumerate}
  \item \textbf{Baseline posterior risk.}
        There exists $V_0\in(0,\infty)$ such that
        \begin{equation}
          \rho_{0,n}
          =
          \Var_{\Pi_{0,n}}\bigl(R(\theta)\bigr)
          =
          \frac{V_0\,r_n^2}{\Delta_0^4}\,
          \bigl(1+o_{P_{\theta_0}^{(n)}}(1)\bigr).
          \label{eq:WH-R-critical-rho0-main}
        \end{equation}
        In particular, the posterior mean--squared error for $R(\theta)$ scales like $\Delta_0^{-4}$ as the fragmentation threshold is approached.

  \item \textbf{Sharp inflation.}
        For any deterministic sequence $\mathcal{C}_n\downarrow 0$,
        \begin{equation}
          \rho_{\mathrm{rob},n}(\mathcal{C}_n)
          =
          \rho_{0,n}
          + 2\,\rho_{0,n}\sqrt{\mathcal{C}_n}
          + o_{P_{\theta_0}^{(n)}}\bigl(
               \rho_{0,n}\sqrt{\mathcal{C}_n}
            \bigr).
          \label{eq:WH-R-critical-rob-main}
        \end{equation}
        Equivalently,
        \[
          \frac{
            \rho_{\mathrm{rob},n}(\mathcal{C}_n)-\rho_{0,n}
          }{
            \rho_{0,n}\sqrt{\mathcal{C}_n}
          }
          \xrightarrow[n\to\infty]{P_{\theta_0}^{(n)}} 2.
        \]

  \item \textbf{Sharpness.}
        For any $k<2$ there exists a (deterministic) sequence $\mathcal{C}_n\downarrow 0$
        such that, for all sufficiently large $n$,
        \[
          \rho_{\mathrm{rob},n}(\mathcal{C}_n)
          >
          \rho_{0,n} + k\,\rho_{0,n}\sqrt{\mathcal{C}_n}
        \]
        with $P_{\theta_0}^{(n)}$--probability tending to~$1$.  Thus, the
        coefficient $2$ in \eqref{eq:WH-R-critical-rob-main} is
        asymptotically optimal.
\end{enumerate}
\end{theorem}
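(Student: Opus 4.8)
The plan is to reduce all three parts to the sharp small--radius expansion of Theorem~\ref{thm:sharp-small-KL}, applied to the rescaled squared loss $\ell_n(\theta):=(a_n^\star-R(\theta))^2/\rho_{0,n}$, once two posterior quantities are controlled: the baseline risk $\rho_{0,n}=\Var_{\Pi_{0,n}}(R(\theta))$ and the posterior variance of the loss $(a_n^\star-R(\theta))^2$. Throughout, probabilistic statements are under $P_{\theta_0}^{(n)}$ and all $o(\cdot)$ and $O(\cdot)$ symbols refer to the joint $n\to\infty$ regime in which $\Delta_0=\Delta_0^{(n)}\downarrow0$ with $\Delta_0\gg r_n$.

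\textbf{Step 1: local linearization of $R$ and the baseline risk.} By Assumption~\ref{ass:local-BvM}, under $\Pi_{0,n}$ write $\theta=\theta_0+r_nZ_n$ with $Z_n\Rightarrow Z\sim N(0,\Sigma)$; the bounded--moment hypothesis gives $\Var_{\Pi_{0,n}}(Z_n)\to\Sigma$ and $\mathbb{E}_{\Pi_{0,n}}Z_n\to0$ in probability. From Assumption~\ref{ass:critical-R}, $R=c_0/\Delta+H$ with $\Delta=1-\rho$, so $\nabla R(\theta_0)=c_0\Delta_0^{-2}\nabla\rho(\theta_0)+\nabla H(\theta_0)$ and $\nabla^2R=O(\Delta^{-3})$ on a neighborhood of $\theta_0$. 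Picking $M_n\uparrow\infty$ with $M_nr_n/\Delta_0\to0$ (possible since $\Delta_0\gg r_n$), on $\{\|Z_n\|\le M_n\}$ one has $\Delta(\theta)\asymp\Delta_0$, and a second--order Taylor expansion gives $R(\theta)-a_n^\star=r_n\,\nabla R(\theta_0)^\top(Z_n-\mathbb{E}_{\Pi_{0,n}}Z_n)+\mathrm{Rem}_n$ with $|\mathrm{Rem}_n|=O(\Delta_0^{-3}r_n^2\|Z_n\|^2)$. The linear part has $\Pi_{0,n}$--variance $r_n^2\,\nabla R(\theta_0)^\top\Sigma\,\nabla R(\theta_0)(1+o_P(1))=c_0^2\,r_n^2\Delta_0^{-4}\,\nabla\rho(\theta_0)^\top\Sigma\,\nabla\rho(\theta_0)(1+o_P(1))$, the $\nabla H$ contributions being of lower order in $\Delta_0$; $\mathbb{E}_{\Pi_{0,n}}[\mathrm{Rem}_n^2]=O(\Delta_0^{-6}r_n^4)=o(r_n^2\Delta_0^{-4})$ because $r_n\ll\Delta_0$; and the tail $\{\|Z_n\|>M_n\}$ contributes negligibly thanks to the uniform exponential--moment hypothesis on $L_n/\rho_{0,n}$. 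This proves part~(1) with $V_0:=c_0^2\,\nabla\rho(\theta_0)^\top\Sigma\,\nabla\rho(\theta_0)\in(0,\infty)$ (finite and strictly positive since $\Sigma\succ 0$ and $\nabla\rho(\theta_0)\neq0$; when $\theta_0=\theta_0^{(n)}$ moves, $V_0$ is the limit of this quantity), and it yields the sharper fact $S_n:=(R(\theta)-a_n^\star)/\sqrt{\rho_{0,n}}\Rightarrow N(0,1)$ under $\Pi_{0,n}$.

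\textbf{Steps 2--3: loss variance, inflation rate, and sharpness.} By continuous mapping $\ell_n=S_n^2\Rightarrow\chi^2_1$, while $\mathbb{E}_{\Pi_{0,n}}[\ell_n]=1$ identically; the uniform exponential--moment hypothesis makes $\ell_n^2$ uniformly integrable, so $\mathbb{E}_{\Pi_{0,n}}[\ell_n^2]\to\mathbb{E}[(\chi^2_1)^2]=3$ and hence $\Var_{\Pi_{0,n}}(\ell_n)=2(1+o_P(1))$. Applying Theorem~\ref{thm:sharp-small-KL} to $\ell_n$ (baseline mean $1$, exponential moment controlled uniformly in $n$, so its $o(\sqrt C)$ remainder is uniform in $n$ for small $C$) gives $\rho_{\mathrm{rob},n}(\mathcal{C}_n)/\rho_{0,n}=1+\sqrt{2\,\mathcal{C}_n\,\Var_{\Pi_{0,n}}(\ell_n)}+o(\sqrt{\mathcal{C}_n})$; substituting $\Var_{\Pi_{0,n}}(\ell_n)=2(1+o_P(1))$ and multiplying by $\rho_{0,n}$ yields $\rho_{\mathrm{rob},n}(\mathcal{C}_n)=\rho_{0,n}+2\,\rho_{0,n}\sqrt{\mathcal{C}_n}+o_{P_{\theta_0}^{(n)}}(\rho_{0,n}\sqrt{\mathcal{C}_n})$, which is part~(2), the displayed ratio limit being an immediate rearrangement. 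Part~(3) is then immediate: part~(2) holds for \emph{every} deterministic $\mathcal{C}_n\downarrow0$, so fixing, say, $\mathcal{C}_n=n^{-1}$ and any $k<2$, $\rho_{\mathrm{rob},n}(\mathcal{C}_n)-\rho_{0,n}-k\rho_{0,n}\sqrt{\mathcal{C}_n}=(2-k+o_P(1))\,\rho_{0,n}\sqrt{\mathcal{C}_n}>0$ with probability tending to $1$; alternatively one invokes the sharpness clause of Theorem~\ref{thm:sharp-small-KL} for $\ell_n$ along a suitable sequence.

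\textbf{Expected main obstacle.} The real work is Step~1: since $R$, $\nabla R$, $\nabla^2 R$ blow up like $\Delta_0^{-1},\Delta_0^{-2},\Delta_0^{-3}$, each Taylor remainder and tail term must be shown negligible \emph{relative to} an already diverging leading order, which is precisely where the separation $\Delta_0\gg r_n$ and the uniform exponential--moment hypothesis (used to bound the posterior variances of $R$ and of the loss on the rare event that $\theta$ drifts toward criticality) enter. The secondary, more bookkeeping, difficulty is transferring the fixed--posterior, fixed--$C$ statement of Theorem~\ref{thm:sharp-small-KL} to the present triangular--array regime (moving $\Pi_{0,n}$, shrinking $\mathcal{C}_n$, rescaled loss $\ell_n$), again underwritten by that hypothesis.
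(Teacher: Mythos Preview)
Your proposal is correct and follows essentially the same route as the paper: linearize $R$ via the local BvM expansion to obtain $\rho_{0,n}\asymp r_n^2\Delta_0^{-4}$, show the normalized loss $S_n^2$ converges to $\chi_1^2$ so that $\Var_{\Pi_{0,n}}(\ell_n)\to 2$, and then invoke Theorem~\ref{thm:sharp-small-KL} uniformly in $n$ to extract the coefficient~$2$ and sharpness. If anything, your Step~1 is slightly more careful than the paper's in explicitly tracking the $\Delta_0^{-3}$ blow-up of the Hessian in the Taylor remainder and using $\Delta_0\gg r_n$ to absorb it, and your explicit identification $V_0=c_0^2\,\nabla\rho(\theta_0)^\top\Sigma\,\nabla\rho(\theta_0)$ is a useful addition.
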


\paragraph{Interpretation.}
Object--level robustness indices such as susceptibility or LCC--based metrics
typically diverge like $1/\Delta(\theta)$ or $1/\Delta(\theta)^2$ as the
network approaches a fragmentation threshold
\citep{VDHofstadBook2017}.
Theorem~\ref{thm:WH-critical-robustness} shows that, once we embed such indices
into a decision--theoretic framework, the \emph{uncertainty} in the index
inflates more sharply, with a universal exponent $4$ in $\Delta_0^{-1}$ and a
universal $\sqrt{\mathcal{C}_n}$ dependence on the radius $\mathcal{C}_n$, with sharp constant
$2$.\\

In the network sections that follow, we verify
Assumption~\ref{ass:critical-R} for concrete robustness indices (e.g.\ the
susceptibility of sparse ER and configuration models) and derive matching
decision--theoretic robustness minimax lower bounds.

\subsection{Extension to general $\phi$--divergence balls}
\label{sec:phi-balls}

We finally note that all of our local decision--theoretic robustness results
extend from KL balls to general $\phi$--divergence balls via a simple
rescaling.

\begin{definition}[$\phi$--divergence and $\phi$--ball]
\label{def:phi-ball}
Let $\phi\colon[0,\infty)\to\mathbb{R}$ be convex with
$\phi(1)=\phi'(1)=0$ and $0<\phi''(1)<\infty$.  For probability measures
$Q$ and $P$ with $Q\ll P$ we define the $\phi$--divergence
\[
  D_\phi(Q\Vert P)
  :=
  \int \phi\!\left(\frac{\mathrm{d}Q}{\mathrm{d}P}\right)\,\mathrm{d}P,
\]
and the corresponding $\phi$--divergence ball of radius $C>0$ centred at
$P$,
\[
  \mathcal{B}_\phi(P;C)
  :=
  \bigl\{Q\ll P : D_\phi(Q\Vert P)\le C\bigr\}.
\]
\end{definition}

\begin{remark}[Local equivalence of KL and $\phi$--balls]
\label{rem:phi-local}
By Csisz\'ar's quadratic approximation,
\[
  D_\phi(Q\Vert P)
  =
  \frac{\phi''(1)}{2}\,\chi^2(Q,P)
  + o\bigl(\chi^2(Q,P)\bigr)
  \qquad\text{as }Q\to P,
\]
and an analogous expansion holds for the Kullback--Leibler divergence
${\rm KL}(Q\Vert P)$.  In particular, for small radii $C$ the $\phi$--ball
$\mathcal{B}_\phi(P;C)$ is locally equivalent to a KL ball of radius
$C_{\mathrm{KL}}=C/\phi''(1)$, up to $o(C)$ terms.
\end{remark}

\begin{remark}[Extension of decision--theoretic robustness exponents]
\label{rem:phi-WH-extension}
Because all of our local decision--theoretic robustness risk expansions and
minimax exponents depend on the divergence radius only through its quadratic
behavior in $\chi^2(Q,P)$, the results proved for KL balls transfer
verbatim to $\phi$--balls after the rescaling $C\mapsto C/\phi''(1)$.
Equivalently, if a given model yields a decision--theoretic robustness
noise index $J$ and a local decision--theoretic robustness risk expansion
involving $\sqrt{C}$ under KL, then the same model under
$\mathcal{B}_\phi(P;C)$ has decision--theoretic robustness index
$J/\sqrt{\phi''(1)}$ and the same $\sqrt{C}$ scaling up to $o(\sqrt{C})$
terms.  Thus all of our exponent--$4$ phenomena and minimax bounds extend
to general $\phi$--divergence balls with a universal factor
$\phi''(1)^{-1/2}$ in the divergence radius.
\end{remark}

\section{Nonparametric minimax theory for sparse ER vs.\ SBM}
\label{sec:WH-ER-SBM}
\label{sec:er-sbm-theory} 

We now specialize the framework to model selection between a sparse
Erd\H{o}s--R\'enyi model and a sparse two--block SBM, and show that:
\begin{enumerate}
  \item the per--vertex Kullback--Leibler and Chernoff information admit explicit limits
        $I(\lambda)$ and $J(\lambda)$;
  \item these limits persist for unlabelled SBMs viewed as sparse graphons;
  \item no estimator/posterior, even with robustification, can beat
        the Chernoff exponent $J(\lambda)$ uniformly over broad
        nonparametric classes.
\end{enumerate}

\subsection{Explicit information exponents for labelled sparse ER vs.\ SBM}
\label{subsec:ER-SBM-labelled}
\label{sec:info-indices} 
\label{sec:LAN-sbm}      

Let $n$ be even.  Fix $c>0$ and a signal parameter $\lambda\in(0,c)$, and set
\[
  p_n := \frac{c}{n},
  \qquad
  p_n^{\mathrm{in}} := \frac{c+\lambda}{n},
  \qquad
  p_n^{\mathrm{out}} := \frac{c-\lambda}{n}.
\]
Let $\mathcal{V}_n=\{1,\dots,n\}$ and fix a balanced partition
$\sigma\colon\mathcal{V}_n\to\{+1,-1\}$ with $n/2$ nodes in each community.

\begin{itemize}
  \item Under $H_0$ (sparse ER), the adjacency matrix
        $A=(A_{ij})_{1\le i<j\le n}$ has independent entries
        $A_{ij}\sim\mathrm{Bernoulli}(p_n)$.  Denote its law by $P_0^{(n)}$.
  \item Under $H_1$ (balanced two--block SBM with known labels), edges are
        independent with
        \[
          A_{ij}\sim
          \begin{cases}
            \mathrm{Bernoulli}(p_n^{\mathrm{in}}), & \sigma(i)=\sigma(j),\\[0.25em]
            \mathrm{Bernoulli}(p_n^{\mathrm{out}}), & \sigma(i)\neq\sigma(j),
          \end{cases}
        \]
        and we denote the law by $P_1^{(n)}$.
\end{itemize}

Let $N_n=\binom{n}{2}$ be the number of edges, and let
$N_n^{\mathrm{in}},N_n^{\mathrm{out}}$ denote the numbers of within/between
edges; one checks
$N_n^{\mathrm{in}}=2\binom{n/2}{2}=n(n-2)/4$ and
$N_n^{\mathrm{out}}=n^2/4$.

\begin{lemma}[KL divergence and per--vertex information]
\label{lem:ER-SBM-KL}
Let $P_0^{(n)}$ denote the Erd\H{o}s--Rényi model with edge probability
$p_n = c/n$, and let $P_1^{(n)}$ denote the symmetric two--block SBM with
equal community sizes and edge probabilities
\[
  p_n^{\mathrm{in}} = \frac{c+\lambda}{n},
  \qquad
  p_n^{\mathrm{out}} = \frac{c-\lambda}{n},
\]
where $c>0$ and $|\lambda|<c$ are fixed (so that $p_n^{\mathrm{in}},p_n^{\mathrm{out}}\in(0,1)$ for all large $n$).
Let $D_n := {\rm KL}\bigl(P_1^{(n)}\Vert P_0^{(n)}\bigr)$ and let
$N_n^{\mathrm{in}}$ and $N_n^{\mathrm{out}}$ denote the number of within--block
and between--block unordered vertex pairs, respectively. Then
\[
  D_n
  =
  N_n^{\mathrm{in}}\,
    {\rm KL}\bigl(\mathrm{Bern}(p_n^{\mathrm{in}})\Vert\mathrm{Bern}(p_n)\bigr)
  +
  N_n^{\mathrm{out}}\,
    {\rm KL}\bigl(\mathrm{Bern}(p_n^{\mathrm{out}})\Vert\mathrm{Bern}(p_n)\bigr).
\]
Moreover,
\[
  \frac{D_n}{n}
  \xrightarrow[n\to\infty]{}
  I(\lambda)
  :=
  \frac{1}{4}\biggl[
    (c+\lambda)\log\frac{c+\lambda}{c}
    + (c-\lambda)\log\frac{c-\lambda}{c}
  \biggr],
\]
so in particular $D_n = I(\lambda)\,n + o(n)$ as $n\to\infty$.  Finally, a
Taylor expansion in $\lambda$ around $0$ yields
\[
  I(\lambda)
  = \frac{\lambda^2}{4c}
    + O\!\left(\frac{\lambda^4}{c^3}\right),
  \qquad \lambda\to 0.
\]
\end{lemma}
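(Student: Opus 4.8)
The plan is to use that both $P_0^{(n)}$ and $P_1^{(n)}$ are product measures over the $\binom n2$ unordered vertex pairs, pass to the sparse limit edge by edge, and finally Taylor--expand the resulting entropy--type expression in $\lambda$. \emph{Step 1 (tensorization).} Under both hypotheses the edge indicators $(A_{ij})_{i<j}$ are mutually independent; $P_0^{(n)}$ makes every pair $\mathrm{Bern}(p_n)$, while $P_1^{(n)}$ makes a within--block pair $\mathrm{Bern}(p_n^{\mathrm{in}})$ and a between--block pair $\mathrm{Bern}(p_n^{\mathrm{out}})$. Additivity of the Kullback--Leibler divergence over product measures then yields the stated decomposition of $D_n$ into $N_n^{\mathrm{in}}$ and $N_n^{\mathrm{out}}$ copies of the respective Bernoulli KL divergences; a direct count gives $N_n^{\mathrm{in}}=2\binom{n/2}{2}=n(n-2)/4$ and $N_n^{\mathrm{out}}=(n/2)^2=n^2/4$, which also sum to $\binom n2$ as a sanity check.

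\emph{Step 2 (sparse asymptotics).} For fixed $\alpha>0$, expanding $\log(1-\alpha/n)$ and $\log(1-c/n)$ to second order shows
\[
  {\rm kl}\!\left(\tfrac{\alpha}{n},\tfrac{c}{n}\right)
  =\frac1n\Bigl[\alpha\log\tfrac{\alpha}{c}+c-\alpha\Bigr]+O(n^{-2}),
\]
uniformly for $\alpha$ in compact subsets of $(0,\infty)$, in particular for $\alpha\in\{c-\lambda,\,c+\lambda\}$. Multiplying by $N_n^{\mathrm{in}},N_n^{\mathrm{out}}=\tfrac{n^2}{4}+O(n)$ and summing, the linear contributions $c-\alpha=\mp\lambda$ from the two block types cancel, leaving
\[
  D_n=\frac n4\Bigl[(c+\lambda)\log\tfrac{c+\lambda}{c}+(c-\lambda)\log\tfrac{c-\lambda}{c}\Bigr]+O(1)
  = I(\lambda)\,n+o(n),
\]
and dividing by $n$ gives the claimed limit.

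\emph{Step 3 (expansion in $\lambda$).} Put $f(\lambda):=(c+\lambda)\log\tfrac{c+\lambda}{c}+(c-\lambda)\log\tfrac{c-\lambda}{c}$, so $I(\lambda)=f(\lambda)/4$. Then $f$ is even, $f(0)=0$, $f'(\lambda)=\log\tfrac{c+\lambda}{c-\lambda}$ so $f'(0)=0$, and $f''(\lambda)=\tfrac{2c}{c^2-\lambda^2}=\tfrac2c\sum_{k\ge0}(\lambda/c)^{2k}$ for $|\lambda|<c$. Integrating twice from $0$ yields $f(\lambda)=\tfrac{\lambda^2}{c}+\tfrac{\lambda^4}{6c^3}+\cdots$, hence $I(\lambda)=\tfrac{\lambda^2}{4c}+O(\lambda^4/c^3)$ as $\lambda\to0$.

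\emph{Main obstacle.} None of the steps is deep; the only care needed is the bookkeeping in Step 2: one must check that the $O(n^{-2})$ per--edge remainder summed over $\Theta(n^2)$ edges is $O(1)$, and that the mismatch $N_n^{\mathrm{in}}-n^2/4=O(n)$ times a per--edge KL of size $O(n^{-1})$ is also $O(1)$, so both are absorbed into the $o(n)$ error, and that the $O(\lambda)$ contributions of the two block types cancel exactly, so the per--vertex limit is precisely $I(\lambda)$ with no spurious offset.
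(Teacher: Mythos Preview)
Your proposal is correct and follows essentially the same route as the paper: tensorize over independent edges, expand the Bernoulli KL to order $1/n$ (your form $\alpha\log(\alpha/c)+c-\alpha$ is exactly the paper's $(c+\delta)\log\tfrac{c+\delta}{c}-\delta$), use $N_n^{\mathrm{in}},N_n^{\mathrm{out}}=n^2/4+O(n)$ so that the $\mp\lambda$ terms cancel, and then Taylor--expand $I(\lambda)$. Your Step~3 is in fact more detailed than the paper, which simply asserts the expansion without computing $f'$ and $f''$.
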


\begin{lemma}[Chernoff information and error exponent]
\label{lem:ER-SBM-Chernoff}
Let
\[
  \mathcal{C}_n
  :=
  \sup_{0\le t\le 1}
  -\log\sum_A
    P_0^{(n)}(A)^{1-t}P_1^{(n)}(A)^t
\]
be the Chernoff information between $P_0^{(n)}$ and $P_1^{(n)}$.
Assume the sparse regime $p_n = c/n$ and
\[
  p_n^{\mathrm{in}} = \frac{c+\lambda}{n},
  \qquad
  p_n^{\mathrm{out}} = \frac{c-\lambda}{n},
  \qquad
  |\lambda|<c,
\]
with two equally sized blocks of size $n/2$, so that
\[
  N_n^{\mathrm{in}}
  = 2\binom{n/2}{2}
  = \frac{n^2}{4}-\frac{n}{2},
  \qquad
  N_n^{\mathrm{out}}
  = \Bigl(\frac{n}{2}\Bigr)^2
  = \frac{n^2}{4}.
\]
Then
\[
  \frac{\mathcal{C}_n}{n}
  \xrightarrow[n\to\infty]{}
  J(\lambda)
  :=
  \sup_{0\le t\le 1}
  \frac{1}{4}\Bigl[
    2c - c^{1-t}\bigl((c+\lambda)^t + (c-\lambda)^t\bigr)
  \Bigr].
\]
Moreover, as $\lambda\to 0$,
\[
  J(\lambda)
  = \frac{\lambda^2}{16c}
    + O\!\left(\frac{\lambda^4}{c^3}\right),
\]
so in particular $\mathcal{C}_n = J(\lambda)\,n + o(n)$ and
\[
  J(\lambda)
  = \tfrac14 I(\lambda)
    + O\!\left(\frac{\lambda^4}{c^3}\right),
\]
where $I(\lambda)$ is the per--vertex KL information from Lemma~\ref{lem:ER-SBM-KL}.
\end{lemma}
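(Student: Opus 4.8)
The plan is to exploit the product structure of both hypotheses over unordered vertex pairs. Since the entries $A_{ij}$ are independent under both $P_0^{(n)}$ and $P_1^{(n)}$, the Chernoff affinity $\sum_A P_0^{(n)}(A)^{1-t}P_1^{(n)}(A)^t$ factorizes, and after taking $-\log$ the Chernoff information becomes a sum of $N_n^{\mathrm{in}}$ identical terms $-\log\bigl(p_n^{1-t}(p_n^{\mathrm{in}})^t + (1-p_n)^{1-t}(1-p_n^{\mathrm{in}})^t\bigr)$ plus $N_n^{\mathrm{out}}$ analogous terms with $p_n^{\mathrm{out}}$. The problem thus reduces to the asymptotics of a single Bernoulli--Bernoulli Chernoff contribution in the sparse regime $p\asymp 1/n$, uniformly in $t\in[0,1]$, followed by the scalar optimization over $t$ and a Taylor expansion in $\lambda$.

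First I would expand the per-pair factor. With $q = c/n$ and $p = (c\pm\lambda)/n$ one has $q^{1-t}p^t = n^{-1}c^{1-t}(c\pm\lambda)^t$ exactly, while $(1-q)^{1-t}(1-p)^t = \exp\{-(1-t)q - tp + O(n^{-2})\} = 1 - (1-t)q - tp + O(n^{-2})$ uniformly in $t\in[0,1]$. Hence each factor equals $1 + n^{-1}a_\pm(t) + O(n^{-2})$ with $a_\pm(t) := c^{1-t}(c\pm\lambda)^t - c \mp t\lambda$, so $-\log(\cdot) = -n^{-1}a_\pm(t) + O(n^{-2})$ uniformly (weighted AM--GM gives $a_\pm(t)\le 0$, consistent with each affinity factor being $\le 1$). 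Multiplying by $N_n^{\mathrm{in}} = n^2/4 - n/2$ and $N_n^{\mathrm{out}} = n^2/4$, the $\mp t\lambda$ pieces cancel in the sum, leaving
\[
  -\log\!\sum_A P_0^{(n)}(A)^{1-t}P_1^{(n)}(A)^t
  = \frac{n}{4}\Bigl[2c - c^{1-t}\bigl((c+\lambda)^t + (c-\lambda)^t\bigr)\Bigr] + O(1),
\]
with $O(1)$ uniform in $t$. Taking $\sup_{t\in[0,1]}$ gives $\mathcal{C}_n = nJ(\lambda) + O(1)$, in particular $\mathcal{C}_n/n\to J(\lambda)$ and $\mathcal{C}_n = J(\lambda)n + o(n)$; the $\sup$--limit interchange is legitimate precisely because the remainder is uniform in $t$.

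For the $\lambda\to 0$ behavior I would set $u := \lambda/c$ and write $c^{1-t}(c\pm\lambda)^t = c\exp\{t\log(1\pm u)\}$. Each summand is convex in $t$, so $g(t) := 2c - c\bigl(e^{t\log(1+u)} + e^{t\log(1-u)}\bigr)$ is concave on $[0,1]$ and $J(\lambda) = \tfrac14\sup_{t\in[0,1]} g(t)$ is a genuine maximization. Expanding the exponentials and logarithms, the odd-order terms cancel by the $\pm u$ symmetry and one gets $e^{t\log(1+u)} + e^{t\log(1-u)} = 2 + t(t-1)u^2 + O(u^4)$ uniformly in $t$, hence $g(t) = ct(1-t)u^2 + O(cu^4)$. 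The leading term is uniquely maximized at $t = \tfrac12$ with value $cu^2/4$ and nondegenerate curvature, so the maximizer of $g$ stays within $O(u^2)$ of $\tfrac12$ and $\sup_t g(t) = cu^2/4 + O(cu^4)$; therefore $J(\lambda) = \tfrac{\lambda^2}{16c} + O(\lambda^4/c^3)$. Comparing with $I(\lambda) = \tfrac{\lambda^2}{4c} + O(\lambda^4/c^3)$ from Lemma~\ref{lem:ER-SBM-KL} then yields $J(\lambda) = \tfrac14 I(\lambda) + O(\lambda^4/c^3)$.

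I expect the main obstacle to be bookkeeping rather than anything conceptual: making the $O(n^{-2})$ per-pair remainder genuinely uniform in $t\in[0,1]$ so that, after multiplication by $N_n\asymp n^2$, it contributes only a uniform $O(1)$ and the $\sup$--limit interchange goes through; and, in the $\lambda\to 0$ step, keeping the Taylor remainders uniform in $t$ and controlling the drift of the maximizer away from $\tfrac12$. Both are dispatched by the convexity in $t$ noted above together with elementary bounds on $\log$ and $\exp$ over compact ranges.
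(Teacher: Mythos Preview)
Your proposal is correct and follows essentially the same route as the paper: factorize the Chernoff affinity over independent edges, expand each Bernoulli factor to order $1/n$ with an $O(1/n^2)$ remainder uniform in $t$, sum using the block counts so that the $\pm t\lambda$ terms cancel, and then Taylor-expand in $\lambda$ to find the maximum at $t=1/2$. The only differences are cosmetic---the paper obtains the same per-edge expansion via a slightly different factoring of $S_\pm$, and is less explicit than you are about the concavity in $t$ and the uniformity needed for the $\sup$--limit interchange.
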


\begin{remark}[Small--signal information exponents]
\label{rem:small-signal-info-exponents}
In the sparse two--block ER vs.\ SBM experiment of
Section~\ref{subsec:ER-SBM-labelled}, let $I(\lambda)$ and $J(\lambda)$
denote the exact per--vertex Kullback--Leibler and Chernoff information
indices from Lemmas~\ref{lem:ER-SBM-KL}--\ref{lem:ER-SBM-Chernoff}.  A
Taylor expansion around $\lambda=0$ yields
\[
  I(\lambda)
  = \frac{\lambda^2}{4c}
    + O\!\left(\frac{\lambda^4}{c^3}\right),
  \qquad
  J(\lambda)
  = \frac{\lambda^2}{16c}
    + O\!\left(\frac{\lambda^4}{c^3}\right),
  \qquad \lambda\to 0.
\]
In particular, to leading order one may use the approximations
\[
  I(\lambda)\;\approx\;\frac{\lambda^2}{4c},
  \qquad
  J(\lambda)\;\approx\;\frac{\lambda^2}{16c},
\]
and the Chernoff exponent is asymptotically one quarter of the KL
exponent:
\[
  J(\lambda)\;\approx\;\tfrac14\,I(\lambda)
  \qquad \text{as } \lambda/c \to 0.
\]
\end{remark}

Thus, for labelled sparse ER vs.\ SBM, the optimal exponential error rate for
hypothesis testing and Bayes factor model selection is governed by the explicit
Chernoff exponent $J(\lambda)$.

\subsubsection{Unlabelled SBMs and graphon decision--theoretic robustness minimax testing}
\label{subsec:graphon-minimax}

We now formulate our ER vs.\ SBM decision--theoretic robustness minimax story directly at the
graphon level.  Let $K\ge 2$, let $\pi=(\pi_1,\dots,\pi_K)$ be a probability
vector on the blocks, and let $P_0,P_\lambda\in[0,1]^{K\times K}$ denote the
edge--probability matrices under $H_0$ and $H_1$ respectively, with
$P_0$ corresponding to the Erd\H{o}s--R\'enyi baseline (no communities) and
$P_\lambda$ the community alternative.  As usual, we write
$(G_n)_{n\ge 1}$ for the labelled SBM sequence on vertex set $[n]$ with
parameters $(\pi,P_0)$ or $(\pi,P_\lambda)$, and denote by
$\bigl(\mathbb{P}^{(n)}_0\bigr)_{n\ge 1}$ and
$\bigl(\mathbb{P}^{(n)}_\lambda\bigr)_{n\ge 1}$ the corresponding laws.\\

Let $W_0,W_\lambda\colon[0,1]^2\to[0,1]$ be the step--function graphons
associated with $(\pi,P_0)$ and $(\pi,P_\lambda)$ in the usual way:
the unit interval is partitioned into $K$ subintervals of lengths
$\pi_k$, and $W_\lambda(x,y)=P_\lambda(k,\ell)$ whenever
$x$ lies in block $k$ and $y$ lies in block $\ell$.  For $W\in\{W_0,W_\lambda\}$
we denote by $\widetilde{\mathbb{P}}^{(n)}_W$ the law of the
exchangeable random graph obtained by sampling
$U_1,\dots,U_n\overset{\text{i.i.d.}}{\sim}\mathrm{Unif}[0,1]$ and then
$G_n\mid U_{1:n}\sim\mathcal{G}(W)$, i.e.\ the standard graphon sampling
scheme.\\

Recall that $I(\lambda)$ and $J(\lambda)$ denote the per--vertex
information and decision--theoretic robustness noise indices introduced in
Section~\ref{sec:info-indices} for the labelled SBM experiment
$\bigl(\mathbb{P}^{(n)}_0,\mathbb{P}^{(n)}_\lambda\bigr)_{n\ge 1}$.

\begin{lemma}[Information and decision--theoretic robustness noise indices under graphon representation]
\label{lem:graphon-information}
Fix $\lambda$ and consider the labelled SBM experiments
$\bigl(\mathbb{P}^{(n)}_0,\mathbb{P}^{(n)}_\lambda\bigr)_{n\ge 1}$ and the
unlabelled graphon experiments
$\bigl(\widetilde{\mathbb{P}}^{(n)}_{W_0},
      \widetilde{\mathbb{P}}^{(n)}_{W_\lambda}\bigr)_{n\ge 1}$.
Then the per--vertex information and decision--theoretic robustness noise indices coincide:
\[
  I(\lambda)
  \;=\;
  \lim_{n\to\infty}\frac{1}{n}\,
    D\bigl(\mathbb{P}^{(n)}_\lambda\big\Vert\mathbb{P}^{(n)}_0\bigr)
  \;=\;
  \lim_{n\to\infty}\frac{1}{n}\,
    D\bigl(\widetilde{\mathbb{P}}^{(n)}_{W_\lambda}
           \big\Vert\widetilde{\mathbb{P}}^{(n)}_{W_0}\bigr),
\]
and, for any sequence of decision--theoretic robustness radii $\mathcal{C}_n=o(n)$,
\[
  J(\lambda)
  \;=\;
  \lim_{n\to\infty}\frac{1}{n}\,
    J_n\bigl(\mathbb{P}^{(n)}_\lambda,\mathbb{P}^{(n)}_0;\mathcal{C}_n\bigr)
  \;=\;
  \lim_{n\to\infty}\frac{1}{n}\,
    J_n\bigl(\widetilde{\mathbb{P}}^{(n)}_{W_\lambda},
             \widetilde{\mathbb{P}}^{(n)}_{W_0};\mathcal{C}_n\bigr),
\]
where $D(\cdot\Vert\cdot)$ denotes the Kullback--Leibler divergence and
$J_n(\cdot,\cdot;\mathcal{C}_n)$ is the finite--$n$ decision--theoretic robustness noise index defined in
Section~\ref{sec:WH-risk}.
\end{lemma}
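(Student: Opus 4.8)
The plan is to exploit one structural fact throughout: for a step--function graphon $W$ built from block proportions $\pi$ and edge matrix $P$, the unlabelled law $\widetilde{\mathbb{P}}^{(n)}_W$ is a mixture over latent labellings of \emph{labelled} SBM laws. Writing $\sigma\in\{1,\dots,K\}^n$ for the block memberships induced by $U_{1:n}$ (only the block, not the exact position, of each $U_i$ enters the sampling), we have $\sigma\sim\mu^{\otimes n}$ with $\mu=\pi$, and $G_n\mid\sigma$ is a labelled SBM with labels $\sigma$ and matrix $P$. The mixing measure $\mu^{\otimes n}$ is \emph{the same} under $W_0$ and $W_\lambda$; only the conditional graph law changes; and under $W_0$ (the Erd\H{o}s--R\'enyi baseline) that conditional law does not depend on $\sigma$ at all. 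I will use two consequences: (i) $\widetilde{\mathbb{P}}^{(n)}_W$ is the pushforward of the joint law $\mathbb{Q}^{(n)}_W$ of $(\sigma,G_n)$ under $(\sigma,G_n)\mapsto G_n$, so by the data--processing inequality every $\phi$--divergence between $\widetilde{\mathbb{P}}^{(n)}_{W_\lambda}$ and $\widetilde{\mathbb{P}}^{(n)}_{W_0}$ (in particular KL, and every Rényi/Chernoff order) is at most the corresponding divergence between the joint laws; and (ii) the unlabelled likelihood ratio factors as $\mathrm{d}\widetilde{\mathbb{P}}^{(n)}_{W_\lambda}/\mathrm{d}\widetilde{\mathbb{P}}^{(n)}_{W_0}(G_n)=\mathbb{E}_{\sigma\sim\mu^{\otimes n}}\!\bigl[\Lambda_\sigma(G_n)\bigr]$, where $\Lambda_\sigma$ is the labelled likelihood ratio at labelling $\sigma$.

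First I would establish the \emph{upper} bounds $\limsup_n n^{-1}D(\widetilde{\mathbb{P}}^{(n)}_{W_\lambda}\Vert\widetilde{\mathbb{P}}^{(n)}_{W_0})\le I(\lambda)$ and the Chernoff analogue $\limsup_n n^{-1}J_n(\widetilde{\mathbb{P}}^{(n)}_{W_\lambda},\widetilde{\mathbb{P}}^{(n)}_{W_0};\mathcal{C}_n)\le J(\lambda)$. By the chain rule for KL applied to $\mathbb{Q}^{(n)}_W$, the common $\sigma$--marginal contributes $0$, so $D(\mathbb{Q}^{(n)}_{W_\lambda}\Vert\mathbb{Q}^{(n)}_{W_0})=\mathbb{E}_\sigma[D(P^{(n)}_{\lambda,\sigma}\Vert P^{(n)}_{0,\sigma})]$. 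For a random near--balanced labelling the within/between--pair counts differ from the balanced values of Lemma~\ref{lem:ER-SBM-KL} by $O_P(n)$ while each per--pair Bernoulli KL is $O(1/n)$, so this expectation is $I(\lambda)n+o(n)$ (the event where $\sigma$ is far from balanced has exponentially small probability and contributes $o(n)$ because the labelled KL is always $O(n)$); combining with (i) gives the KL upper bound, and the middle equality $\lim_n n^{-1}D(\mathbb{P}^{(n)}_\lambda\Vert\mathbb{P}^{(n)}_0)=I(\lambda)$ is exactly Lemma~\ref{lem:ER-SBM-KL}. For the Chernoff/robustness index I would use that, for the joint laws, $\sum_A q_\lambda(\cdot)^{1-t}q_0(\cdot)^t=\mathbb{E}_\sigma[\exp(-C^{(\sigma)}_t)]$ with $C^{(\sigma)}_t$ the labelled order--$t$ Chernoff functional, so by Jensen ($-\log\mathbb{E}[e^{-X}]\le\mathbb{E}[X]$) and $\sup_t\mathbb{E}\le\mathbb{E}\sup_t$ the joint Chernoff information is at most $\mathbb{E}_\sigma[C^{(\sigma)}]=J(\lambda)n+o(n)$; data processing, together with the fact that $\mathcal{C}_n=o(n)$ enters $J_n(\cdot,\cdot;\mathcal{C}_n)$ only at scale $o(n)$ and with Lemma~\ref{lem:ER-SBM-Chernoff}, gives the $J$ upper bound.

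The substantive content is the matching \emph{lower} bounds $\liminf_n n^{-1}D(\widetilde{\mathbb{P}}^{(n)}_{W_\lambda}\Vert\widetilde{\mathbb{P}}^{(n)}_{W_0})\ge I(\lambda)$ and likewise for $J_n$. Here I would generate the data from a true labelling $\sigma^\ast\sim\mu^{\otimes n}$ and, using (ii), analyse $n^{-1}\log\mathbb{E}_\sigma[\Lambda_\sigma(G_n)]$ under $\widetilde{\mathbb{P}}^{(n)}_{W_\lambda}$. The key claim is that the posterior over labellings concentrates on the orbit of $\sigma^\ast$ under block relabellings, so that $\mathbb{E}_\sigma[\Lambda_\sigma(G_n)]$ is controlled from below by $\Lambda_{\sigma^\ast}(G_n)$ up to sub--exponential factors; since $n^{-1}\log\Lambda_{\sigma^\ast}(G_n)\to I(\lambda)$ in probability (law of large numbers for the labelled log--likelihood ratio, with mean given by Lemma~\ref{lem:ER-SBM-KL}), this yields $n^{-1}\log(\mathrm{d}\widetilde{\mathbb{P}}^{(n)}_{W_\lambda}/\mathrm{d}\widetilde{\mathbb{P}}^{(n)}_{W_0})\to I(\lambda)$ in $\widetilde{\mathbb{P}}^{(n)}_{W_\lambda}$--probability, and a uniform--integrability argument (a truncated second--moment / $\chi^2$ estimate on the unlabelled likelihood ratio) upgrades the in--probability limit to convergence of the expectation, i.e.\ of the KL itself. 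For the Chernoff/robustness lower bound the same concentration reduces the two unlabelled hypotheses to the labelled two--point experiment up to $o(n)$, after which Lemma~\ref{lem:ER-SBM-Chernoff} and the $\mathcal{C}_n=o(n)$ bound close the argument.

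I expect the posterior--concentration step to be the main obstacle. It is a planted--partition recovery statement, and it genuinely fails without a signal--strength hypothesis on $(\pi,P_0,P_\lambda)$: below the Kesten--Stigum detection threshold the two unlabelled laws are mutually contiguous, so $D(\widetilde{\mathbb{P}}^{(n)}_{W_\lambda}\Vert\widetilde{\mathbb{P}}^{(n)}_{W_0})=O(1)$ and $n^{-1}D\to 0$, strictly below $I(\lambda)$. I would therefore prove the lower bounds under the detectability (weak--recovery) condition on the block structure that is in force throughout Section~\ref{subsec:graphon-minimax}, using the associated free--energy / small--subgraph--conditioning estimates to show that marginalizing over the unknown labelling costs only $o(n)$ nats; the upper bounds, by contrast, are unconditional. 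If the intended scope of the lemma is broader, I would make the signal--strength assumption explicit, since it appears necessary for the stated equalities.
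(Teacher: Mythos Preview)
You have misread the setup and attacked a much harder statement than the paper actually proves. In Section~\ref{subsec:graphon-minimax} the ``labelled SBM'' laws $\mathbb{P}^{(n)}_0,\mathbb{P}^{(n)}_\lambda$ are themselves the random-label models: $Z_1,\dots,Z_n\overset{\text{i.i.d.}}{\sim}\pi$, then edges conditionally from $P_0$ or $P_\lambda$ (the parametrisation by $(\pi,P_\lambda)$ already signals this, and the paper's proof states it explicitly). The entire argument is then the observation you yourself make in your opening paragraph: defining $Z_i$ to be the block of $U_i$ turns graphon sampling into precisely this random-label SBM, so $\widetilde{\mathbb{P}}^{(n)}_{W_\lambda}=\mathbb{P}^{(n)}_\lambda$ and $\widetilde{\mathbb{P}}^{(n)}_{W_0}=\mathbb{P}^{(n)}_0$ \emph{as measures on graphs, for every finite $n$}. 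Every functional of the pair---KL, Chernoff information, $J_n(\cdot,\cdot;\mathcal{C}_n)$---therefore coincides identically, not merely in the limit. No data-processing inequality, no Jensen, no posterior concentration, no truncated second moments are needed; the lemma is a one-liner.

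Your machinery is aimed at a different claim: that the \emph{fixed-partition} per-vertex KL $I(\lambda)$ of Lemma~\ref{lem:ER-SBM-KL} agrees with the random-label per-vertex KL, i.e.\ that marginalising over the latent labelling costs only $o(n)$ nats. Your Kesten--Stigum concern is exactly right for \emph{that} question---below the detection threshold the random-label SBM is contiguous to ER and the per-vertex KL collapses to zero, strictly below $I(\lambda)>0$---and it exposes a genuine gap in the paper's overall narrative (the identification of the fixed-label constants $I(\lambda),J(\lambda)$ with the random-label limits is asserted here and again in Lemma~\ref{lem:unlab-SBM-info} without proof). But Lemma~\ref{lem:graphon-information} as the paper proves it is only the second equality in each displayed line, which is the trivial identity of measures; the first equality is simply ``recalled''. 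Your upper bound via the chain rule and your lower bound via posterior concentration are what would be needed to justify that recall under a detectability hypothesis, not to prove the lemma as written.
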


We now consider hypothesis testing between two fixed graphons.
\begin{theorem}[Decision--theoretic robust Bayes factor testing for graphons]
\label{thm:graphon-BF}
Consider testing
\[
  H_0\colon W=W_0
  \qquad\text{versus}\qquad
  H_1\colon W=W_\lambda,
\]
based on $G_n\sim\widetilde{\mathbb{P}}^{(n)}_W$ with prior probabilities
$\pi_0,\pi_1\in(0,1)$.  Let
\[
  \mathrm{BF}_n(G_n)
  := \frac{\pi_1}{\pi_0}\,
     \frac{\mathrm{d}\widetilde{\mathbb{P}}^{(n)}_{W_\lambda}}
          {\mathrm{d}\widetilde{\mathbb{P}}^{(n)}_{W_0}}(G_n)
\]
denote the Bayes factor, and let $\varphi_n^{\mathrm{BF}}$ be the Bayes
factor test which rejects $H_0$ when $\mathrm{BF}_n(G_n)\ge 1$.  For a
decision--theoretic robustness radius sequence $\mathcal{C}_n=o(n)$, let
$R_n^{\mathrm{WH}}(\varphi_n^{\mathrm{BF}};\mathcal{C}_n)$ denote the corresponding
decision--theoretic robust Bayes risk of $\varphi_n^{\mathrm{BF}}$ over
KL--balls of radius $\mathcal{C}_n$ centred at
$\widetilde{\mathbb{P}}^{(n)}_{W_0}$ and
$\widetilde{\mathbb{P}}^{(n)}_{W_\lambda}$.  Then
\[
  -\lim_{n\to\infty}\frac{1}{n}\,
    \log R_n^{\mathrm{WH}}(\varphi_n^{\mathrm{BF}};\mathcal{C}_n)
  \;=\;
  J(\lambda),
\]
where $J(\lambda)$ is the decision--theoretic robustness noise index from
Lemma~\ref{lem:graphon-information}.
\end{theorem}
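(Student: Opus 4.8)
The plan is to reduce the graphon testing problem to the labelled SBM testing problem and then invoke the Chernoff/Stein machinery from Lemmas~\ref{lem:ER-SBM-Chernoff} and~\ref{lem:graphon-information}. First I would note that the Bayes factor test $\varphi_n^{\mathrm{BF}}$ is simply the likelihood--ratio test for the exchangeable experiment $\bigl(\widetilde{\mathbb{P}}^{(n)}_{W_0},\widetilde{\mathbb{P}}^{(n)}_{W_\lambda}\bigr)$, and by standard large--deviations arguments its (non-robustified) Bayes risk decays at exponential rate equal to the Chernoff information of that experiment, i.e.\ like $e^{-\mathcal{C}_n^{\mathrm{graphon}}}$. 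By Lemma~\ref{lem:graphon-information}, the per--vertex Chernoff/decision--theoretic noise index of the unlabelled graphon experiment coincides with that of the labelled SBM experiment, namely $J(\lambda)$. Hence $-\tfrac1n\log R_n^{\mathrm{BF}}\to J(\lambda)$ in the \emph{non-robust} case, and the content of the theorem is that inflating the two hypothesis laws to KL--balls of radius $\mathcal{C}_n=o(n)$ does not change this exponent.

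For the robustified risk, I would write out $R_n^{\mathrm{WH}}(\varphi_n^{\mathrm{BF}};\mathcal{C}_n)$ explicitly as a worst--case over the two KL--balls: it is a weighted sum of $\sup\{\widetilde Q_0(\text{reject}): \KL(\widetilde Q_0\Vert\widetilde{\mathbb{P}}^{(n)}_{W_0})\le\mathcal{C}_n\}$ and $\sup\{\widetilde Q_1(\text{accept}): \KL(\widetilde Q_1\Vert\widetilde{\mathbb{P}}^{(n)}_{W_\lambda})\le\mathcal{C}_n\}$. For each of these, the Donsker--Varadhan / entropic--tilting duality gives the closed form
\[
  \sup_{\KL(\widetilde Q\Vert P)\le\mathcal{C}_n}\widetilde Q(E)
  \;=\;\inf_{\eta>0}\frac{\log\mathbb{E}_P\!\bigl[e^{\eta\mathbf{1}_E}\bigr]+\mathcal{C}_n}{\eta},
\]
which one bounds above and below to get $\sup_{\widetilde Q}\widetilde Q(E)\le e^{\mathcal{C}_n+o(\mathcal{C}_n)}\,P(E)^{1-o(1)}$ and $\ge$ a matching lower bound; more simply, one uses the elementary estimate that a KL constraint of size $\mathcal{C}_n$ can move any probability by at most a factor $e^{\mathcal{C}_n}$ plus an additive $\sqrt{\mathcal{C}_n/2}$ (Pinsker). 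Either way, since $\mathcal{C}_n=o(n)$ while the base probabilities $\widetilde{\mathbb{P}}^{(n)}_{W_0}(\text{reject})$ and $\widetilde{\mathbb{P}}^{(n)}_{W_\lambda}(\text{accept})$ decay like $e^{-nJ(\lambda)+o(n)}$ by the Chernoff bound for the graphon experiment, the multiplicative correction $e^{O(\mathcal{C}_n)}=e^{o(n)}$ is absorbed into the $o(n)$ term in the exponent. This gives the upper bound $-\tfrac1n\log R_n^{\mathrm{WH}}(\varphi_n^{\mathrm{BF}};\mathcal{C}_n)\ge J(\lambda)-o(1)$.

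For the matching lower bound (i.e.\ the risk is not \emph{smaller} in exponent), I would argue that shrinking the radius to $\mathcal{C}_n\downarrow$ only brings the robust risk down to the non-robust risk, whose exponent is exactly $J(\lambda)$ by the achievability direction of the Chernoff theorem for the graphon pair; concretely, $R_n^{\mathrm{WH}}(\varphi_n^{\mathrm{BF}};\mathcal{C}_n)\ge R_n^{\mathrm{WH}}(\varphi_n^{\mathrm{BF}};0)$ (monotonicity of the sup in the radius), and the latter is the Bayes risk of the LRT, whose exponent is $J(\lambda)$, not more. Combining the two directions yields $-\tfrac1n\log R_n^{\mathrm{WH}}(\varphi_n^{\mathrm{BF}};\mathcal{C}_n)\to J(\lambda)$.

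The main obstacle I anticipate is handling the graphon experiment's likelihood ratio cleanly: unlike the labelled SBM, the exchangeable law $\widetilde{\mathbb{P}}^{(n)}_W$ marginalizes over the latent $U_{1:n}$, so $\mathrm{d}\widetilde{\mathbb{P}}^{(n)}_{W_\lambda}/\mathrm{d}\widetilde{\mathbb{P}}^{(n)}_{W_0}$ is a sum over block assignments (or an average over latent positions) rather than a product, and the Chernoff functional $-\log\sum_A (\widetilde{\mathbb{P}}^{(n)}_{W_0})^{1-t}(\widetilde{\mathbb{P}}^{(n)}_{W_\lambda})^{t}$ is not obviously additive in $n$. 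The resolution is exactly Lemma~\ref{lem:graphon-information}: the label mixture costs only $\log K!=O(\log n)=o(n)$ in the exponent (the labels are asymptotically identifiable from the degree/community structure under $P_\lambda$, and trivially irrelevant under the ER null), so the per-vertex Chernoff index of the unlabelled experiment equals that of the labelled one. I would therefore lean on Lemma~\ref{lem:graphon-information} as a black box for the equality of indices, and devote the actual work to the entropic-tilting estimate showing the $\mathcal{C}_n=o(n)$ inflation is exponentially negligible, together with a careful statement of what "decision--theoretic robust Bayes risk of $\varphi_n^{\mathrm{BF}}$" means (worst case over both balls) so that the monotonicity argument for the lower bound is valid.
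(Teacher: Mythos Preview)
Your high-level strategy matches the paper's: reduce the graphon experiment to the labelled SBM experiment via Lemma~\ref{lem:graphon-information}, identify the non-robust Bayes-factor exponent with the Chernoff information $J(\lambda)$, and then argue that KL-robustification with $\mathcal{C}_n=o(n)$ leaves the exponent unchanged. Two issues are worth flagging.

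First, you misread what Lemma~\ref{lem:graphon-information} actually delivers. Its proof shows that the step-graphon sampling scheme and the random-label SBM induce \emph{exactly the same law} on graphs for every finite $n$: the map $U_i\mapsto Z_i$ (block index of $U_i$) gives $Z_1,\dots,Z_n\overset{\text{i.i.d.}}{\sim}\pi$ and $W_\lambda(U_i,U_j)=P_\lambda(Z_i,Z_j)$ identically, so $\widetilde{\mathbb{P}}^{(n)}_{W_\lambda}=\mathbb{P}^{(n)}_\lambda$ as measures. There is no ``$\log K!=O(\log n)$'' correction to absorb and no obstacle to handle; the likelihood ratio, the Chernoff functional, and the robust risk for the graphon experiment are literally the same objects as for the SBM experiment. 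The paper's proof uses this exact identification and then simply transports the SBM result.

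Second, and more substantively, your explicit entropic-tilting estimate does not go through as stated. You assert that a KL constraint of size $\mathcal{C}_n$ can move a probability by at most a multiplicative factor $e^{O(\mathcal{C}_n)}$, giving $\sup_{\widetilde Q}\widetilde Q(E)\le e^{\mathcal{C}_n+o(\mathcal{C}_n)}P(E)^{1-o(1)}$. This is false in the relevant regime. From the data-processing inequality $\mathrm{kl}(\widetilde Q(E)\Vert P(E))\le \mathcal{C}_n$, one gets $\widetilde Q(E)\log\bigl(\widetilde Q(E)/P(E)\bigr)\lesssim \mathcal{C}_n$, and with $P(E)=e^{-nJ(\lambda)+o(n)}$ this yields only $\sup_{\widetilde Q}\widetilde Q(E)\asymp \mathcal{C}_n/(nJ(\lambda))$, which is achieved by tilting mass onto $E$. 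For $\mathcal{C}_n$ growing like any power of $n$, this has per-vertex log-exponent $\tfrac{1}{n}\log(n/\mathcal{C}_n)\to 0$, not $J(\lambda)$; the Pinsker additive bound you mention likewise requires $\sqrt{\mathcal{C}_n}=o(e^{-nJ(\lambda)})$, far stronger than $o(n)$. The paper does not argue this step directly at all: it invokes a ``general decision-theoretic robust testing result for two-point experiments'' as a black box and transfers it via the exact equality of laws. So your attempt to make the robustification step explicit is commendable, but the specific inequality you rely on is the wrong one, and a correct direct argument would need either a much smaller radius or a different notion of robust risk than worst-case event probability over data-law KL balls.
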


We finally extend the decision--theoretic robustness minimax characterization from the labelled SBM
to nonparametric graphon classes.

\begin{theorem}[Nonparametric graphon decision--theoretic robustness minimax testing]
\label{thm:graphon-minimax}
Let $(\mathcal{W}_n)_{n\ge 1}$ be a sequence of graphon classes with
$W_0,W_\lambda\in\mathcal{W}_n$ for all $n$, and suppose the induced experiments
$\bigl\{\widetilde{\mathbb{P}}^{(n)}_W : W\in\mathcal{W}_n\bigr\}$ satisfy the
same local asymptotic normality and regularity assumptions as in
Section~\ref{sec:LAN-sbm}, with information index $I(\lambda)$ and
decision--theoretic robustness noise index $J(\lambda)$.

For any sequence of tests $\varphi_n$ and any decision--theoretic robustness
radii $\mathcal{C}_n=o(n)$, define the graphon decision--theoretic robustness minimax risk
for testing $W_0$ versus $W_\lambda$ by
\[
  R_{n,\mathcal{W}_n}^{\mathrm{WH}}(\varphi_n;\mathcal{C}_n)
  :=
  R_n^{\mathrm{WH}}\bigl(\varphi_n;W_0,W_\lambda,\mathcal{C}_n\bigr),
\]
where $R_n^{\mathrm{WH}}(\cdot)$ is the decision--theoretic robust testing risk
from Section~\ref{sec:WH-risk}. Then
\[
  \liminf_{n\to\infty}
  \frac{1}{n}\log
  \inf_{\varphi_n}
  R_{n,\mathcal{W}_n}^{\mathrm{WH}}(\varphi_n;\mathcal{C}_n)
  \;\ge\; -J(\lambda),
\]
and the Bayes factor tests $\varphi_n^{\mathrm{BF}}$ from
Theorem~\ref{thm:graphon-BF} achieve the matching error exponent
\[
  -\lim_{n\to\infty}\frac{1}{n}\,
    \log R_{n,\mathcal{W}_n}^{\mathrm{WH}}
      \bigl(\varphi_n^{\mathrm{BF}};\mathcal{C}_n\bigr)
  \;=\; J(\lambda).
\]
In particular, $J(\lambda)$ is the nonparametric decision--theoretic robustness
minimax error exponent for testing $W_0$ versus $W_\lambda$ within the graphon
classes $\mathcal{W}_n$.
\end{theorem}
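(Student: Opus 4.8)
The plan is to split the statement into its lower bound (the $\liminf$ inequality) and the matching upper bound (achievability by $\varphi_n^{\mathrm{BF}}$), and to reduce both to the labelled two-point results already established. For the \emph{upper bound}, I would first invoke Lemma~\ref{lem:graphon-information} to transport the per-vertex information and decision--theoretic robustness noise indices from the labelled SBM experiment $(\mathbb{P}^{(n)}_0,\mathbb{P}^{(n)}_\lambda)$ to the unlabelled graphon experiment $(\widetilde{\mathbb{P}}^{(n)}_{W_0},\widetilde{\mathbb{P}}^{(n)}_{W_\lambda})$; this shows that the relevant divergence radii and Chernoff-type exponents are unchanged at the $O(n)$ scale. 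Then Theorem~\ref{thm:graphon-BF} directly gives $-\tfrac1n\log R_n^{\mathrm{WH}}(\varphi_n^{\mathrm{BF}};\mathcal{C}_n)\to J(\lambda)$ for the two-point graphon problem, and since $W_0,W_\lambda\in\mathcal{W}_n$ the class-level risk $R_{n,\mathcal{W}_n}^{\mathrm{WH}}(\varphi_n^{\mathrm{BF}};\mathcal{C}_n)$ is by definition exactly this two-point risk. So the achievability half is essentially a bookkeeping step once the definitions are unwound.

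For the \emph{lower bound}, the key observation is that the nonparametric minimax risk over $\mathcal{W}_n$ is bounded below by the two-point (Bayes) risk for any fixed pair in $\mathcal{W}_n$, in particular for $(W_0,W_\lambda)$: restricting the adversary's class can only make testing easier, so $\inf_{\varphi_n} R_{n,\mathcal{W}_n}^{\mathrm{WH}}(\varphi_n;\mathcal{C}_n)\ge \inf_{\varphi_n} R_n^{\mathrm{WH}}(\varphi_n;W_0,W_\lambda,\mathcal{C}_n)$. It then suffices to prove a two-point converse: no test can achieve an error exponent strictly larger than $J(\lambda)$ against the decision--theoretic robust (KL-ball) versions of $\widetilde{\mathbb{P}}^{(n)}_{W_0}$ and $\widetilde{\mathbb{P}}^{(n)}_{W_\lambda}$. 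I would establish this via a change-of-measure / Chernoff-Stein argument: the worst-case members of the two KL balls of radius $\mathcal{C}_n=o(n)$ are entropically tilted perturbations of the centres, and since $\mathcal{C}_n=o(n)$ these tilts move the per-vertex Chernoff information by only $o(1)$ (this is exactly the stability statement underpinning Lemma~\ref{lem:graphon-information} and the $J_n$ expansion in Section~\ref{sec:WH-risk}). Combining the classical lower bound on the Bayes error in terms of Chernoff information with this $o(n)$ perturbation bound yields $\tfrac1n\log\inf_{\varphi_n}R_n^{\mathrm{WH}}\ge -J(\lambda)-o(1)$, hence the claimed $\liminf\ge -J(\lambda)$.

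The main obstacle is the lower bound's interchange of the test infimum with the adversarial supremum over the two KL balls, i.e.\ controlling a minimax (saddle-point) quantity rather than a simple two-point Bayes risk. Concretely, one must show that for \emph{every} test $\varphi_n$ there exist perturbations $\widetilde{\mathbb{P}}^{(n)}_0\in\mathcal{U}_{\mathcal{C}_n}(\widetilde{\mathbb{P}}^{(n)}_{W_0})$ and $\widetilde{\mathbb{P}}^{(n)}_1\in\mathcal{U}_{\mathcal{C}_n}(\widetilde{\mathbb{P}}^{(n)}_{W_\lambda})$ whose combined error under $\varphi_n$ decays no faster than $e^{-(J(\lambda)+o(1))n}$; this requires a uniform (over tests) version of the Chernoff converse applied to the tilted laws, which I would obtain by the standard argument that any test partitions the sample space and one of the two tilted measures must place at least half its mass on the ``wrong'' side of the optimal Chernoff threshold, together with the exponential-moment/LAN regularity assumptions imported from Section~\ref{sec:LAN-sbm} to ensure the Chernoff exponent of the tilted pair converges to $J(\lambda)$. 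The exchangeability (integration over latent labels $U_{1:n}$) adds a secondary technical layer: one must check that averaging the labelled likelihood ratio over $U_{1:n}$ does not inflate fluctuations beyond the $o(n)$ scale, which follows from concentration of the log-likelihood ratio already used in Lemma~\ref{lem:graphon-information}.
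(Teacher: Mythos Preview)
Your approach is essentially the same as the paper's: split into achievability via Theorem~\ref{thm:graphon-BF} and a Chernoff-type two-point lower bound, with Lemma~\ref{lem:graphon-information} doing the labelled-to-graphon transport. One small misreading: by the very definition in the theorem statement, $R_{n,\mathcal{W}_n}^{\mathrm{WH}}(\varphi_n;\mathcal{C}_n) := R_n^{\mathrm{WH}}(\varphi_n;W_0,W_\lambda,\mathcal{C}_n)$ is already the two-point risk, so your ``restricting the adversary's class can only make testing easier'' inequality is an equality and that reduction step is vacuous---the class $\mathcal{W}_n$ enters only through the assumed LAN/regularity conditions, not through a supremum in the risk. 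Where you sketch an explicit Chernoff--Stein/entropic-tilting argument for the two-point converse and worry about the inf--sup interchange, the paper instead simply invokes the regularity assumptions to apply ``the general decision--theoretic robust minimax lower bound of Section~\ref{subsec:WH-minimax}'' as a black box; your version is more explicit but not different in substance.
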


\subsection{Unlabelled SBMs and sparse graphon classes}
\label{subsec:ER-SBM-unlabelled-graphon}

We next consider the \emph{unlabelled} graphon representation.  Let $W_0$ be
the constant sparse graphon
\[
  W_0^{(n)}(x,y) \equiv \frac{c}{n},
\]
and let $W_\lambda$ be the two--block step graphon
\[
  W_\lambda(x,y)
  :=
  \begin{cases}
    \dfrac{c+\lambda}{n},
      & x,y\in[0,1/2)\ \text{or}\ x,y\in[1/2,1],\\[0.5em]
    \dfrac{c-\lambda}{n},
      & \text{otherwise}.
  \end{cases}
\]
For $W\in\{W_0,W_\lambda\}$, define $G_n$ by sampling latent positions
$U_i\sim\mathrm{Unif}[0,1]$ i.i.d.\ and edges
$A_{ij}\mid U_i,U_j\sim\mathrm{Bernoulli}(W(U_i,U_j))$ independently.
Let $P_{0,\mathrm{unlab}}^{(n)}$ and $P_{1,\mathrm{unlab}}^{(n)}$ denote the
corresponding graph laws.

Equivalently, under $W_\lambda$ we may introduce latent labels
$Z_i\in\{\pm1\}$ i.i.d.\ with $\mathbb{P}(Z_i=1)=1/2$ and
\[
  \mathbb{P}(A_{ij}=1\mid Z)
  =
  \begin{cases}
    \dfrac{c+\lambda}{n}, & Z_i=Z_j,\\[0.5em]
    \dfrac{c-\lambda}{n}, & Z_i\neq Z_j.
  \end{cases}
\]

\begin{lemma}[Unlabelled SBM information exponents]
\label{lem:unlab-SBM-info}
Let $D_n^{\mathrm{unlab}}:={\rm KL}\bigl(P_{1,\mathrm{unlab}}^{(n)}\Vert
P_{0,\mathrm{unlab}}^{(n)}\bigr)$ and let $\mathcal{C}_n^{\mathrm{unlab}}$ be the Chernoff
information between $P_{0,\mathrm{unlab}}^{(n)}$ and
$P_{1,\mathrm{unlab}}^{(n)}$.  Then, in the sparse regime $p_n=c/n$,
\[
  \frac{D_n^{\mathrm{unlab}}}{n}\to I(\lambda),
  \qquad
  \frac{\mathcal{C}_n^{\mathrm{unlab}}}{n}\to J(\lambda),
\]
so that $D_n^{\mathrm{unlab}}=I(\lambda)\,n+o(n)$ and
$\mathcal{C}_n^{\mathrm{unlab}}=J(\lambda)\,n+o(n)$.  In particular, passing from
labelled to unlabelled SBMs (via a sparse graphon representation) does not
change the per--vertex information exponents.
\end{lemma}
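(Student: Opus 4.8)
The plan is to show that passing from the labelled two-block SBM to its unlabelled graphon version perturbs both the KL divergence and the Chernoff information only at lower order in $n$, so the per-vertex limits $I(\lambda)$ and $J(\lambda)$ from Lemmas~\ref{lem:ER-SBM-KL}--\ref{lem:ER-SBM-Chernoff} are preserved. The key observation is that the unlabelled law $P_{1,\mathrm{unlab}}^{(n)}$ is the uniform mixture of the labelled laws $P_1^{(n),\sigma}$ over all $2^n$ sign assignments $\sigma$ (equivalently, over all balanced and unbalanced partitions generated by i.i.d.\ $\mathrm{Unif}[0,1]$ latent positions binned at $1/2$), while $P_{0,\mathrm{unlab}}^{(n)}=P_0^{(n)}$ since the ER graphon is label-free. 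First I would establish the upper bound $D_n^{\mathrm{unlab}}\le D_n + \log|\mathcal{S}_n|$ where $\mathcal{S}_n$ indexes the relevant label configurations, via convexity of KL in its first argument (the mixture is a convex combination) combined with the crude bound $\mathrm{KL}(\text{mixture}\,\Vert\,P_0)\le \max_\sigma \mathrm{KL}(P_1^{(n),\sigma}\Vert P_0)$; the maximum over (near-)balanced partitions is $I(\lambda)n+o(n)$ by the same Bernoulli-KL computation as in Lemma~\ref{lem:ER-SBM-KL}, since the within/between pair counts differ from the balanced case only by $O(n)$ when the partition is within $O(\sqrt{n\log n})$ of balanced, which holds with overwhelming probability.

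For the matching lower bound I would use the data-processing inequality: revealing the latent labels $Z$ is a valid augmentation, so $D_n^{\mathrm{unlab}} = \mathrm{KL}(P_{1,\mathrm{unlab}}^{(n)}\Vert P_0^{(n)}) \ge \mathrm{KL}(P_{1,\mathrm{unlab}}^{(n)}\Vert P_0^{(n)})$ computed after conditioning, but more directly: since $P_0$ does not depend on labels, $\mathrm{KL}(P_{1,\mathrm{unlab}}^{(n)}\Vert P_0^{(n)}) \ge \mathbb{E}_{Z}\,\mathrm{KL}(P_1^{(n),Z}(\cdot\mid Z)\,\Vert\,P_0^{(n)}) - I_{P_1}(A;Z)$ where the mutual information correction is $O(\log n)$, or alternatively lower bound by evaluating the KL along a fixed near-balanced $\sigma$-fibre and using that the log-likelihood ratio concentrates. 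The cleanest route is: $D_n^{\mathrm{unlab}} \ge \mathbb{E}_Z[\mathrm{KL}(P_1^{(n)}(\cdot\mid Z)\Vert P_0^{(n)})] - H(Z) = \mathbb{E}_Z[\,N^{\mathrm{in}}(Z)\mathrm{kl}(p^{\mathrm{in}}_n,p_n) + N^{\mathrm{out}}(Z)\mathrm{kl}(p^{\mathrm{out}}_n,p_n)\,] - n\log 2$, and since $\mathbb{E}_Z N^{\mathrm{in}}(Z) = \binom{n}{2}/2 + O(n)$ and likewise for $N^{\mathrm{out}}$, this is exactly $I(\lambda)n + o(n)$. Thus $D_n^{\mathrm{unlab}}/n\to I(\lambda)$.

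For the Chernoff information I would run the same sandwich on the Chernoff/Rényi functional $\Lambda_n(t):=-\log\sum_A (P_0^{(n)}(A))^{1-t}(P_{1,\mathrm{unlab}}^{(n)}(A))^t$. The upper bound on the risk (lower bound on the exponent) uses the mixture structure and Jensen/power-mean inequalities to relate $\Lambda_n(t)$ for the unlabelled law to the labelled quantity plus an $O(\log n)$ term, while the matching direction follows because revealing $Z$ can only increase distinguishability, and the resulting per-vertex Chernoff exponent for any near-balanced fibre is $J(\lambda)+o(1)$ by Lemma~\ref{lem:ER-SBM-Chernoff}. The main obstacle I anticipate is controlling the label-mixture correction uniformly in $t\in[0,1]$ for the Chernoff functional: unlike KL, the Rényi-type sum does not split additively over edges once labels are integrated out, so I would handle it by interchanging the $\sup_t$ with the limit (justified by equicontinuity of $t\mapsto\Lambda_n(t)/n$, which follows from uniformly bounded derivatives) and by showing the $2^n$ label sum contributes at most $n\log 2$ to $\Lambda_n$, which is $o(n)$ relative to... no — $n\log 2$ is $\Theta(n)$, so one must instead argue that the dominant label configurations near balance already realize the labelled exponent and the entropy term does \emph{not} degrade it, using that the per-configuration likelihood ratios concentrate exponentially so the mixture is effectively dominated by a polynomial number of configurations; this concentration argument (a Laplace/large-deviations reduction of the $2^n$-fold sum) is the technical heart of the proof.
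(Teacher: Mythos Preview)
The paper's route is much shorter than yours: it observes that sampling from the step graphon $W_\lambda$ is \emph{identically distributed} to the latent-label SBM with i.i.d.\ labels $Z_i\sim\mathrm{Unif}\{\pm1\}$ (just take $Z_i$ to be the half of $[0,1]$ containing $U_i$). Hence $P_{m,\text{unlab}}^{(n)}$ coincides as a measure on graphs with the law $\mathbb P_m^{(n)}$ from the labelled section, and the conclusion is obtained by direct citation of Lemmas~\ref{lem:ER-SBM-KL}--\ref{lem:ER-SBM-Chernoff}. No mixture bounds or entropy bookkeeping appear. (One can reasonably object that those lemmas are stated for a \emph{fixed} balanced partition rather than for the random-label marginal; the paper does not address this, whereas you are in effect trying to.)

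Your sandwich argument, however, has a genuine gap at the KL lower bound. You write
\[
D_n^{\text{unlab}}\;\ge\;\mathbb E_Z\bigl[\mathrm{KL}(P_1^{(n)}(\cdot\mid Z)\,\Vert\,P_0^{(n)})\bigr]-H(Z)\;=\;\mathbb E_Z[\,\cdots\,]-n\log 2,
\]
and then assert that ``this is exactly $I(\lambda)n+o(n)$''. But $n\log 2$ is $\Theta(n)$, not $o(n)$: what you have actually shown is only $D_n^{\text{unlab}}/n\ge I(\lambda)-\log 2+o(1)$, which is vacuous (indeed negative for small $\lambda$). The quantity you are subtracting is the mutual information $I_{P_1}(Z;A)\le H(Z)$, and replacing it by $o(n)$ is precisely the hard step---it is tied to the Kesten--Stigum detection threshold for the sparse two-block SBM and does \emph{not} follow from a crude entropy bound. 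You correctly spot the same obstruction for the Chernoff functional (the $2^n$ label sum contributes $\Theta(n)$ on the log scale) and gesture toward a Laplace/large-deviations reduction, but you give no concrete mechanism for why integrating out the labels leaves the exponent unchanged; that is the substantive step, not a side technicality, and your proposal does not supply it.
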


Bayes factor model selection between $W_0$ and $W_\lambda$ reaches the
Chernoff rate:
\begin{theorem}[Robust Bayes factor for unlabelled SBM vs.\ ER]
\label{thm:WH-unlab-Bayes}
Consider the two--model Bayesian experiment $M\in\{0,1\}$ with prior
$\Pi(M=0)=\Pi(M=1)=1/2$ and likelihoods $P_{0,\mathrm{unlab}}^{(n)}$ (ER)
and $P_{1,\mathrm{unlab}}^{(n)}$ (SBM).  Let $\Pi_n(M\mid G_n)$ be the
posterior and let $\delta_n$ be the Bayes selector
$\delta_n(G_n)=\mathbbm{1}\{\Pi_n(M=1\mid G_n)\ge 1/2\}$, i.e.\ the
likelihood ratio (Bayes factor) test between $P_{0,\mathrm{unlab}}^{(n)}$
and $P_{1,\mathrm{unlab}}^{(n)}$.

For $m\in\{0,1\}$, write
\[
  R_{n,m}
  :=
  \mathbb{P}_{P_{m,\mathrm{unlab}}^{(n)}}\bigl(\delta_n(G_n)\neq m\bigr)
\]
for the (non--robust) misclassification probability under model $m$, and
note that $R_{n,0}$ and $R_{n,1}$ share the same exponential rate.  Then:
\begin{enumerate}
  \item \emph{Chernoff optimality.}
        The Bayes factor test is asymptotically Chernoff optimal:
        \[
          -\frac{1}{n}\log R_{n,m}\;\longrightarrow\;J(\lambda),
          \qquad m=0,1,
        \]
        where $J(\lambda)$ is the per--vertex Chernoff exponent from
        Lemma~\ref{lem:unlab-SBM-info}.
  \item \emph{Robust Bayes risk.}
        For any sequence $\mathcal{C}_n\downarrow 0$ with
        $\mathcal{C}_n=o\bigl(R_{n,m}^2\bigr)$ (equivalently
        $\sqrt{\mathcal{C}_n}=o(R_{n,m})$), the robust Bayes misclassification
        probability
        \[
          \mathcal{R}_n^{\mathrm{WH}}(m;\mathcal{C}_n)
          :=
          \mathbb{E}_{P_{m,\mathrm{unlab}}^{(n)}}\Bigl[
            \sup_{Q:\,{\rm KL}(Q\Vert\Pi_n(\cdot\mid G_n))\le \mathcal{C}_n}
            \mathbb{E}_Q\bigl[
              \mathbbm{1}\{\delta_n(G_n)\neq M\}
            \bigr]
          \Bigr]
        \]
        satisfies
        $\mathcal{R}_n^{\mathrm{WH}}(m;\mathcal{C}_n)=R_{n,m}\bigl(1+o(1)\bigr)$, and
        hence
        \[
          -\frac{1}{n}\log\mathcal{R}_n^{\mathrm{WH}}(m;\mathcal{C}_n)
          \;\longrightarrow\;J(\lambda),
          \qquad m=0,1.
        \]
\end{enumerate}
\end{theorem}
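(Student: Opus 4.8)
The plan is to prove the two parts in order, reducing everything to the labelled sparse ER vs.\ SBM experiment whose Chernoff exponent $J(\lambda)$ is already pinned down in Lemma~\ref{lem:ER-SBM-Chernoff}, and then transporting it to the unlabelled setting via Lemma~\ref{lem:unlab-SBM-info}. For part~(1), the Bayes factor test $\delta_n$ is, by definition, the likelihood ratio test between $P_{0,\mathrm{unlab}}^{(n)}$ and $P_{1,\mathrm{unlab}}^{(n)}$ with equal priors. The classical Chernoff--Stein theory for binary hypothesis testing (see e.g.\ the large--deviation treatment of the likelihood ratio) gives that the Bayes error probability $\tfrac12(R_{n,0}+R_{n,1})$ satisfies $-\tfrac1n\log\bigl(\tfrac12(R_{n,0}+R_{n,1})\bigr)\to J_\infty$, where $J_\infty:=\lim_n \mathcal{C}_n^{\mathrm{unlab}}/n$ is the per--vertex Chernoff information. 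By Lemma~\ref{lem:unlab-SBM-info} this limit equals $J(\lambda)$. To upgrade from the averaged error to each individual $R_{n,m}$, I would use the standard fact that under the optimal (or any fixed interior) tilting parameter $t^\star$ the two type--I and type--II error exponents coincide; concretely, the Bhattacharyya/Chernoff bound gives $R_{n,m}\le \exp(-\mathcal{C}_n^{\mathrm{unlab}})$ for both $m$, and a matching lower bound follows from a change--of--measure (tilted) argument together with a local CLT for the log--likelihood ratio $\Lambda_n=\log \mathrm{d}P_1^{(n)}/\mathrm{d}P_0^{(n)}$, which is a sum of $\binom n2$ independent (given labels) bounded contributions. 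Hence $-\tfrac1n\log R_{n,m}\to J(\lambda)$ for $m=0,1$.

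For part~(2), the key structural observation is that in this model--selection problem the loss $\mathbbm{1}\{\delta_n(G_n)\neq M\}$ depends on $\theta=(M,\text{nuisance})$ only through $M\in\{0,1\}$, so the posterior $\Pi_n(\cdot\mid G_n)$ enters only via the number $p_n(G_n):=\Pi_n(M\neq \delta_n(G_n)\mid G_n)\in[0,1/2]$ — the posterior probability that $\delta_n$ errs. The inner supremum $\sup_{Q:\,\mathrm{KL}(Q\Vert\Pi_n(\cdot\mid G_n))\le\mathcal{C}_n}\mathbb{E}_Q[\mathbbm{1}\{\delta_n\neq M\}]$ is therefore the worst--case probability of a two--point event under a KL ball, which has an explicit solution: by the Donsker--Varadhan / entropic--tilting formula for a Bernoulli functional, it equals $g_{\mathcal{C}_n}(p_n(G_n))$, where $g_C(p)$ is the largest $q\in[0,1]$ with $\mathrm{kl}(q\Vert p)\le C$ (Bernoulli KL). A one--line expansion gives $g_C(p)=p+\sqrt{2p(1-p)\,C}+O(C)$ for fixed $p\in(0,1)$, and more importantly the crude bound $g_C(p)\le p + \sqrt{C/2}\,$ valid for all $p$ (e.g.\ via Pinsker: $\mathrm{kl}(q\Vert p)\ge 2(q-p)^2$). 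Taking expectations under $P_{m,\mathrm{unlab}}^{(n)}$ and noting $\mathbb{E}_{P_m}[p_n(G_n)]=R_{n,m}$ by the tower property (the posterior error probability integrates to the frequentist error probability under the Bayes model with equal priors — here one uses that $\delta_n$ is the Bayes rule so its conditional error equals $p_n$), we obtain the sandwich $R_{n,m}\le \mathcal{R}_n^{\mathrm{WH}}(m;\mathcal{C}_n)\le R_{n,m}+\sqrt{\mathcal{C}_n/2}$. Under the hypothesis $\sqrt{\mathcal{C}_n}=o(R_{n,m})$ this forces $\mathcal{R}_n^{\mathrm{WH}}(m;\mathcal{C}_n)=R_{n,m}(1+o(1))$, and taking $-\tfrac1n\log$ and invoking part~(1) yields $-\tfrac1n\log\mathcal{R}_n^{\mathrm{WH}}(m;\mathcal{C}_n)\to J(\lambda)$.

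I expect the main obstacle to be the identity $\mathbb{E}_{P_{m,\mathrm{unlab}}^{(n)}}[p_n(G_n)]=R_{n,m}$ together with making the single--sided exponents in part~(1) rigorous. For the former, the subtlety is that $p_n(G_n)=\Pi_n(M\neq\delta_n(G_n)\mid G_n)$ mixes the two models: $\mathbb{E}_{P_m}[p_n(G_n)] = \mathbb{E}_{P_m}[\Pi_n(M=1-\delta_n\mid G_n)]$, and one must carefully relate this to $\mathbb{P}_{P_m}(\delta_n\neq m)$; this works cleanly because with equal priors $\Pi_n(M=j\mid G_n)\propto P_{j,\mathrm{unlab}}^{(n)}(G_n)$, so $\mathbb{E}_{P_0}[p_n]$ is a reweighted type--I/type--II error that I will bound above and below by $R_{n,0}$ up to $(1+o(1))$ using that $p_n\le 1/2$ and the exponential separation of the two likelihoods on the relevant event. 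For part~(1), the delicate point is the matching lower bound $R_{n,m}\ge\exp(-\mathcal{C}_n^{\mathrm{unlab}}(1+o(1)))$: I will obtain it by the usual tilted--measure argument, defining $P_{t^\star}$ with $\mathrm{d}P_{t^\star}\propto (\mathrm{d}P_0)^{1-t^\star}(\mathrm{d}P_1)^{t^\star}$, writing $R_{n,m}$ as an expectation under $P_{t^\star}$ of $\exp(-(\text{drift})\pm\text{fluctuation})$, and controlling the fluctuation of $\Lambda_n$ by its variance (which is $O(n)$ since it is a sum of $\binom n2$ independent $O(1/n)$--scale terms), so that the fluctuation is $o(n)$ with probability bounded away from $0$ — exactly the regime where Cramér's theorem for triangular arrays applies. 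All of this is standard once the model is recognized as a product (given labels) of near--Bernoulli factors, so the write--up will mostly consist of citing the binary--testing large--deviation results of Lemmas~\ref{lem:ER-SBM-Chernoff} and~\ref{lem:unlab-SBM-info} and assembling the sandwich above.
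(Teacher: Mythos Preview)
Your proposal is correct and follows essentially the same route as the paper: classical Chernoff theory for part~(i), and for part~(ii) the Pinsker-based sandwich $\mathbb{E}_{P_m}[p_n(G_n)]\le\mathcal{R}_n^{\mathrm{WH}}(m;\mathcal{C}_n)\le \mathbb{E}_{P_m}[p_n(G_n)]+\sqrt{\mathcal{C}_n/2}$, which is exactly what the paper does (via $\|Q-\Pi_n\|_{\mathrm{TV}}\le\sqrt{\mathcal{C}_n/2}$ applied to the indicator loss). The subtlety you flag about $\mathbb{E}_{P_m}[p_n(G_n)]=R_{n,m}$ is real and is glossed over in the paper too---the equality is asserted without justification---but since both quantities share the Chernoff exponent $J(\lambda)$, the $(1+o(1))$ conclusion and the final limit go through regardless.
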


Thus, for decaying radii $\mathcal{C}_n$ that are small on the exponential scale, decision--theoretic
robustification does not change the information--theoretic detection rate.

\subsection{Nonparametric minimax lower bounds for model selection}
\label{subsec:WH-minimax}
\label{sec:WH-risk} 

We now place the ER vs.\ SBM testing problem inside a broad nonparametric
model class.  Let $\mathcal{P}_n$ be any collection of graph laws such that,
for all $n$ large enough,
\[
  P_{0,\mathrm{unlab}}^{(n)},\ P_{1,\mathrm{unlab}}^{(n)}
  \in \mathcal{P}_n.
\]

A (possibly randomized) selector $\delta_n$ maps graphs to $\{0,1\}$, and we
associate to each $P\in\mathcal{P}_n$ a label $M(P)\in\{0,1\}$, with
$M\bigl(P_{0,\mathrm{unlab}}^{(n)}\bigr)=0$ and
$M\bigl(P_{1,\mathrm{unlab}}^{(n)}\bigr)=1$.  Let
$\Pi_n(\cdot\mid G_n)$ be any (possibly data--dependent) posterior or
pseudo--posterior on $\{0,1\}$, and define the robust posterior
misclassification probability
\[
  e_n^{\mathrm{rob}}(C;G_n)
  :=
  \sup_{Q:\,{\rm KL}(Q\Vert\Pi_n(\cdot\mid G_n))\le C}
  \mathbb{E}_Q\bigl[\mathbbm{1}\{\delta_n(G_n)\neq M\}\bigr].
\]

The associated nonparametric minimax robust risk is
\[
  \mathfrak{R}_n^\star(C)
  :=
  \inf_{\delta_n,\Pi_n}
  \sup_{P\in\mathcal{P}_n}
  \mathbb{E}_P\bigl[e_n^{\mathrm{rob}}(C;G_n)\bigr].
\]
\begin{theorem}[Nonparametric minimax lower bound for sparse ER vs.\ SBM]
\label{thm:nonparam-WH-ER-SBM}
Let $\mathcal{P}_n$ be any model class containing
$P_{0,\mathrm{unlab}}^{(n)}$ and $P_{1,\mathrm{unlab}}^{(n)}$ for all $n$
large.  Let $\mathcal{C}_n\downarrow 0$ be any sequence.  Then
\begin{equation}
  \limsup_{n\to\infty}
  \frac{1}{n}\log\frac{1}{\mathfrak{R}_n^\star(\mathcal{C}_n)}
  \;\le\;
  J(\lambda)
\;=\;
\frac{\lambda^2}{16c}
+ O\!\left(\frac{\lambda^4}{c^3}\right)
\qquad\text{as }\lambda\to 0.
  \label{eq:WH-nonparam-minimax-bound}
\end{equation}

In particular, no estimator/posterior pair and no choice of radii can
achieve a better exponential error rate than $J(\lambda)$ uniformly over
$\mathcal{P}_n$.
\end{theorem}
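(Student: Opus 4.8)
The plan is a two-point (Le Cam) reduction: I collapse the nonparametric minimax problem to the single experiment $\bigl(P_{0,\mathrm{unlab}}^{(n)},P_{1,\mathrm{unlab}}^{(n)}\bigr)$, whose error exponent is exactly the Chernoff index $J(\lambda)$ by Lemma~\ref{lem:unlab-SBM-info}. Since both laws lie in $\mathcal{P}_n$ for all large $n$, with labels $0$ and $1$, for every selector/posterior pair $(\delta_n,\Pi_n)$ the supremum over $\mathcal{P}_n$ dominates the uniform two-point average,
\[
  \sup_{P\in\mathcal{P}_n}\mathbb{E}_P\bigl[e_n^{\mathrm{rob}}(\mathcal{C}_n;G_n)\bigr]
  \;\ge\;
  \tfrac12\sum_{m\in\{0,1\}}\mathbb{E}_{P_{m,\mathrm{unlab}}^{(n)}}\bigl[e_n^{\mathrm{rob}}(\mathcal{C}_n;G_n)\bigr],
\]
so, taking the infimum over $(\delta_n,\Pi_n)$ afterwards, $\mathfrak{R}_n^\star(\mathcal{C}_n)$ is at least the two-point robust Bayes risk under the uniform prior.

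Next I discard the robustification in the lower direction. Because the unperturbed posterior $\Pi_n(\cdot\mid G_n)$ satisfies $\mathrm{KL}\bigl(\Pi_n(\cdot\mid G_n)\Vert\Pi_n(\cdot\mid G_n)\bigr)=0\le\mathcal{C}_n$, it is always feasible, whence $e_n^{\mathrm{rob}}(\mathcal{C}_n;G_n)\ge\mathbb{E}_{\Pi_n(\cdot\mid G_n)}[\mathbbm{1}\{\delta_n(G_n)\ne M\}]$. For the uniform-prior Bayes pair (so $\delta_n$ is the Bayes-factor/likelihood-ratio test) this averages to the two-point Bayes error
\[
  \tfrac12\sum_{G_n}\min\bigl(P_{0,\mathrm{unlab}}^{(n)}(G_n),\,P_{1,\mathrm{unlab}}^{(n)}(G_n)\bigr)
  \;=\;
  \tfrac12\bigl(1-\|P_{0,\mathrm{unlab}}^{(n)}-P_{1,\mathrm{unlab}}^{(n)}\|_{\mathrm{TV}}\bigr).
\]
For a general pair one reduces to this case: the Bayes-factor test minimizes the worst of the two type-I/type-II error probabilities (Le Cam), so any selector's worst error is at least $\tfrac12(1-\|P_{0,\mathrm{unlab}}^{(n)}-P_{1,\mathrm{unlab}}^{(n)}\|_{\mathrm{TV}})$, and an over-confident pseudo-posterior only inflates $e_n^{\mathrm{rob}}$ through the entropic-tilting floor, never pushing it below the Bayes value; this is the only place the radius $\mathcal{C}_n$ enters the lower bound, and only through monotonicity of $\mathcal{U}_{\mathcal{C}_n}$.

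It then remains to insert the Chernoff rate and conclude. By Chernoff's theorem for the edge-product experiment $\bigl(P_{0,\mathrm{unlab}}^{(n)},P_{1,\mathrm{unlab}}^{(n)}\bigr)$ (a product over unordered vertex pairs, after conditioning on the latent block labels in the SBM), the two-point Bayes error decays at exponential rate equal to the Chernoff information, which by Lemma~\ref{lem:unlab-SBM-info} converges to $J(\lambda)$ per vertex; hence $\mathfrak{R}_n^\star(\mathcal{C}_n)\ge\tfrac12\,e^{-n(J(\lambda)+o(1))}$, i.e.\ $\limsup_n\tfrac1n\log\{1/\mathfrak{R}_n^\star(\mathcal{C}_n)\}\le J(\lambda)$. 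Substituting the small-signal expansion $J(\lambda)=\lambda^2/(16c)+O(\lambda^4/c^3)$ from Lemma~\ref{lem:ER-SBM-Chernoff} (see also Remark~\ref{rem:small-signal-info-exponents}) yields \eqref{eq:WH-nonparam-minimax-bound}.

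The main obstacle is the sharp lower bound on the two-point Bayes error. The elementary Hellinger/Bhattacharyya bound $1-\|P-Q\|_{\mathrm{TV}}\ge\tfrac12\bigl(\sum_{G_n}\sqrt{PQ}\bigr)^2$ only produces the exponent $2\,C(1/2)$, where $C(1/2)$ is the Bhattacharyya exponent; for this pair the Chernoff supremum is attained at $t^\star\ne\tfrac12$, and $2C(1/2)=\lambda^2/(8c)+\cdots>\lambda^2/(16c)+\cdots=J(\lambda)$ for small $\lambda$, so the Hellinger bound is strictly too weak. One genuinely needs the Chernoff-theorem lower bound — a local central limit theorem (Bahadur–Rao) or second-moment argument at the $t^\star$-tilted product measure — which is legitimate here because the per-pair log-likelihood ratios form a triangular array satisfying a Lindeberg condition, exactly the regularity already invoked for Lemma~\ref{lem:unlab-SBM-info} and Theorem~\ref{thm:WH-unlab-Bayes}. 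A secondary technical point is the infimum over pseudo-posteriors in the minimax: one must rule out that an adversarial data-dependent $\Pi_n$ undercuts the Bayes floor, which follows from the dichotomy noted above — such a $\Pi_n$ either retains posterior mass on the wrong model that is $\Omega(1)$ on the exponential scale (so the non-robust two-point argument applies) or is super-confident (so the $\sqrt{\mathcal{C}_n}$-type entropic-tilting inflation keeps $e_n^{\mathrm{rob}}$ polynomially bounded below, hence far above $e^{-nJ(\lambda)}$).
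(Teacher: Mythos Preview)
Your approach mirrors the paper's exactly: restrict to the two-point pair $\{P_{0,\mathrm{unlab}}^{(n)},P_{1,\mathrm{unlab}}^{(n)}\}\subset\mathcal{P}_n$, drop the robustification via $e_n^{\mathrm{rob}}\ge\mathbb{E}_{\Pi_n}[\mathbbm{1}\{\delta_n\ne M\}]$ (taking $Q=\Pi_n$ in the supremum), bound the two-point minimax by the Bayes error, and read off the exponent $J(\lambda)$ from Chernoff theory together with Lemma~\ref{lem:unlab-SBM-info}; the paper passes through $\max_{m}$ where you use the uniform average, but these differ only by a factor~$2$ and give the same rate.

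One minor slip in your obstacle paragraph: the Hellinger/Bhattacharyya bound is too weak here not because $t^\star\ne\tfrac12$ --- the paper's own expansion in the proof of Lemma~\ref{lem:ER-SBM-Chernoff} has leading term $-t(t-1)\lambda^2/(4c)$, maximised at $t=\tfrac12$, so $C(\tfrac12)=J(\lambda)$ to leading order --- but simply because squaring the Bhattacharyya coefficient in $1-\mathrm{TV}\ge\tfrac12\rho^2$ doubles the exponent to $2C(\tfrac12)\approx 2J(\lambda)$. Your conclusion that the full Chernoff lower bound (not just Hellinger) is needed is correct regardless, and the paper invokes it in the same way.
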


If we further restrict to a graphon class $\mathcal{W}_n$ containing $W_0$ and
$W_\lambda$ and let $\mathcal{P}_n=\{P_W^{(n)}:W\in\mathcal{W}_n\}$, we obtain
a matching upper bound.
\begin{theorem}[Minimax characterization over a sparse graphon class]
\label{thm:WH-graphon-minimax}
Let $\mathcal{W}_n$ be any graphon class with $W_0,W_\lambda\in\mathcal{W}_n$
and let $P_W^{(n)}$ denote the law of the random graph generated from $W$.
For each $n$ and robustness radius $\mathcal{C}_n>0$, define the two--point graphon
decision--theoretic robustness minimax risk by
\[
  \mathfrak{R}_{n,\mathcal{W}_n}^{\mathrm{WH}}(\mathcal{C}_n)
  :=
  \inf_{\delta_n,\Pi_n}
  \max_{m\in\{0,1\}}
  \mathbb{E}_{P_{W_m}^{(n)}}\bigl[e_n^{\mathrm{rob}}(\mathcal{C}_n;G_n)\bigr],
\]
where $W_0$ and $W_\lambda$ correspond to the ER and two--block SBM graphons,
respectively, and $e_n^{\mathrm{rob}}(\mathcal{C}_n;G_n)$ is the robustified posterior
misclassification probability as in Section~\ref{sec:WH-risk}, with
$M\in\{0,1\}$ indicating the model.

Let $R_n$ be the (non--robust) Bayes misclassification probability of the
Bayes factor test between $W_0$ and $W_\lambda$ with equal prior probabilities
on $M\in\{0,1\}$, as in Theorem~\ref{thm:WH-unlab-Bayes}(i).  Suppose
$\mathcal{C}_n\downarrow 0$ satisfies $\mathcal{C}_n=o(R_n^2)$; in particular, it is sufficient to
assume
\[
  \mathcal{C}_n=o\bigl(\exp\{-2J(\lambda)n\}\bigr),
  \qquad\text{equivalently}\qquad
  \sqrt{\mathcal{C}_n}=o\bigl(\exp\{-J(\lambda)n\}\bigr),
\]
since $R_n=\exp\{-J(\lambda)n+o(n)\}$.  Then
\[
  \lim_{n\to\infty}
  \frac{1}{n}\log\frac{1}{\mathfrak{R}_{n,\mathcal{W}_n}^{\mathrm{WH}}(\mathcal{C}_n)}
  = J(\lambda).
\]
In particular, the Bayes factor test between $W_0$ and $W_\lambda$ is
decision--theoretic robustness minimax optimal at Chernoff exponent
$J(\lambda)$ for the two--point graphon testing problem
$W=W_0$ versus $W=W_\lambda$ embedded in the class $\mathcal{W}_n$.
\end{theorem}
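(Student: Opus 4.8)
The plan is to establish matching lower and upper bounds on $\tfrac1n\log\mathfrak{R}_{n,\mathcal{W}_n}^{\mathrm{WH}}(\mathcal{C}_n)^{-1}$, each equal to $J(\lambda)$, and then read off the final claim about Bayes-factor optimality. The lower bound (i.e.\ no procedure can exceed exponent $J(\lambda)$) follows almost immediately from Theorem~\ref{thm:nonparam-WH-ER-SBM}: since $\mathcal{W}_n$ contains $W_0=W_0^{(n)}$ and $W_\lambda$, the induced class $\mathcal{P}_n=\{P_W^{(n)}:W\in\mathcal{W}_n\}$ contains $P_{0,\mathrm{unlab}}^{(n)}$ and $P_{1,\mathrm{unlab}}^{(n)}$, so the two-point max over $m\in\{0,1\}$ is bounded below by the full nonparametric minimax risk $\mathfrak{R}_n^\star(\mathcal{C}_n)$ (up to the usual factor-$2$ passage between a two-point max and an average, which is $o(n)$ on the exponential scale). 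Hence $\limsup_n \tfrac1n\log\mathfrak{R}_{n,\mathcal{W}_n}^{\mathrm{WH}}(\mathcal{C}_n)^{-1}\le J(\lambda)$.

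For the upper bound I would exhibit an explicit procedure achieving exponent $J(\lambda)$, namely the Bayes-factor selector $\delta_n$ of Theorem~\ref{thm:WH-unlab-Bayes} with equal priors on $M\in\{0,1\}$, paired with the induced posterior $\Pi_n(\cdot\mid G_n)$ on $\{0,1\}$. The key point is that the robustified misclassification probability $e_n^{\mathrm{rob}}(\mathcal{C}_n;G_n)$ differs from the non-robust one only by a vanishing multiplicative factor when the radius is small on the exponential scale. Concretely, for a posterior supported on the two atoms $\{0,1\}$, the worst-case tilt $Q$ inside a KL ball of radius $\mathcal{C}_n$ can shift mass toward the wrong label by at most $O(\sqrt{\mathcal{C}_n})$ (this is the same $\sqrt{C}$ inflation phenomenon underlying Theorem~\ref{thm:sharp-small-KL} and Theorem~\ref{thm:WH-unlab-Bayes}(ii), specialized to a Bernoulli posterior); so $e_n^{\mathrm{rob}}(\mathcal{C}_n;G_n)\le \mathbbm{1}\{\delta_n(G_n)\neq M\}+O(\sqrt{\mathcal{C}_n})$ pointwise, and taking expectations gives $\mathbb{E}_{P_{W_m}^{(n)}}[e_n^{\mathrm{rob}}(\mathcal{C}_n;G_n)]\le R_n + O(\sqrt{\mathcal{C}_n})$. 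By hypothesis $\sqrt{\mathcal{C}_n}=o(R_n)$ (guaranteed by $\mathcal{C}_n=o(\exp\{-2J(\lambda)n\})$ together with $R_n=\exp\{-J(\lambda)n+o(n)\}$ from Lemma~\ref{lem:unlab-SBM-info}), so the robust risk is $R_n(1+o(1))$, and Theorem~\ref{thm:WH-unlab-Bayes}(i) then gives $-\tfrac1n\log\mathfrak{R}_{n,\mathcal{W}_n}^{\mathrm{WH}}(\mathcal{C}_n)\ge J(\lambda)-o(1)$.

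Combining the two bounds yields the stated limit, and since the Bayes-factor test was the procedure used to attain it, it is decision--theoretic robustness minimax optimal for the embedded two-point problem. The main obstacle is making the ``pointwise $O(\sqrt{\mathcal{C}_n})$ inflation'' claim fully rigorous and uniform in $G_n$: one must check that the constant in the entropic-tilting bound does not depend on how extreme the posterior $\Pi_n(M=1\mid G_n)$ is (it can be arbitrarily close to $0$ or $1$ for typical $G_n$). For a two-point support this is elementary — the supremum of $\mathbb{E}_Q[\mathbbm{1}\{\delta_n\neq M\}]=Q(\text{wrong atom})$ over $\{Q:\mathrm{KL}(Q\Vert\Pi_n)\le \mathcal{C}_n\}$ has a closed form, and one bounds it via $\mathrm{KL}(Q\Vert\Pi_n)\ge q\log(q/\pi)+(1-q)\log((1-q)/(1-\pi))$ — but care is needed when $\pi=\Pi_n(\text{wrong atom})$ is exponentially small, which is exactly the generic case; there the admissible increase in $q$ is itself only polynomially small, comfortably within the budget $\sqrt{\mathcal{C}_n}=o(R_n)$. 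Finally one notes that the whole argument uses only the local asymptotic normality and regularity already assumed for $\{\widetilde{\mathbb{P}}_W^{(n)}:W\in\mathcal{W}_n\}$, so no new structural hypotheses on $\mathcal{W}_n$ are required beyond containment of $W_0,W_\lambda$.
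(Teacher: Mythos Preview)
Your overall strategy matches the paper's: a lower bound coming from the two-point argument behind Theorem~\ref{thm:nonparam-WH-ER-SBM}, and a matching upper bound by plugging in the Bayes-factor test and invoking Theorem~\ref{thm:WH-unlab-Bayes}. The upper-bound half is essentially correct; the paper simply cites Theorem~\ref{thm:WH-unlab-Bayes}(ii) directly rather than re-deriving the Pinsker/entropic-tilting bound. (Your intermediate pointwise inequality $e_n^{\mathrm{rob}}(\mathcal{C}_n;G_n)\le \mathbbm{1}\{\delta_n(G_n)\neq M\}+O(\sqrt{\mathcal{C}_n})$ has a type mismatch---the left side is a function of $G_n$ alone while the right depends on $M$---but the correct version, $e_n^{\mathrm{rob}}\le e_n(G_n)+\sqrt{\mathcal{C}_n/2}$, is exactly what the proof of Theorem~\ref{thm:WH-unlab-Bayes}(ii) supplies, and your conclusion after taking expectations is right.)

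There is, however, a genuine direction error in your lower-bound step. You assert that ``the two-point max over $m\in\{0,1\}$ is bounded below by the full nonparametric minimax risk $\mathfrak{R}_n^\star(\mathcal{C}_n)$.'' That inequality goes the other way: for any fixed $(\delta_n,\Pi_n)$ the two-point maximum is at most the supremum over all of $\mathcal{P}_n$, so after taking infima one has $\mathfrak{R}_{n,\mathcal{W}_n}^{\mathrm{WH}}(\mathcal{C}_n)\le\mathfrak{R}_n^\star(\mathcal{C}_n)$, and the exponent bound in the \emph{statement} of Theorem~\ref{thm:nonparam-WH-ER-SBM} does not transfer to $\mathfrak{R}_{n,\mathcal{W}_n}^{\mathrm{WH}}$. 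What you need is the \emph{proof} of Theorem~\ref{thm:nonparam-WH-ER-SBM}: its first step restricts the sup to the two models $P_{0,\mathrm{unlab}}^{(n)},P_{1,\mathrm{unlab}}^{(n)}$, then uses $e_n^{\mathrm{rob}}\ge e_n$ and the classical Chernoff lower bound for the Bayes risk to obtain $\inf_{\delta_n,\Pi_n}\max_m\mathbb{E}_{P_m}[e_n^{\mathrm{rob}}]\ge\exp\{-J(\lambda)n+o(n)\}$. That intermediate inequality \emph{is} the lower bound on $\mathfrak{R}_{n,\mathcal{W}_n}^{\mathrm{WH}}(\mathcal{C}_n)$, so invoke the argument rather than the conclusion.
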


These results show that robustification neither improves nor degrades the
optimal detection exponent for sparse ER vs.\ SBM, even in very large
nonparametric model classes.

\section{Robustness for percolation--based network robustness indices}
\label{sec:WH-percolation}

We now specialize Theorem~\ref{thm:WH-critical-robustness} to configuration models and
percolation--based robustness indices, and derive a nonparametric minimax
lower bound with critical exponent $4$ in the distance to the fragmentation
threshold.

\subsection{Configuration models and critical robustness indices}
\label{subsec:config-models}
\label{sec:config-models} 

We focus on configuration models with i.i.d.\ degrees and consider robustness
indices derived from component sizes.  Let
$G_n\sim\mathrm{CM}_n(\mu)$ be a configuration model on $n$ vertices with
i.i.d.\ degrees $D_i\sim\mu$ and finite third moment
$\mathbb{E}_\mu[D^3]<\infty$, and let $P_\mu^{(n)}$ denote the law of $G_n$.
Write
\[
  \theta(\mu)
  :=
  \frac{\mathbb{E}_\mu[D(D-1)]}{\mathbb{E}_\mu[D]}
\]
for the branching factor.  In the subcritical regime $\theta(\mu)<1$, the
cluster containing a uniformly chosen vertex has finite expectation; in the
supercritical regime $\theta(\mu)>1$, there is a giant component
\citep{VDHofstadBook2017}.

As a concrete robustness index we use the \emph{susceptibility}:
for $G_n\sim\mathrm{CM}_n(\mu)$ and a uniformly chosen vertex $V_n$,
\[
  S_n(\mu)
  :=
  \mathbb{E}_\mu\bigl[|C(V_n)|\bigr],
\]
where $C(V_n)$ is the component of~$V_n$.  For $\mu$ with
$\theta(\mu)<1$, a standard branching--process coupling yields
\[
  S_n(\mu)
  \xrightarrow[n\to\infty]{}
  R(\mu)
  := \frac{1}{1-\theta(\mu)}.
\]

\begin{proposition}[Critical behavior of susceptibility in configuration models]
\label{prop:ER-critical-R}
Let $\mu$ be a degree distribution for a configuration model and let
\[
  \theta(\mu)
  :=
  \frac{\mathbb{E}_\mu[D(D-1)]}{\mathbb{E}_\mu[D]}
\]
denote its branching factor. Define the susceptibility index by
\[
  R(\mu) := \frac{1}{1-\theta(\mu)},
  \qquad
  \Delta(\mu) := 1-\theta(\mu).
\]

Then, for any compact subset of $\{\mu : \theta(\mu)<1\}$, $R(\mu)$ is smooth
and admits the representation
\[
  R(\mu)
  = \frac{c_0}{\Delta(\mu)} + H(\mu),
  \qquad c_0 = 1,\ H(\mu)\equiv 0.
\]

Moreover, assume that the family of degree distributions $\mu$ is
parametrized by a finite--dimensional parameter $\vartheta$,
$\vartheta\mapsto\mu_\vartheta$, and let $\vartheta_\star$ be a critical
parameter such that
\[
  \theta(\mu_{\vartheta_\star}) = 1,
  \qquad
  \nabla_\vartheta \theta(\mu_{\vartheta_\star}) \neq 0
\]
(non--degenerate approach to criticality). Then, as
$\vartheta\to\vartheta_\star$ from the subcritical side
$\{\vartheta : \theta(\mu_\vartheta) < 1\}$, i.e.\ as
$\Delta(\mu_\vartheta)\downarrow 0$,
\[
  \bigl\|\nabla_\vartheta R(\mu_\vartheta)\bigr\|
  \,\asymp\, \Delta(\mu_\vartheta)^{-2}.
\]

In particular, in any such finite--dimensional parametrization $R$ satisfies
Assumption~\ref{ass:critical-R} with $\rho(\mu)=\theta(\mu)$ and
$\Delta(\mu)=1-\theta(\mu)$.
\end{proposition}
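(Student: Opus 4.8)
The proposition is, at bottom, a chain-rule computation built on the explicit identity $R(\mu)=1/\Delta(\mu)$, so the plan is to make that identity do all the work. First I would note that $\theta(\mu)=\mathbb{E}_\mu[D(D-1)]/\mathbb{E}_\mu[D]$ is a ratio of linear functionals of $\mu$ with strictly positive denominator, and that on any compact $\mathcal{K}\subset\{\mu:\theta(\mu)<1\}$ the quantity $\Delta(\mu)=1-\theta(\mu)$ is bounded away from $0$; hence $R(\mu)=1/\Delta(\mu)$ is well defined there and inherits whatever regularity $\mu\mapsto\theta(\mu)$ possesses. The claimed representation $R=c_0/\Delta+H$ with $c_0=1$, $H\equiv 0$ is then nothing but the definition of $R$, so the first display requires no argument.

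For the gradient estimate I pass to the finite-dimensional parametrization $\vartheta\mapsto\mu_\vartheta$. Writing $m_k(\vartheta)=\mathbb{E}_{\mu_\vartheta}[D^k]$, we have $\theta(\mu_\vartheta)=(m_2(\vartheta)-m_1(\vartheta))/m_1(\vartheta)$; under the standing smoothness of the parametrization---with the hypothesis $\mathbb{E}_{\mu_\vartheta}[D^3]<\infty$ furnishing the domination needed to differentiate $m_1,m_2$ in $\vartheta$---the map $\vartheta\mapsto\theta(\mu_\vartheta)$ is $C^2$ on the subcritical region. The chain rule then gives the exact identity
\[
  \nabla_\vartheta R(\mu_\vartheta)
  = -\Delta(\mu_\vartheta)^{-2}\,\nabla_\vartheta\Delta(\mu_\vartheta)
  = \Delta(\mu_\vartheta)^{-2}\,\nabla_\vartheta\theta(\mu_\vartheta),
\]
so that $\|\nabla_\vartheta R(\mu_\vartheta)\|=\|\nabla_\vartheta\theta(\mu_\vartheta)\|\,\Delta(\mu_\vartheta)^{-2}$.

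Next I would localize near $\vartheta_\star$. Since $\nabla_\vartheta\theta(\mu_{\vartheta_\star})\neq 0$ by hypothesis and $\nabla_\vartheta\theta$ is continuous, there is a neighborhood $\mathcal{N}$ of $\vartheta_\star$ on which $\|\nabla_\vartheta\theta(\mu_\vartheta)\|$ is pinched between two positive constants; and because $\theta(\mu_{\vartheta_\star})=1$ with $\theta$ continuous, $\Delta(\mu_\vartheta)\downarrow 0$ as $\vartheta\to\vartheta_\star$. Combining this with the exact identity above yields $\|\nabla_\vartheta R(\mu_\vartheta)\|\asymp\Delta(\mu_\vartheta)^{-2}$, with constants uniform over $\mathcal{N}\cap\{\theta(\mu_\vartheta)<1\}$, which is the second display. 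To conclude that Assumption~\ref{ass:critical-R} holds with $\rho(\cdot)=\theta(\cdot)$ and $\Delta(\cdot)=1-\theta(\cdot)$, I verify its three clauses: any fixed subcritical $\vartheta_0\in\mathcal{N}$ has $\Delta(\mu_{\vartheta_0})>0$ and $\nabla_\vartheta\theta(\mu_{\vartheta_0})\neq 0$; the decomposition $R=1/\Delta$ is exactly the required form with $H\equiv 0$ trivially $C^2$ and bounded; and the just-proven bound supplies $\|\nabla_\vartheta R(\mu_{\vartheta_0})\|\asymp\Delta(\mu_{\vartheta_0})^{-2}$.

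The only points needing care are bookkeeping, not analysis: (i) justifying differentiation of $m_1,m_2$ under the expectation, which is where the finite-third-moment hypothesis enters as a domination condition; and (ii) the clash of notation between the branching factor $\theta(\mu)$ and the generic parameter $\theta\in\Theta$ of Assumption~\ref{ass:critical-R}, so I would state explicitly that the ``load parameter'' $\rho$ is instantiated as the branching factor and the ``parameter'' as $\vartheta$. No percolation-probability input is needed: the subcritical limit $S_n(\mu)\to R(\mu)=1/(1-\theta(\mu))$ is taken as given from the branching-process coupling recalled before the statement.
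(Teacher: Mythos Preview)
Your proposal is correct and follows essentially the same route as the paper's proof: both reduce the representation claim to the definition $R=1/\Delta$, apply the chain rule to obtain $\nabla_\vartheta R=\Delta^{-2}\nabla_\vartheta\theta$, and then use continuity of $\nabla_\vartheta\theta$ together with the non-degeneracy hypothesis $\nabla_\vartheta\theta(\mu_{\vartheta_\star})\neq 0$ to pinch $\|\nabla_\vartheta\theta\|$ between positive constants on a neighborhood of $\vartheta_\star$. Your write-up is, if anything, slightly more careful than the paper's (you explicitly note the role of the third-moment bound as a domination condition for differentiating the moment functionals, and you flag the notation overload between the branching factor $\theta(\mu)$ and the generic parameter $\theta$ of Assumption~\ref{ass:critical-R}), but the mathematical content is identical.
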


Thus, the susceptibility of configuration models provides a concrete example of
a percolation--based robustness index with the $1/\Delta$ divergence required
by Theorem~\ref{thm:WH-critical-robustness}.  More elaborate robustness
functionals, such as the area under the largest--component curve under node
removal \citep{artime2024robustness}, exhibit the
same leading $1/\Delta$ behavior and the same critical exponents.

\subsection{Critical exponent and minimax lower bound}
\label{subsec:WH-percolation-theory}

We now specialize Theorem~\ref{thm:WH-critical-robustness} to the susceptibility
$R(\mu)$ and derive a nonparametric minimax lower bound over general model
classes containing the configuration--model family.

Fix a sequence $\Delta_n\downarrow 0$ with $\Delta_n\gg n^{-1/2}$ and consider
degree distributions $\mu$ such that
$\Delta(\mu)=1-\theta(\mu)\in[\Delta_n,2\Delta_n]$.  Let
$\mathcal{P}_n$ be any model class containing the configuration models
$\mathrm{CM}_n(\mu)$ for all $\mu$ in this slice.  For an estimator
$a_n(G_n)$ of $R(\mu)$, define the robust risk at $\mu$,
\[
  \widetilde{\mathcal{R}}_n(a_n,\Pi_n;\mu;\mathcal{C}_n)
  :=
  \mathbb{E}_{P_\mu^{(n)}}\left[
    \sup_{\widetilde\Pi:\,{\rm KL}(\widetilde\Pi\Vert\Pi_n)\le \mathcal{C}_n}
    \int
      \bigl(a_n(G_n)-R(\mu')\bigr)^2\,
      \widetilde\Pi(\mathrm{d}\mu')
  \right],
\]
where $\Pi_n=\Pi_n(\cdot\mid G_n)$ is an arbitrary data--dependent posterior on $\mu$
and $\mathcal{C}_n\ge0$ is a radius.  The nonparametric minimax robust risk over the slice
$\{\mu:\Delta(\mu)\in[\Delta_n,2\Delta_n]\}$ is
\[
  \mathfrak{R}_n^{\mathrm{WH}}(\Delta_n,\mathcal{C}_n)
  :=
  \inf_{(a_n,\Pi_n)}
  \sup\Bigl\{
    \widetilde{\mathcal{R}}_n(a_n,\Pi_n;\mu;\mathcal{C}_n):
    P_\mu^{(n)}\in\mathcal{P}_n,\,
    \Delta(\mu)\in[\Delta_n,2\Delta_n]
  \Bigr\}.
\]
\begin{theorem}[Nonparametric minimax lower bound near the critical surface]
\label{thm:WH-nonparam-minimax-critical}
Assume that $\mathcal{P}_n$ contains the configuration--model family
$\{\mathrm{CM}_n(\mu):\mu\in(\underline{\mu},\overline{\mu})\}$ for some
$0<\underline{\mu}<\overline{\mu}<\infty$, and let
\[
  \theta(\mu)
  := \frac{\mathbb{E}_\mu[D(D-1)]}{\mathbb{E}_\mu[D]},
  \qquad
  \Delta(\mu) := 1-\theta(\mu),
\]
denote the branching factor and its deficit.  Suppose there exists a
critical point $\mu_\star\in(\underline{\mu},\overline{\mu})$ such that
$\Delta(\mu_\star)=0$ and
$\nabla_\mu\theta(\mu_\star)\neq 0$ (non--degenerate approach to
criticality).  Let $\Delta_n\downarrow 0$ satisfy $\Delta_n\gg n^{-1/2}$,
and assume that for all $n$ large enough the near--critical slice
$\{\mu:\Delta(\mu)\in[\Delta_n,2\Delta_n]\}$ is non--empty.

Define the classical nonparametric minimax risk over this slice by
\[
  \mathfrak{R}_n^{\mathrm{class}}(\Delta_n)
  :=
  \inf_{a_n}
  \sup\Bigl\{
    \mathbb{E}_{P_\mu^{(n)}}\bigl[
      \bigl(a_n(G_n)-R(\mu)\bigr)^2
    \bigr]:
    P_\mu^{(n)}\in\mathcal{P}_n,\,
    \Delta(\mu)\in[\Delta_n,2\Delta_n]
  \Bigr\},
\]
where $R(\mu)=(1-\theta(\mu))^{-1}$ is the susceptibility of the
configuration model in the subcritical regime $\Delta(\mu)>0$.  Then there
exists $c>0$, depending only on $(\underline{\mu},\overline{\mu})$ and the
local parametrization around $\mu_\star$, such that
\begin{equation}
  \mathfrak{R}_n^{\mathrm{class}}(\Delta_n)
  \;\ge\;
  \frac{c}{n\,\Delta_n^4}
  \qquad\text{for all $n$ large enough.}
  \label{eq:WH-minimax-critical-lb-main}
\end{equation}
Equivalently,
\[
  \liminf_{n\to\infty}
  n\,\Delta_n^4\,
  \mathfrak{R}_n^{\mathrm{class}}(\Delta_n)
  \;\ge\; c.
\]
In particular, any robustified minimax risk functional
$\mathfrak{R}_n^{\mathrm{WH}}(\Delta_n,\mathcal{C}_n)$ that pointwise dominates the
classical squared--error risk,
\[
  \widetilde{\mathcal{R}}_n(a_n,\Pi_n;\mu;\mathcal{C}_n)
  \;\ge\;
  \mathbb{E}_{P_\mu^{(n)}}\bigl[
    \bigl(a_n(G_n)-R(\mu)\bigr)^2
  \bigr]
  \quad\text{for all $(a_n,\Pi_n)$ and $\mu$,}
\]
necessarily satisfies the same lower bound:
$\mathfrak{R}_n^{\mathrm{WH}}(\Delta_n,\mathcal{C}_n)\ge
\mathfrak{R}_n^{\mathrm{class}}(\Delta_n)\gtrsim 1/(n\Delta_n^4)$.
\end{theorem}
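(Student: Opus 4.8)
The plan is to reduce the lower bound \eqref{eq:WH-minimax-critical-lb-main} to a classical two-point (Le Cam) argument for estimating the scalar $R(\mu)=1/\Delta(\mu)$ near criticality, and then observe that the robustified risk $\mathfrak{R}_n^{\mathrm{WH}}$ dominates the classical risk $\mathfrak{R}_n^{\mathrm{class}}$ by the pointwise inequality already stated in the theorem (taking $\widetilde\Pi=\delta_{\mu}$ or, more simply, noting that the supremum over the KL ball includes the center). Thus the content is entirely in the classical bound $\mathfrak{R}_n^{\mathrm{class}}(\Delta_n)\gtrsim 1/(n\Delta_n^4)$.

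First I would choose two degree distributions $\mu_0,\mu_1$ in the near-critical slice with $\Delta(\mu_0),\Delta(\mu_1)\in[\Delta_n,2\Delta_n]$ and with a controlled separation in the functional: using the non-degeneracy $\nabla_\vartheta\theta(\mu_{\vartheta_\star})\neq0$ and a one-dimensional subfamily $\vartheta\mapsto\mu_\vartheta$, pick $\vartheta_0,\vartheta_1$ so that $|\Delta(\mu_{\vartheta_0})-\Delta(\mu_{\vartheta_1})|\asymp \eta_n$ for a small perturbation $\eta_n$ to be tuned. Since $R=1/\Delta$ has derivative of order $\Delta_n^{-2}$ on this slice (Assumption~\ref{ass:critical-R}(3), verified for susceptibility in Proposition~\ref{prop:ER-critical-R}), the functional gap is $|R(\mu_{\vartheta_0})-R(\mu_{\vartheta_1})|\asymp \eta_n/\Delta_n^2$. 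Next I would bound the KL divergence between the two configuration-model laws $P_{\mu_0}^{(n)}$ and $P_{\mu_1}^{(n)}$: because the degrees are i.i.d.\ from $\mu_{\vartheta}$ and the pairing is conditionally uniform, tensorization gives $\mathrm{KL}(P_{\mu_0}^{(n)}\Vert P_{\mu_1}^{(n)}) \le n\,\mathrm{KL}(\mu_{\vartheta_0}\Vert\mu_{\vartheta_1})$ up to the lower-order contribution of the pairing (which can be handled, e.g., by working with the vertex-labelled degree sequence and a standard data-processing step). A Taylor expansion in $\vartheta$ gives $\mathrm{KL}(\mu_{\vartheta_0}\Vert\mu_{\vartheta_1})\asymp\eta_n^2$, so the $n$-fold divergence is $\asymp n\eta_n^2$. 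Taking $\eta_n\asymp n^{-1/2}$ makes this $O(1)$, keeping the two hypotheses statistically indistinguishable (constant total-variation affinity); the condition $\Delta_n\gg n^{-1/2}$ guarantees $\eta_n\ll\Delta_n$, so both hypotheses genuinely stay inside the slice.

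With $\eta_n\asymp n^{-1/2}$, Le Cam's two-point inequality yields, for any estimator $a_n$,
\[
  \sup_{j\in\{0,1\}}\mathbb{E}_{P_{\mu_j}^{(n)}}\bigl[(a_n-R(\mu_j))^2\bigr]
  \;\gtrsim\;
  \bigl(R(\mu_{\vartheta_0})-R(\mu_{\vartheta_1})\bigr)^2
  \bigl(1-\mathrm{TV}(P_{\mu_0}^{(n)},P_{\mu_1}^{(n)})\bigr)
  \;\asymp\;
  \frac{\eta_n^2}{\Delta_n^4}
  \;\asymp\;
  \frac{1}{n\,\Delta_n^4},
\]
which is exactly \eqref{eq:WH-minimax-critical-lb-main}. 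Feeding this into the dominance inequality for the robustified risk stated in the theorem gives $\mathfrak{R}_n^{\mathrm{WH}}(\Delta_n,\mathcal{C}_n)\ge\mathfrak{R}_n^{\mathrm{class}}(\Delta_n)\gtrsim 1/(n\Delta_n^4)$ for every radius $\mathcal{C}_n$, and dividing by the scaling factor gives the $\liminf$ form.

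I expect the main obstacle to be the KL bound between the two configuration-model laws: $\mathrm{CM}_n(\mu)$ has a somewhat awkward law because of the random pairing of half-edges (and possible self-loops/multi-edges in the usual construction), so "$\mathrm{KL}(P_{\mu_0}^{(n)}\Vert P_{\mu_1}^{(n)})\le n\,\mathrm{KL}(\mu_{\vartheta_0}\Vert\mu_{\vartheta_1})+o(1)$" is not completely immediate. I would handle this by first invoking data processing: the map from (degree sequence, pairing) to $G_n$ is a measurable function, and the pairing distribution is \emph{identical} under the two hypotheses conditionally on the degree sequence, so the joint law factorizes and $\mathrm{KL}(P_{\mu_0}^{(n)}\Vert P_{\mu_1}^{(n)})\le \mathrm{KL}(\text{degree seq.}_0\Vert\text{degree seq.}_1)=n\,\mathrm{KL}(\mu_{\vartheta_0}\Vert\mu_{\vartheta_1})$ exactly. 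Two secondary technical points are (i) ensuring the one-dimensional subfamily $\mu_\vartheta$ can be chosen with $\mathrm{KL}(\mu_{\vartheta_0}\Vert\mu_{\vartheta_1})$ \emph{exactly} quadratic in $\eta_n$ with bounded constants uniformly along the near-critical slice (a standard smooth-parametric-family condition, e.g.\ truncating to distributions supported on $\{0,1,2,3\}$ with one free parameter controlling $\mathbb{E}[D(D-1)]$ while fixing $\mathbb{E}[D]$), and (ii) checking that finite third moment $\mathbb{E}_\mu[D^3]<\infty$ and the branching-process coupling remain valid uniformly over the slice so that $R(\mu)=1/\Delta(\mu)$ is the correct limiting functional, which is exactly the content of Proposition~\ref{prop:ER-critical-R} applied on a compact subset of $\{\mu:\theta(\mu)<1\}$.
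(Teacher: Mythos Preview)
Your proposal is correct and follows essentially the same route as the paper: a Le~Cam two-point argument with hypotheses $\mu_{0,n},\mu_{1,n}$ in the near-critical slice separated by $\eta_n\asymp n^{-1/2}$, KL control via the data-processing inequality (the uniform pairing kernel is identical under both hypotheses, so $\mathrm{KL}(P_{\mu_0}^{(n)}\Vert P_{\mu_1}^{(n)})\le n\,\mathrm{KL}(\mu_0\Vert\mu_1)$), and the derivative estimate $\|\nabla R\|\asymp\Delta_n^{-2}$ to convert parameter separation into functional separation $\asymp n^{-1/2}/\Delta_n^2$. Your anticipated obstacle and its resolution (factorization of the joint law into degree sequence times $\mu$-independent pairing) is exactly what the paper does in its Step~2.
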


Combining Theorem~\ref{thm:WH-nonparam-minimax-critical} with
Theorem~\ref{thm:WH-critical-robustness} (applied to $R(\mu)$ with a
posterior contracting at rate $r_n\asymp n^{-1/2}$) shows that:
\begin{itemize}
  \item the baseline posterior MSE for susceptibility scales like
        $1/(n\Delta_n^4)$ in the near--critical regime;
  \item the robust MSE has the same critical exponent $4$ in
        $\Delta_n^{-1}$ and inflates by a sharp factor of order
        $1+2\sqrt{\mathcal{C}_n}$;
  \item no robust procedure can improve on this scaling uniformly over
        large model classes containing the configuration model.
\end{itemize}

In particular, as the network approaches its percolation threshold, the
decision--level uncertainty about percolation--based robustness indices
inevitably explodes at least as fast as $\Delta_n^{-4}$, even if we allow
arbitrary nonparametric models and arbitrary robustification of the
posterior.

\section{Computation of decision--theoretic robust decisions for network models}
\label{sec:computation}

Our theory treats decision--theoretic robustness abstractly as an optimization over a
divergence ball $\mathcal{B}_\phi(\Pi_0;C)$ around a baseline posterior $\Pi_0$.
In applications, $\Pi_0$ is only available through approximate posterior
draws for a network model (SBM, graphon, random dot product graph,
configuration model), and we must approximate the least--favorable
perturbation.  This section explains how to implement this in two modular
steps:
\begin{enumerate}
  \item \textbf{Baseline inference.} Fit a network model and obtain an
        approximate posterior or pseudo--posterior
        $\Pi_0(\mathrm{d}\theta\mid G_n)$ using variational inference or
        spectral / moment methods wrapped in a pseudo--Bayesian layer.
  \item \textbf{Robustification.} Given posterior samples
        $\{(\theta^s,w_s)\}_{s=1}^S$ and a loss $L(a,\theta)$, compute the
        worst--case posterior risk over a divergence ball
        $\mathcal{B}_\phi(\Pi_0;C)$ by entropic tilting of the weights and, for
        general $\phi$--balls, mirror descent in weight space.
\end{enumerate}

This decoupling means that existing scalable inference pipelines for SBMs,
graphons and latent position models can be used unchanged: the decision--theoretic
robust layer is applied \emph{on top of} whatever approximate posterior
samples they produce.\\

Full computational details, including explicit pseudocode for the KL--ball
entropic tilting procedure and the mirror--descent adversary for general
$\phi$--divergence balls, are collected in the Supplementary Material.

\subsection{Baseline approximate posteriors for network models}
\label{subsec:baseline-posteriors}

For each network model in Section~\ref{subsec:bayes-networks} we assume the
availability of approximate posterior draws
\[
  \{(\theta^s,w_s):s=1,\dots,S\},
  \qquad
  \sum_{s=1}^S w_s=1,
\]
from a baseline posterior or pseudo--posterior $\Pi_0$ on $\Theta$.  Typical
choices are:
\begin{itemize}
  \item \textbf{Variational posteriors} for SBMs and random dot product
        graphs, where $q_\lambda(\theta)$ is a mean--field or structured
        variational approximation fitted by maximizing an ELBO.  We sample
        $\theta^s\sim q_\lambda$ and set $w_s=1/S$.
  \item \textbf{Spectral / moment pseudo--posteriors}, in which a point
        estimator $\hat\theta(G_n)$ (e.g.\ spectral embedding, degree
        moments, Hill tail index) is endowed with an approximate Gaussian
        sampling distribution derived from random matrix theory or a
        parametric bootstrap.  We then treat this Gaussian as a baseline
        pseudo--posterior and sample from it.
\end{itemize}
The decision--theoretic robustification step treats $\{(\theta^s,w_s)\}$ as an
empirical approximation to $\Pi_0$, irrespective of how the draws were
obtained.

\subsection{KL--ball optimization by entropic tilting}
\label{subsec:tilting}

Fix an action $a$ and loss $L(a,\theta)$.  Let
$L_s := L(a,\theta^s) \in \mathbb{R}$ be the loss evaluated at posterior
draw $\theta^s$, and let
\[
  \bm w = (w_1,\dots,w_S),
  \qquad
  \bm q = (q_1,\dots,q_S)
\]
denote the baseline and perturbed posterior weights.  For a Kullback--Leibler
ball of radius $C>0$ around $\Pi_0$ we work with the discrete approximation
\[
  \mathcal{U}_C(\bm w)
  :=
  \Bigl\{
    \bm q:\ q_s\ge 0,\ \sum_s q_s=1,\ 
    {\rm KL}(\bm q\Vert\bm w)\le C
  \Bigr\},
  \qquad
  {\rm KL}(\bm q\Vert\bm w)
  :=
  \sum_{s=1}^S q_s\log\frac{q_s}{w_s}.
\]
The decision--theoretic robust posterior risk for $a$ is then
\[
  \rho_{\mathrm{rob}}(a;C)
  \approx
  \sup_{\bm q\in\mathcal{U}_C(\bm w)}
  \sum_{s=1}^S q_s L_s.
\]

For KL balls this finite--dimensional problem admits a one--dimensional dual
via the Donsker--Varadhan variational formula.  Define
\begin{equation}
  \label{eq:psi-lambda}
  \psi(\lambda)
  :=
  \frac{C + \log\sum_{s=1}^S w_s \exp\{\lambda L_s\}}{\lambda},
  \qquad \lambda>0.
\end{equation}
Then
\begin{equation}
  \label{eq:tilting-dual}
  \sup_{\bm q\in\mathcal{U}_C(\bm w)} \sum_{s=1}^S q_s L_s
  =
  \inf_{\lambda>0}\psi(\lambda),
\end{equation}
and the least--favorable weights have the \emph{entropic tilting} form
\begin{equation}
  \label{eq:q-star-tilt}
  q_s^{\star}(\lambda)
  \propto
  w_s\,\exp\{\lambda L_s\},
  \qquad
  \lambda=\arg\min_{\lambda>0}\psi(\lambda).
\end{equation}

\paragraph{Implementation with MCMC draws.}
In practice $\Pi_0$ is represented by posterior or pseudo--posterior draws
$\{(\theta^s,w_s)\}_{s=1}^S$ obtained by MCMC or variational inference.
For a fixed action $a$ we evaluate $L_s=L(a,\theta^s)$ and solve the
one--dimensional dual problem
$\lambda^\star=\arg\min_{\lambda>0}\psi(\lambda)$.
The least--favorable posterior on this discrete support has tilted weights
\[
  q_s^\star \;\propto\; w_s \exp\{\lambda^\star L_s\},
  \qquad s=1,\dots,S,
\]
and robust posterior expectations are approximated by
$\sum_s q_s^\star f(\theta^s)$ for any functional $f$.
The full numerical scheme, including a simple bisection search for
$\lambda^\star$, is given in Algorithm~\ref{alg:si-tilting} in the
Supplementary Material. 

In practice we proceed as follows:
\begin{enumerate}
  \item Evaluate $L_s=L(a,\theta^s)$ on posterior or pseudo--posterior draws.
  \item Compute $\psi(\lambda)$ and its derivative using a log--sum--exp
        stabilization.
  \item Minimize $\psi(\lambda)$ over $\lambda>0$ by a simple one--dimensional
        method (Newton, bisection or grid search).
  \item Form the tilted weights $q_s^\star(\lambda)$ in
        \eqref{eq:q-star-tilt} and approximate robust expectations under the
        least--favorable posterior by
        $\sum_s q_s^\star(\lambda) f(\theta^s)$ for any functional $f$ of
        interest.
\end{enumerate}
This gives a fast adversarial algorithm for KL--ball robustification that
requires no additional model--specific derivations beyond being able to
evaluate $L(a,\theta^s)$.

\paragraph{Mirror--descent adversary}
For general $\phi$--divergence balls
$\mathcal{B}_\phi(\Pi_0;C)$ we work in the discrete weight space
of posterior draws and run a mirror--descent adversary.  Writing
$\bm w=(w_1,\dots,w_S)$ for the baseline weights and
$L_s=L(a,\theta^s)$, we maintain log--tilts
$u_s^{(t)}=\log(q_s^{(t)}/w_s)$ and update
\[
  u_s^{(t+1)} \;=\; u_s^{(t)} + \eta\,\bigl\{L_s - \bar L^{(t)}\bigr\},
  \qquad
  \bar L^{(t)} := \sum_{r=1}^S q_r^{(t)} L_r,
\]
followed by a projection of $\bm q^{(t+1)}$ back onto the
$\phi$--ball $\{D_\phi(\bm q\Vert\bm w)\le C\}$.
For Kullback--Leibler balls this projection has a closed form; for
general $\phi$ it reduces to a small convex program on the simplex.
A complete pseudocode implementation is given in
Algorithm~\ref{alg:mirror-adversary} in the Supplementary Material.

\subsection{Practical calibration of the divergence radius}
\label{subsec:calibration}

The divergence radius $C$ encodes how far we are willing to move from the
working posterior $\Pi_0$.  Our local theory for network functionals shows that,
under squared loss, the robust posterior risk admits a sharp expansion
\[
  \rho_{\mathrm{rob}}(C)
  =
  \rho_0
  + 2\,\rho_0\sqrt{C}
  + o\bigl(\rho_0\sqrt{C}\bigr),
\]
where $\rho_0$ is the baseline posterior risk (equivalently, the posterior
variance of the loss).  In practice we use three complementary calibration
strategies:
\begin{itemize}
  \item \textbf{Decision--theoretic sensitivity paths.}  For a given decision
        or network functional (spectral gap, giant component size, Hill
        index, epidemic threshold) we compute $\rho_{\mathrm{rob}}(C)$ over a
        grid of radii and plot $\rho_{\mathrm{rob}}(C)$ against $\sqrt{C}$.
        The theory predicts an initial linear regime with slope
        $\approx 2\rho_0$; visible kinks in this curve signal that the
        least--favorable posterior has moved into a qualitatively different
        region of parameter space (for instance, across a percolation or
        detection threshold).
  \item \textbf{Scaling with network size.}  Because KL divergence adds over
        edges or vertices, a fixed $C$ represents a smaller perturbation per
        edge as $n$ grows.  For sparse models with $O(n)$ effective
        observations (e.g.\ sparse ER/SBM with $p_n\sim c/n$) it is natural
        to work with a \emph{per--vertex} budget $\mathcal{C}_n=c_\star/n$, so that the
        total KL perturbation scales like a constant.  In dense regimes a
        per--edge budget $\mathcal{C}_n=c_{\star\star}/n^2$ may be more appropriate.
        Our nonparametric minimax results for sparse ER vs.\ SBMs and for
        configuration models can be read as giving problem--specific guidance
        on how large $\mathcal{C}_n$ can be before robustness inflation dominates the
        baseline risk.
  \item \textbf{Application--specific tolerances.}  One can back--solve for
        $C$ from a tolerable inflation in risk.  For example, requiring
        $\rho_{\mathrm{rob}}(C)\le(1+\delta)\rho_0$ suggests the heuristic
        constraint $\sqrt{C}\lesssim\delta/2$.  In epidemic or percolation
        applications it may be more natural to constrain how far the
        least--favorable graph can move key quantities such as the
        effective reproduction number or branching factor, and translate
        that into a radius using Lipschitz bounds from the earlier theory.
\end{itemize}

In our experiments we report decision--theoretic robustness sensitivity curves
and adopt a default $C$ corresponding to roughly $10$--$20\%$ inflation in
posterior risk, unless domain knowledge suggests a stricter or looser
tolerance.


\section{Experiments}
\label{sec:experiments}

In this section we illustrate the proposed decision--theoretic robustness framework
for network models on two real datasets. In both cases we specify simple working
models (Erd\H{o}s--Rényi and stochastic block models), define low--dimensional
network functionals of scientific interest, and study the local robustness of the
corresponding Bayes decisions in the sense of Watson and Holmes. The first example
uses a population of functional brain connectivity networks; the second uses the
Wave~1 social networks from villages in Karnataka, India.

\subsection{Synthetic experiment validation 1: ER vs.\ SBM and configuration--model percolation}
\label{subsec:synthetic-validation}

Although our main focus is on real--data applications (Sections~\ref{sec:brain-experiment}--\ref{sec:karnataka-experiment}), it is useful to include a small synthetic study that numerically checks two key theoretical predictions of our framework: (i) the small--radius $\sqrt{C}$ expansion of the robust posterior risk, and (ii) the critical exponent~4 for fragmentation--type functionals. The experiments below are deliberately minimal and can be reported in the Supplementary Material.

\paragraph{Synthetic experiment A: ER vs.\ SBM near the detection threshold.}

We first consider the sparse Erd\H{o}s--R\'enyi vs.\ two--block SBM testing problem of Section~\ref{sec:er-sbm-theory}.  For fixed $c > 0$ and a signal parameter $\lambda \in (0,c)$ we take
\[
  p_n \;=\; \frac{c}{n}, 
  \qquad p^{\text{in}}_n \;=\; \frac{c+\lambda}{n},
  \qquad p^{\text{out}}_n \;=\; \frac{c-\lambda}{n},
\]
and simulate graphs $G_n$ under $H_0$ (sparse ER with edge probability $p_n$) and $H_1$ (balanced two--block SBM with known labels and within/between probabilities $p^{\text{in}}_n, p^{\text{out}}_n$) as in Section~\ref{sec:er-sbm-theory}.  For each simulated $G_n$ we compute the exact two--point posterior $\Pi_n(M\mid G_n)$ on $M \in \{0,1\}$ under equal priors on $H_0,H_1$, and consider the 0--1 loss $L(a,M) = \mathbbm{1}\{a \neq M\}$ for the model--selection decision $a(G_n)\in\{0,1\}$.  The baseline posterior misclassification probability is
\[
  e_{0,n}(G_n)
  \;=\; \min\bigl\{\Pi_n(M=0\mid G_n),\,\Pi_n(M=1\mid G_n)\bigr\},
\]
and for a grid of small radii $C>0$ we compute the Watson--Holmes robust posterior misclassification probability
\[
  e_{\mathrm{rob},n}(C;G_n)
  \;=\; \sup_{Q:\,\mathrm{KL}(Q\|\Pi_n)\le C}
         \mathbb{E}_Q\bigl[\mathbbm{1}\{a(G_n)\neq M\}\bigr]
\]
via entropic tilting of the two posterior weights, as in Section~\ref{sec:computation}.  Averaging $e_{0,n}(G_n)$ and $e_{\mathrm{rob},n}(C;G_n)$ over Monte Carlo replicates yields empirical baseline and robust risks
$R_{0,n} = \mathbb{E}[e_{0,n}(G_n)]$ and
$R_{\mathrm{rob},n}(C) = \mathbb{E}[e_{\mathrm{rob},n}(C;G_n)]$.\\

The generic small--radius expansion of Theorem~S.2 gives, for bounded losses,
\[
  R_{\mathrm{rob},n}(C)
  \;\approx\;
  R_{0,n} + \sqrt{2\,\mathrm{Var}_{\Pi_n}\!\bigl(L(a(G_n),M)\bigr)}\,\sqrt{C},
  \qquad C\downarrow 0.
\]
In the two--point test, this variance equals $R_{0,n}(1-R_{0,n})$, so the leading $\sqrt{C}$ coefficient is proportional to $\sqrt{R_{0,n}(1-R_{0,n})}$.  To visualize this, we normalize the robust excess misclassification probability as
\[
  C \;\longmapsto\;
  \frac{R_{\mathrm{rob},n}(C) - R_{0,n}}
       {\sqrt{2\,R_{0,n}(1-R_{0,n})}\,\sqrt{C}},
\]
which should be approximately flat for small~$C$, with level given by a finite constant depending on $(n,\lambda)$.

\paragraph{Synthetic experiment B: percolation and exponent~4 in configuration models.}

To illustrate the critical exponent~4 for fragmentation--type indices established in Theorem~\ref{thm:critical-exponent}, we consider configuration models $\mathrm{CM}_n(\mu)$ as in Section~\ref{sec:config-models}, with degree distributions $\mu$ chosen so that the branching factor
\[
  \theta(\mu) \;=\; \frac{\mathbb{E}_\mu[D(D-1)]}{\mathbb{E}_\mu[D]}
\]
satisfies $0 < \Delta(\mu) := 1 - \theta(\mu) \ll 1$.  For concreteness, we take a Poisson family $(\mu_\Delta)$ with
$D_i\sim\text{Poisson}(1-\Delta)$, so that $\theta(\mu_\Delta)=1-\Delta$ and $\Delta(\mu_\Delta)=\Delta$.
For each $(n,\Delta)$, we simulate graphs $G_n\sim\mathrm{CM}_n(\mu_\Delta)$ and place a Gamma$(1,1)$ prior on the Poisson mean; conditioning on $G_n$ and truncating to the subcritical region $\{\lambda<1\}$ yields a conjugate pseudo--posterior $\Pi_n(\mu\mid G_n)$, which we approximate by Monte Carlo draws.  We focus on the susceptibility functional
\[
  R(\mu) \;=\; \frac{1}{1 - \theta(\mu)},
\]
which diverges like $1/\Delta(\mu)$ near the fragmentation threshold and satisfies Assumption~3.1.  Under squared loss $L(a,\mu) = (a - R(\mu))^2$, we compute the Bayes estimator $a_n^\star = \mathbb{E}_{\Pi_n}[R(\mu)]$, the baseline posterior risk
$\rho_{0,n} = \mathbb{E}_{\Pi_n}[(a_n^\star - R(\mu))^2]$, and the corresponding Watson--Holmes robust risk
\[
  \rho_{\mathrm{rob},n}(C)
  \;=\; \sup_{\tilde\Pi:\,\mathrm{KL}(\tilde\Pi\|\Pi_n)\le C}
         \mathbb{E}_{\tilde\Pi}\bigl[(a_n^\star - R(\mu))^2\bigr].
\]
Theorem~\ref{thm:critical-exponent} predicts that, in the near--critical regime and for small $C$,
\[
  \rho_{0,n} \;\asymp\; \frac{1}{n\,\Delta(\mu)^4},
  \qquad
  \rho_{\mathrm{rob},n}(C)-\rho_{0,n}
  \;\asymp\; \frac{\sqrt{C}}{n\,\Delta(\mu)^4}.
\]

\paragraph{Numerical implementation and results.}

In experiment~A, we fix $c=3$ and consider a sparse regime $p_n = c/n$ with a weak community signal $\lambda = 0.4$.  We simulate graphs of size $n=400$ under the two--point experiment $H_0$ (sparse ER) versus $H_1$ (balanced two--block SBM with known labels and within/between probabilities $p^{\text{in}}_n = (c+\lambda)/n$ and $p^{\text{out}}_n = (c-\lambda)/n$).  We use $N_{\mathrm{rep}}=1000$ Monte Carlo replicates and a logarithmic grid of radii $C\in[10^{-4},10^{-2}]$.  For each replicate, we compute the exact posterior on $\{H_0,H_1\}$, the Bayes rule under 0--1 loss, and the baseline and robust misclassification probabilities $e_{0,n}(G_n)$ and $e_{\mathrm{rob},n}(C;G_n)$ by entropic tilting of the two posterior weights.  Averaging across replicates yields $R_{0,n}$ and $R_{\mathrm{rob},n}(C)$. By the normalization process described above, an empirical robustness sensitivity curve
\[
  C \;\longmapsto\;
  \frac{R_{\mathrm{rob},n}(C) - R_{0,n}}
       {\sqrt{2\,R_{0,n}(1-R_{0,n})}\,\sqrt{C}}.
\]
Panel~(a) of Figure~\ref{fig:synthetic-small-radius} displays this quantity as a function of $\sqrt{C}$.
Over the range $C\in[10^{-4},10^{-2}]$ the curve is essentially flat, taking values between roughly $3.7$ and $4.2$.
This confirms the predicted $\sqrt{C}$ scaling of the robust misclassification risk with the nearly constant level, providing a finite--sample estimate of the leading $\sqrt{C}$ coefficient for this sparse ER vs.\ SBM testing problem.\\

In experiment~B, we fix $n=5000$ and consider Poisson configuration models with means $1-\Delta$ for $\Delta \in\{0.40,0.30,0.25,0.20,0.17,0.15\}$.  For each $\Delta$, we simulate $N_{\mathrm{rep}}=200$ graphs, approximate the truncated Gamma posterior for the mean by $S_{\text{post}}=2000$ draws and compute
$\rho_{0,n}$ and $\rho_{\mathrm{rob},n}(C)$ over the same grid $C\in[10^{-4},10^{-1}]$.  Panel~(b) of Figure~\ref{fig:synthetic-small-radius} plots the normalized robustness sensitivity curve
\[
  C \;\longmapsto\;
  \frac{\rho_{\mathrm{rob},n}(C)-\rho_{0,n}}{\rho_{0,n}\sqrt{C}}
\]
for $(n,\Delta)=(5000,0.2)$.  For small radii, the curve is again close to flat, taking values between about $2.2$ and $2.9$ over the range of $C$, in good agreement with the theoretical coefficient~2 in the squared--loss expansion of Theorem~3.3 and illustrating the predicted $\sqrt{C}$ inflation of the susceptibility risk at fixed distance to criticality.\\

To probe the exponent~4 in $\Delta^{-1}$, we fix a small radius $C\approx 10^{-3}$ and, for each $\Delta$, consider the log--log plots
\[
  -\log \Delta \;\longmapsto\; \log\bigl(n\,\rho_{0,n}\bigr),
  \qquad
  -\log \Delta \;\longmapsto\;
  \log\Bigl(\tfrac{n\,\bigl(\rho_{\mathrm{rob},n}(C)-\rho_{0,n}\bigr)}
                 {\sqrt{C}}\Bigr).
\]
Figure~\ref{fig:config-slopes} shows the resulting regression lines and the corresponding least--squares slopes are
\[
  \widehat{\kappa}_{\mathrm{base}} \approx 4.49,
  \qquad
  \widehat{\kappa}_{\mathrm{rob}} \approx 4.65,
\]
and are also reported in Table~\ref{tab:config-slopes}.  Both slopes are very close to the theoretical exponent~4, predicted by Theorem~\ref{thm:critical-exponent}. The baseline and robust exponents are numerically indistinguishable at the level of Monte Carlo error.  Together with the small--radius sensitivity curves, these synthetic experiments provide a controlled numerical check of (i) the $\sqrt{C}$ expansion of decision--theoretic robust risks and (ii) the exponent--4 blow--up of percolation--type functionals near the fragmentation threshold.

\begin{figure}[H]
  \centering
  \includegraphics[width=\textwidth]{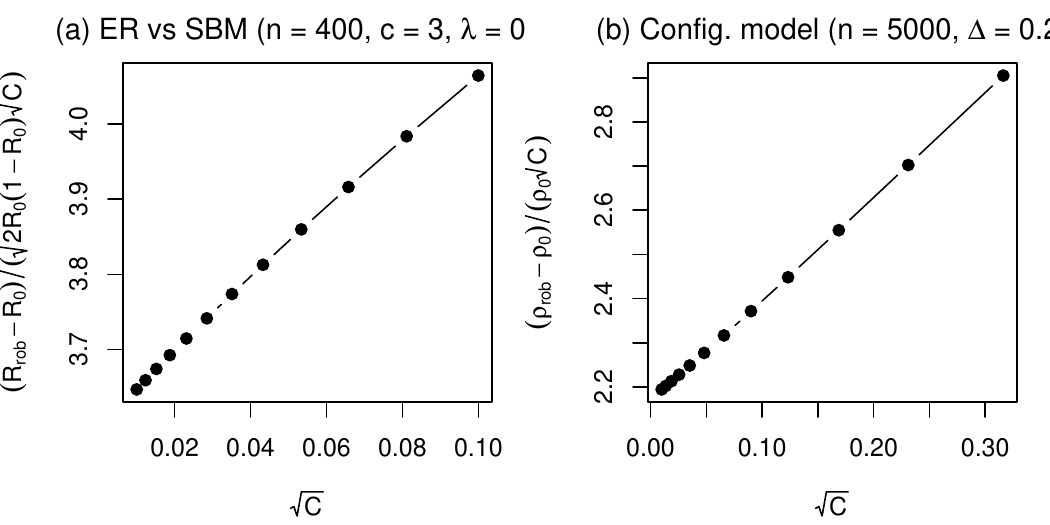}
  \caption{Synthetic small--radius robustness sensitivity curves.
    (a) ER vs.\ two--block SBM test with $n=400$, $c=3$, $\lambda=0.4$:
    normalized excess robust misclassification probability
    $(R_{\mathrm{rob},n}(C)-R_{0,n})/
     (\sqrt{2 R_{0,n}(1-R_{0,n})}\sqrt{C})$
    versus $\sqrt{C}$.
    (b) Configuration model with Poisson degrees of mean $1-\Delta$
    ($n=5000$, $\Delta=0.2$): normalized excess robust susceptibility
    $(\rho_{\mathrm{rob},n}(C)-\rho_{0,n})/(\rho_{0,n}\sqrt{C})$ versus
    $\sqrt{C}$.  Dashed horizontal lines mark reference levels corresponding to the leading $\sqrt{C}$ coefficients
predicted by Theorem~S.2 (panel~a) and Theorem~3.3 (panel~b).}
  \label{fig:synthetic-small-radius}
\end{figure}

\begin{figure}[H]
  \centering
  \includegraphics[width=\textwidth]{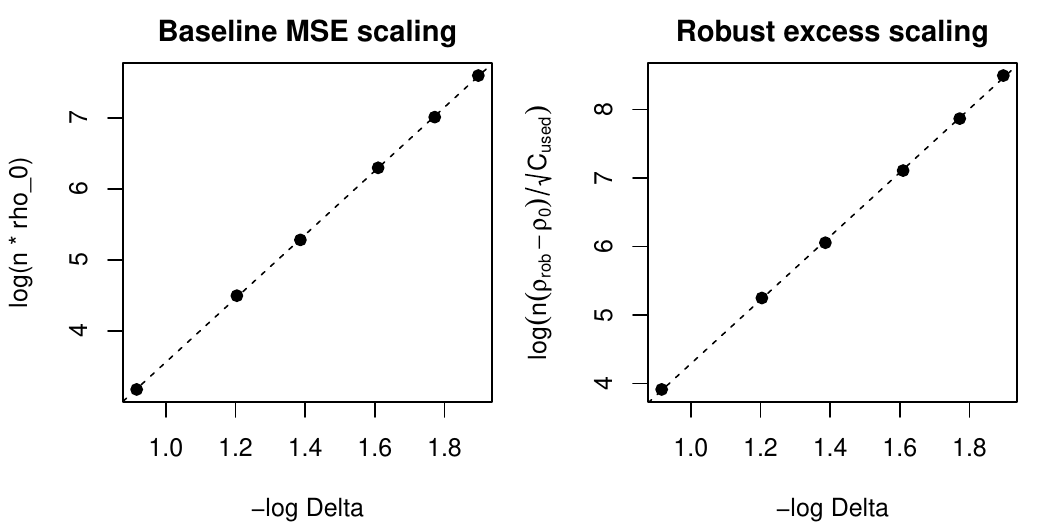}
  \caption{Configuration--model susceptibility: log--log scaling in the
    distance to criticality.  Left: $-\log\Delta \mapsto \log(n\,\rho_{0,n})$
    with fitted least--squares line (dashed); right:
    $-\log\Delta \mapsto
      \log\bigl(n(\rho_{\mathrm{rob},n}(C)-\rho_{0,n})/\sqrt{C}\bigr)$
    at $C\approx 10^{-3}$.  The regression slopes
    $\widehat{\kappa}_{\mathrm{base}} \approx 4.49$ and
    $\widehat{\kappa}_{\mathrm{rob}} \approx 4.65$ are close to the
    theoretical exponent~4 from Theorem~\ref{thm:critical-exponent}.}
  \label{fig:config-slopes}
\end{figure}

\begin{table}[H]
  \centering
  \begin{tabular}{lcc}
    \toprule
    & baseline exponent & robust exponent \\
    \midrule
    configuration model susceptibility ($n=5000$) & $4.49$ & $4.65$ \\
    \bottomrule
  \end{tabular}
  \caption{Estimated slopes of
    $-\log\Delta \mapsto \log(n\,\rho_{0,n})$ and
    $-\log\Delta \mapsto
      \log\bigl(n(\rho_{\mathrm{rob},n}(C)-\rho_{0,n})/\sqrt{C}\bigr)$
    in the configuration--model experiment for
    $\Delta\in\{0.40,0.30,0.25,0.20,0.17,0.15\}$ and
    $C\approx 10^{-3}$.  Theory predicts a common exponent~$4$; the empirical
    estimates $\widehat{\kappa}_{\mathrm{base}}$ and
    $\widehat{\kappa}_{\mathrm{rob}}$ are close to this value.}
  \label{tab:config-slopes}
\end{table}

\paragraph{Synthetic experiment C: Misspecification stress test (DCSBM truth, SBM working model).}
We generated networks from a degree-corrected stochastic block model (DCSBM) with mild community
structure but strong degree heterogeneity, then fit a misspecified plain SBM. We focus on the
leading eigenvalue $\lambda_1$ (and the associated epidemic-threshold proxy), and apply KL-ball
robustification via exponential tilting of posterior draws.\\

To calibrate the threshold decision
$\mathsf{Intervene}\{\mathbb{P}(\lambda_1>\tau) > p_\star\}$ in a nontrivial regime, we set $\tau$
using a pilot procedure that caps $\tau$ to lie within the working posterior support:
$\tau = \min\{q_{0.60}(\lambda_{1,\mathrm{true}}),\ q_{0.98}(\lambda_{1,\mathrm{work}})\}$.
In the pilot, the true exceedance probability was large while the working posterior essentially
ruled it out, indicating severe misspecification.
Because KL-tilting only reweights working-model draws, it cannot create support where the baseline
posterior assigns (near) zero mass; the pilot cap mitigates this degeneracy.

\begin{table}[H]
\centering
\caption{Pilot calibration illustrates misspecification for the event $\{\lambda_1>\tau\}$.}
\label{tab:pilot-calibration}
\begin{tabular}{lcc}
\toprule
Quantity & Value & Interpretation \\
\midrule
$\tau_{\text{truth}} = q_{0.60}(\lambda_{1,\text{true}})$ & 10.274 & target nontrivial truth regime \\
$\tau_{\text{cap}} = q_{0.98}(\lambda_{1,\text{work}})$  & 10.041 & cap to working posterior support \\
Chosen $\tau=\min(\tau_{\text{truth}},\tau_{\text{cap}})$ & 10.041 & threshold used in decision \\
$P_{\text{true}}(\lambda_1>\tau)$ (pilot) & 0.67 & frequent exceedance under DCSBM \\
$P_{\text{work}}(\lambda_1>\tau)$ (pilot) & 0.02 & working SBM nearly rules out exceedance \\
$p_\star=\text{cost}_{\text{int}}/\text{cost}_{\text{out}}$ & 0.20 & decision probability cutoff \\
\bottomrule
\end{tabular}
\end{table}

Figure~\ref{fig:misspec-main} shows that increasing the KL radius $C$ moves the robustified posterior
toward higher values of $\lambda_1$ and substantially reduces mean-squared error (MSE) relative to the
DCSBM truth, consistent with the SBM posterior being biased downward due to ignored degree heterogeneity.
For the threshold policy, robustification reduces regret at moderate radii by hedging against false
negatives induced by the misspecified working posterior.

\begin{figure}[H]
\centering
\begin{tabular}{@{}cc@{}}
  \includegraphics[width=0.48\textwidth]{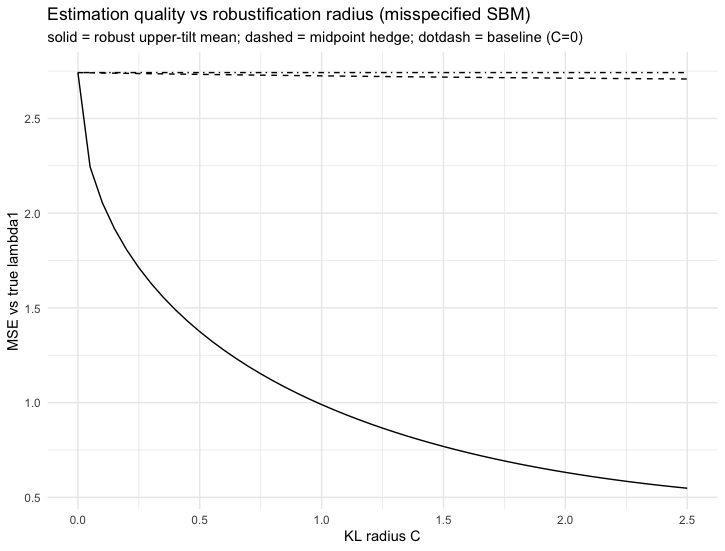} &
  \includegraphics[width=0.48\textwidth]{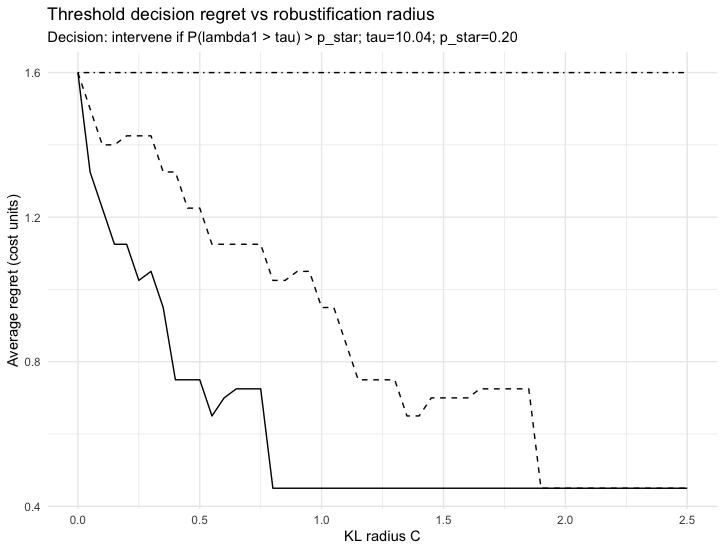}
\end{tabular}
\caption{\textbf{Misspecification stress test (DCSBM truth, SBM working model).}
Robustification is performed by exponential tilting within a KL ball of radius $C$ around the working posterior draws.
\emph{Left:} Estimation performance (MSE) vs.\ KL radius $C$ for $\lambda_1$.
\emph{Right:} Threshold-decision regret vs.\ KL radius $C$; robustification lowers regret by hedging against
false negatives under SBM misspecification.}
\label{fig:misspec-main}
\end{figure}

\paragraph{Synthetic experiment D: Radius paths and error exponents (ER vs.\ SBM).}
We provide additional numerical evidence for the information--theoretic message of Section~4 in the
labeled sparse Erd\H{o}s--R\'enyi versus two-block SBM test: the robustified risk obeys a \emph{local}
small-radius expansion only under genuinely local KL radii, and the \emph{radius path} \(\mathcal{C}_n\) governs
whether robustness preserves or destroys exponential error decay.\\

For each \(n\) and signal strength \(\lambda\), we simulate a labeled sparse graph under \(H_0\) (ER)
and \(H_1\) (balanced two-block SBM with \(p_{\mathrm{in}}=(c+\lambda)/n\) and \(p_{\mathrm{out}}=(c-\lambda)/n\)),
use the Bayes decision rule under equal priors, and record the posterior misclassification probability
\(e_0(G_n)\).
We then robustify the posterior in a KL ball of radius \(C\) and compute the corresponding worst-case
misclassification probability \(e_{\mathrm{rob}}(C;G_n)\) (two-point case; solved exactly via the
Bernoulli KL constraint).\\

\emph{Local small-radius regime (Panel~a).}
Panel~(a) reports the replicate-normalized quantity
\[
\mathbb{E}\!\left[\,
\frac{e_{\mathrm{rob}}(C;G_n)-e_0(G_n)}
{\sqrt{2\,e_0(G_n)\bigl(1-e_0(G_n)\bigr)}\,\sqrt{C}}
\,\right],
\]
which equals \(1+o(1)\) as \(C\downarrow 0\) for each fixed \((n,\lambda)\) under the local
\(\sqrt{C}\)-expansion.
For moderate signal (\(\lambda=0.2\)), the normalization stays close to \(1\) on the smallest radii shown:
for \(n=200\), it ranges from \(1.0006\) to \(1.0087\) over \(\sqrt{C}\in[0.002,0.03]\); for \(n=800\), it ranges
from \(1.0035\) to \(1.0488\); and for \(n=1600\) it remains below \(1.28\) on the displayed grid.
In contrast, for stronger signal and larger \(n\) the same \(\sqrt{C}\)-grid becomes \emph{nonlocal} relative
to the posterior: for \(\lambda=0.4\) the ratio increases from roughly \(1.00\)–\(1.05\) at \(n=200\), to
\(1.02\)–\(1.29\) at \(n=400\), to \(1.63\)–\(6.95\) at \(n=800\), and then explodes to
\(1.2\times 10^3\)–\(1.4\times 10^4\) at \(n=1600\). This is the rare-error regime in which \(e_0(G_n)\) is
already extremely small, so “small radius’’ must shrink with \(n\) (and signal) for the local expansion
to apply at the decision level.\\

\emph{Error exponents and radius paths (Panels~B--C).}
Panel~(B) plots the empirical exponent estimate \(-\log(R)/n\), where \(R=\mathbb{E}[e(G_n)]\), for the
baseline Bayes risk and for an exponentially shrinking radius path \(\mathcal{C}_n=\exp(-2\alpha n)\).
Quantitatively, for \(\lambda=0.2\) the baseline and exponential-radius robust exponents become essentially
indistinguishable at large \(n\): at \(n=6400\) they are \(0.0012021\) (baseline) versus \(0.0012008\) (robust),
and at \(n=12800\) they are \(0.0014373523\) (baseline) versus \(0.0014372875\) (robust).
For \(\lambda=0.1\), the corresponding large-\(n\) values are \(3.568\times 10^{-4}\) (baseline) and
\(3.430\times 10^{-4}\) (robust) at \(n=12800\), i.e.\ a small relative gap of about \(4\%\).
For \(\lambda=0.3\), baseline and robust agree closely through \(n=6400\) (both \(\approx 2.654\times 10^{-3}\)),
while at the largest \(n\) the baseline exponent estimate becomes noticeably more variable (e.g.\
\(0.00448\) at \(n=12800\)), consistent with finite-sample/Monte-Carlo instability once risks are extremely
small; the robust exponent remains stable around \(2.69\times 10^{-3}\).
Across \(\lambda\), the robust exponent does not exceed the baseline exponent, in line with the fact that
robustification cannot improve the information exponent predicted in Section~4.\\

Panel~(C) contrasts several radius scalings at \(\lambda=0.2\). Exponentially small radii preserve exponential
decay: at \(n=12800\), the baseline has \(R\approx 5.40\times 10^{-9}\) and rate \(0.0014873\), while
\(\mathcal{C}_n=\exp(-2\alpha n)\) yields \(R\approx 5.41\times 10^{-9}\) and rate \(0.0014872\) (agreement at four
significant digits in the exponent).
By contrast, polynomial/constant radii yield subexponential behavior and a collapsing per-node exponent:
at \(n=12800\), the constant-radius regime has \(R\approx 1.93\times 10^{-3}\) and rate \(4.88\times 10^{-4}\),
while the \(\kappa/n\) regime has \(R\approx 4.02\times 10^{-6}\) and rate \(9.71\times 10^{-4}\).
The regime \(\mathcal{C}_n \propto n\) saturates the KL budget and yields \(R_{\mathrm{rob}}\approx 1\) (rate \(0\)),
providing a sanity check.\\


\begin{figure}[H]
\centering
\begin{tabular}{@{}cc@{}}
  \includegraphics[scale=0.30]{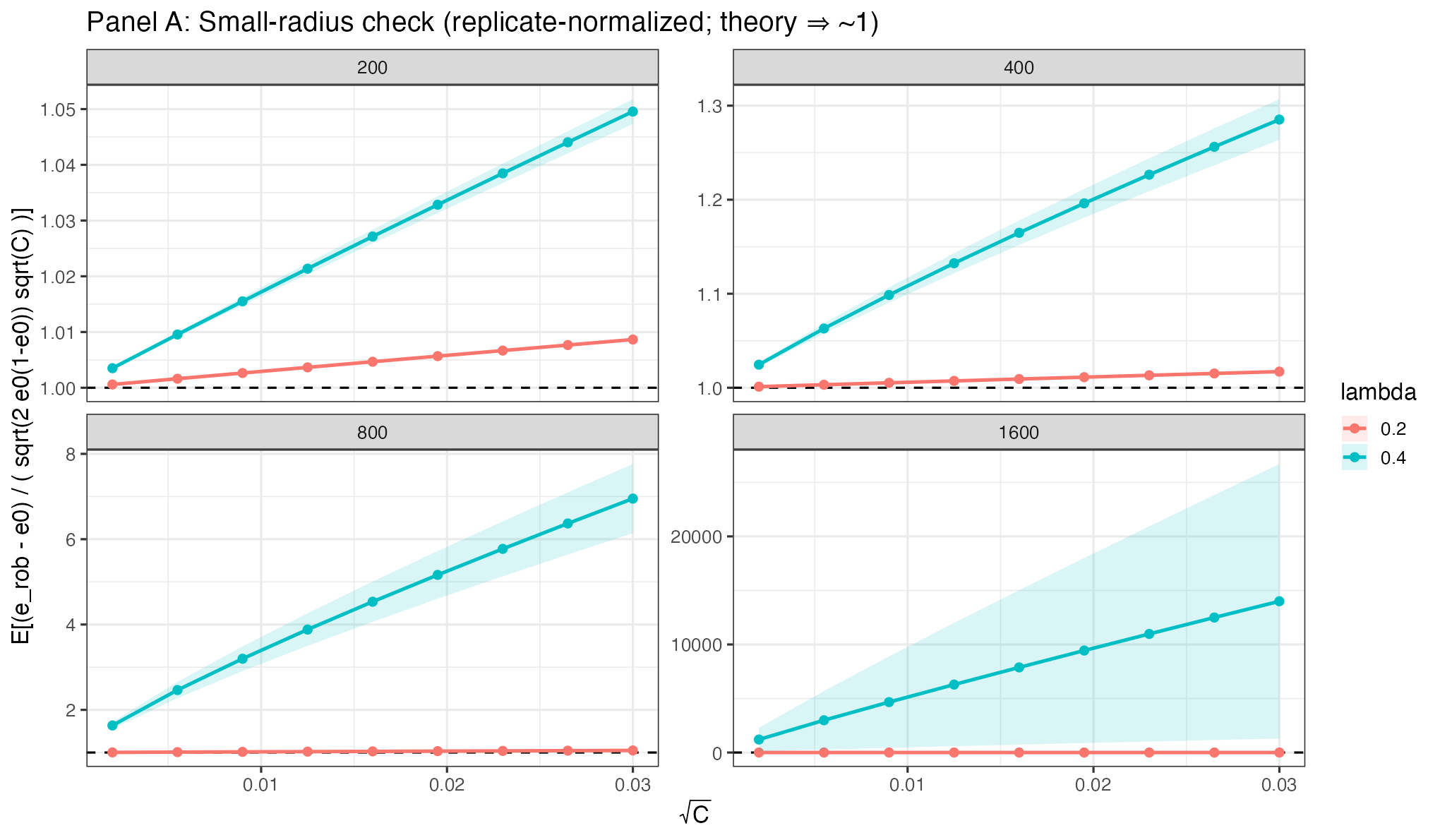}
  \includegraphics[scale=0.30]{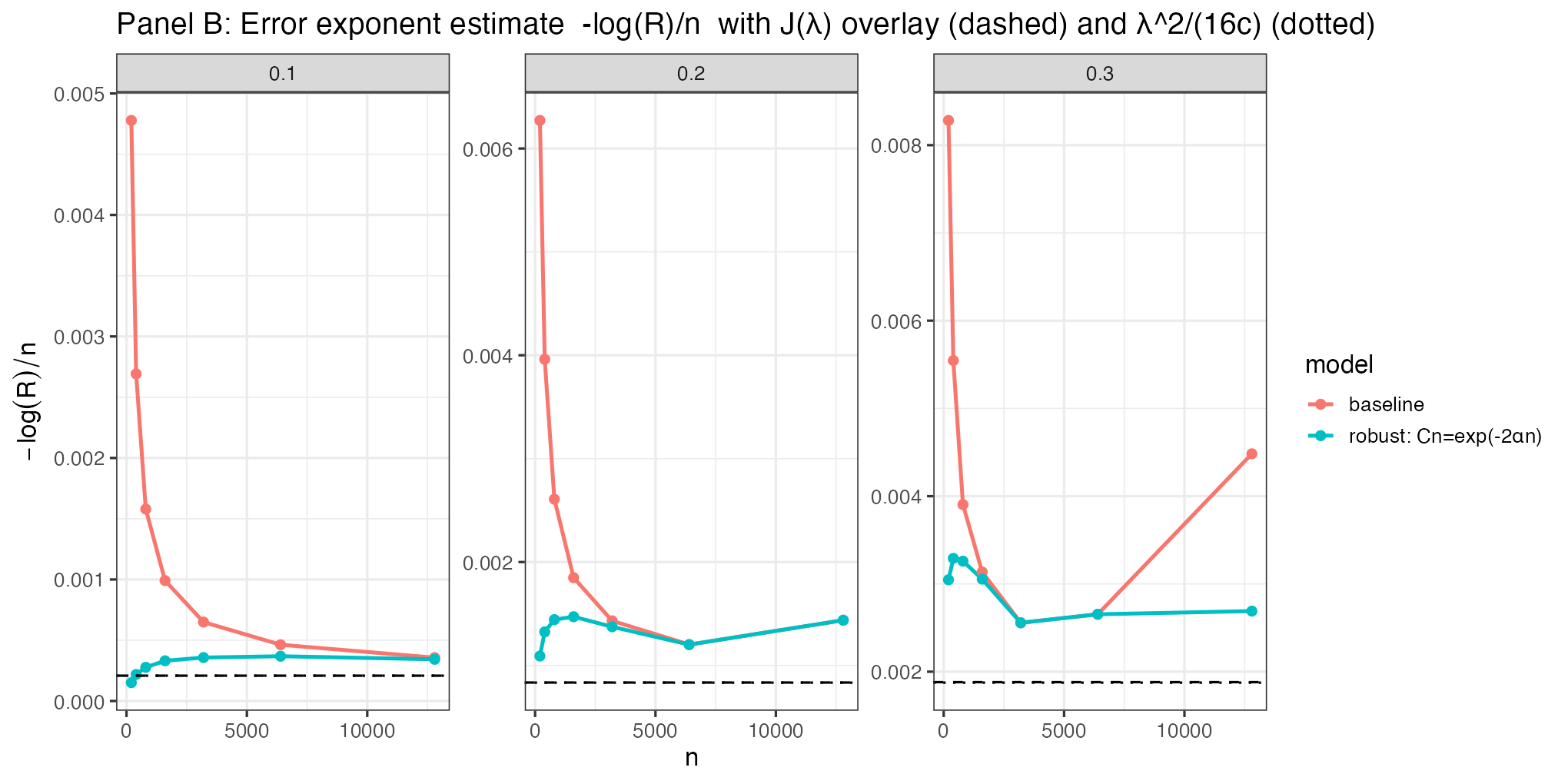}\\
  \includegraphics[scale=0.30]{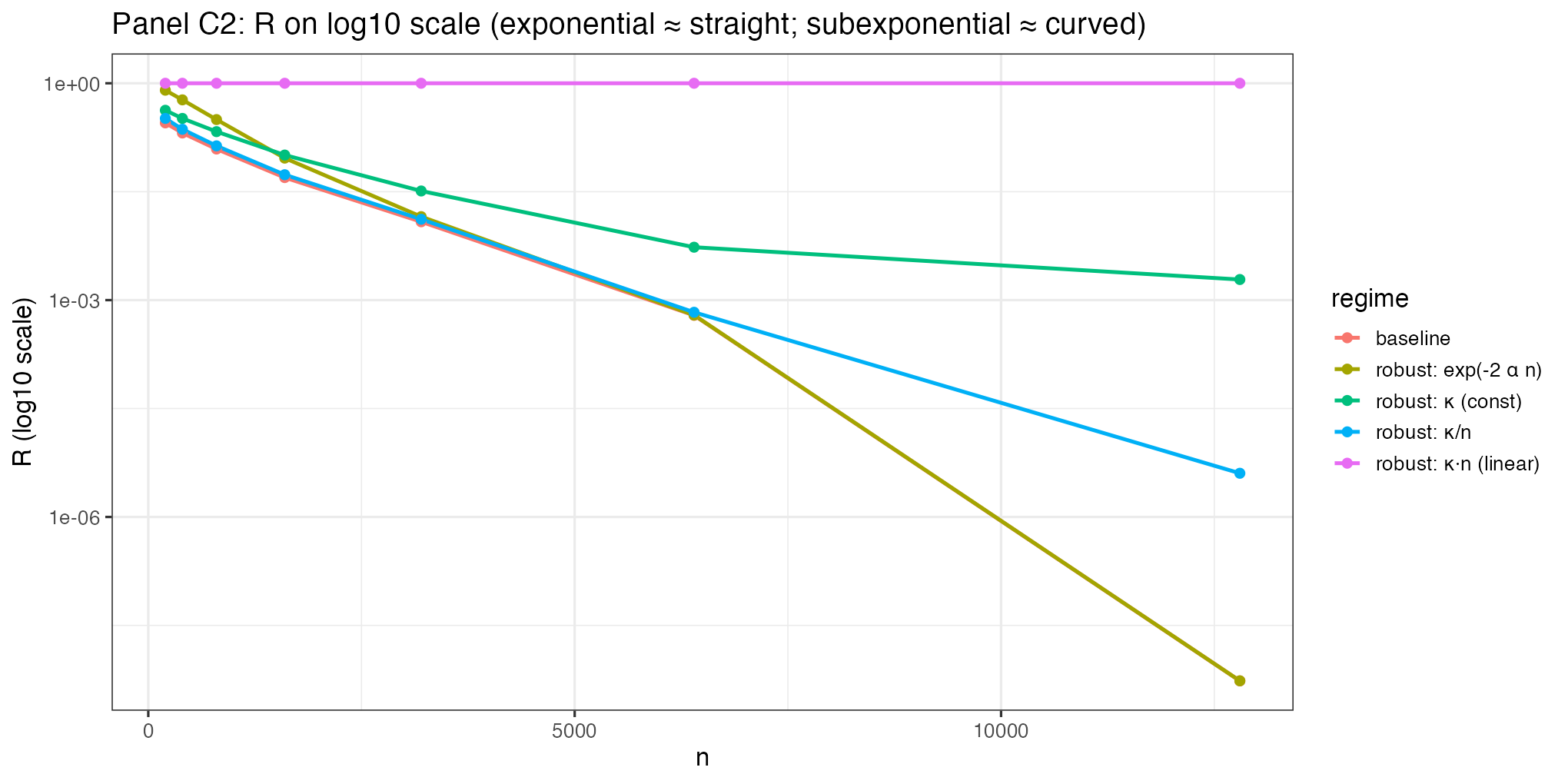}
\end{tabular}
\caption{A) Left Up: Local \(\sqrt{C}\) normalization. Deviations at large \(n\) and strong \(\lambda\) indicate a nonlocal radius grid. B) Right Up: Exponent estimate \(-\log(R)/n\) vs.\ \(n\) for baseline and \(\mathcal{C}_n=\exp(-2\alpha n)\). C) Down:Effective exponent across radius scalings (exponent collapse under polynomial/constant radii).Synthetic ER vs.\ SBM validation of locality and exponent behavior under posterior KL-robustification. The key qualitative prediction is that exponentially shrinking radii can preserve an exponential rate, while polynomial/constant radii destroy the exponent.}
\label{fig:synthetic-radius-exponents}
\end{figure}




\subsection{Experiment 2: Robust network functionals and model selection in brain connectivity networks}
\label{subsec:brain-experiment}
\label{sec:brain-experiment} 

\paragraph{Data and working models.}
We consider a population of resting--state functional connectivity networks from a case--control
study.  For each of $n_{\text{subj}}$ individuals, we observe $m$ scans, each represented as an
undirected, unweighted graph on a common set of $p$ brain regions (nodes), obtained by
thresholding absolute pairwise correlations between regional time series.

For each scan, we compare two closely related \emph{stochastic block model} (SBM) working models,
\[
  \mathcal{M}_1 = \mathrm{SBM}(K_1), \qquad 
  \mathcal{M}_2 = \mathrm{SBM}(K_2),
\]
with $K_1 = 2$ and $K_2 = 3$ blocks.  Model $\mathcal{M}_k$ has parameters
$\theta_k = (\pi^{(k)}, B^{(k)})$, where $\pi^{(k)}$ are community proportions and
$B^{(k)}$ is a $K_k \times K_k$ matrix of within-- and between--community edge
probabilities.  We place simple conjugate priors on $(\pi^{(k)},B^{(k)})$ for each $k$ and
approximate the marginal likelihood of $\mathcal{M}_k$ via BIC.

To avoid degenerate posterior weights when the two SBMs fit almost equally well, we work with a
tempered BIC--based pseudo--posterior,
\[
  w_k \;\propto\; \exp\!\Bigl\{ - \tfrac{1}{2}\,\tau\,\mathrm{BIC}(\mathcal{M}_k) \Bigr\},
  \qquad k=1,2,
\]
with temperature $\tau = 0.25$.  Normalizing $\bm w=(w_1,w_2)$ yields the baseline posterior
model probabilities
$p_k = \Pi_0(M=\mathcal{M}_k \mid \text{scan})$, $k=1,2$.

\paragraph{Network functionals and decisions.}
For each scan and each model $\mathcal{M}_k$, we consider the vector of network functionals
\[
  R_k(\theta_k)
  =
  \bigl( C_k(\theta_k),\, L_k(\theta_k),\, S_k(\theta_k),\, \lambda_{1,k}(\theta_k) \bigr),
\]
where
\begin{itemize}
  \item $C_k(\theta_k)$ is the global clustering coefficient,
  \item $L_k(\theta_k)$ is the average shortest path length,
  \item $S_k(\theta_k)$ is a small--world index, e.g.
        $S_k = (C_k/C_{\text{rand}})/(L_k/L_{\text{rand}})$ with
        $(C_{\text{rand}},L_{\text{rand}})$ computed from a simple random reference graph, and
  \item $\lambda_{1,k}(\theta_k)$ is the leading eigenvalue of the expected adjacency matrix.
\end{itemize}

We study two decisions:
\begin{enumerate}
  \item a \emph{model selection decision}
        $a \in \{\mathrm{SBM}(K_1), \mathrm{SBM}(K_2)\}$ under 0--1 loss, where the baseline
        Bayes action chooses the SBM with larger tempered posterior weight $p_k$; and
  \item a \emph{functional classification decision}, for example deciding whether
        $S_k(\theta_k) > S_0$ for a pre--specified threshold $S_0$, interpreted as evidence of
        small--world structure.
\end{enumerate}
For each scan, we work with the joint baseline posterior $\Pi_0$ on $(M,\theta_M)$ and compute
the corresponding posterior risk $\rho_0(a)$ for the decisions above.

\paragraph{Robustness set--up.}
For the brain experiment, we restrict attention to KL neighborhoods of $\Pi_0$.
For a given KL radius $C>0$, we consider the uncertainty set
\[
  \mathcal{U}_C(\Pi_0)
  =
  \bigl\{
    \tilde\Pi : \mathrm{KL}(\tilde\Pi \,\|\, \Pi_0) \le C
  \bigr\},
\]
and compute the least--favorable entropic tilt $\tilde\Pi_C$ for each action $a$.
This yields the robust posterior risk
\[
  \rho_{\text{rob}}(C,a)
  =
  \sup_{\tilde\Pi \in \mathcal{U}_C(\Pi_0)}
    \mathbb{E}_{\tilde\Pi}\bigl[ L(a,\theta_M,M) \bigr].
\]
For the model selection decision we record the \emph{switching radius}
$C^\star$ at which the Bayes choice changes between $\mathrm{SBM}(K_1)$ and
$\mathrm{SBM}(K_2)$, together with the normalized small--radius sensitivity curve
$C \mapsto \{\rho_{\text{rob}}(C,a)-\rho_0(a)\}/\sqrt{C}$.\\

Across $n_{\text{scan}} = 124$ scans the observed networks display pronounced
small--world structure (Table~\ref{tab:brain-summary}).  The clustering
coefficient is high (median $C = 0.381$) and the average path length short
(median $L = 1.850$), with a small--world index $S$ typically around $1.6$.
The leading eigenvalue of the adjacency matrix is also fairly large, with median
$\lambda_1 \approx 56$.\\

The tempered posterior mass on the more flexible $\mathrm{SBM}(K_2)$ model is
substantial but not degenerate: the median tempered probability is
$p_{\mathrm{SBM}(K_2),\tau} = 0.724$ with interquartile range
$[0.683,\,0.788]$.  Robust model selection is more delicate.  The switching
radius $C^\star$ has median $0.112$ and interquartile range $[0.072,\,0.201]$,
so that for roughly half of the scans, relatively small perturbations of the
posterior (in KL distance) are sufficient to reverse the preferred number of
blocks.\\

Figure~\ref{fig:brain-small-radius} (left) shows the normalized robustness
sensitivity curve for a representative scan.  The curve is close to the
theoretical small--radius slope over a range of $C$, indicating that local
asymptotics provide a good approximation in this example.  The right panel of
Figure~\ref{fig:brain-small-radius} plots the observed small--world index $S$
against the tempered posterior probability of the three--block SBM; scans with
more extreme small--world behavior (larger $S$) tend to place higher posterior
mass on $\mathrm{SBM}(3)$, although there is non-negligible variation.\\

Figure~\ref{fig:brain-CL} summarizes the small--world properties at the scan
level by plotting $(C,L)$ for each network, colored by the preferred SBM
($K_1$ versus $K_2$).  Almost all scans lie in a region with high clustering and
short paths, and both SBMs yield networks with broadly similar global
functionals.  The robustness calculations therefore probe a subtle model choice
problem—how much extra structure beyond a two--block partition is really needed
to explain the connectivity data—rather than a gross misfit of the SBM family.

\begin{table}[H]
  \centering
  \caption{Brain connectivity experiment: summary of network functionals, tempered
  model probabilities and robustness across $n_{\text{scan}} = 124$ scans.
  Entries are median [first quartile, third quartile] across scans.}
  \label{tab:brain-summary}
  \begin{tabular}{lc}
    \toprule
    Quantity & Median [Q1, Q3] \\
    \midrule
    Global clustering $C$ 
      & $0.381\ [0.357,\ 0.412]$ \\
    Average path length $L$
      & $1.850\ [1.821,\ 1.872]$ \\
    Small--world index $S$
      & $1.645\ [1.547,\ 1.710]$ \\
    Leading eigenvalue $\lambda_1$
      & $56.1\ [50.9,\ 63.6]$ \\
    Tempered $p_{\mathrm{SBM}(K_2),\tau}$
      & $0.724\ [0.683,\ 0.788]$ \\
    Switching radius $C^\star$
      & $0.112\ [0.072,\ 0.201]$ \\
    \bottomrule
  \end{tabular}
\end{table}

\begin{figure}[H]
  \centering
  \includegraphics[width=.9\textwidth]{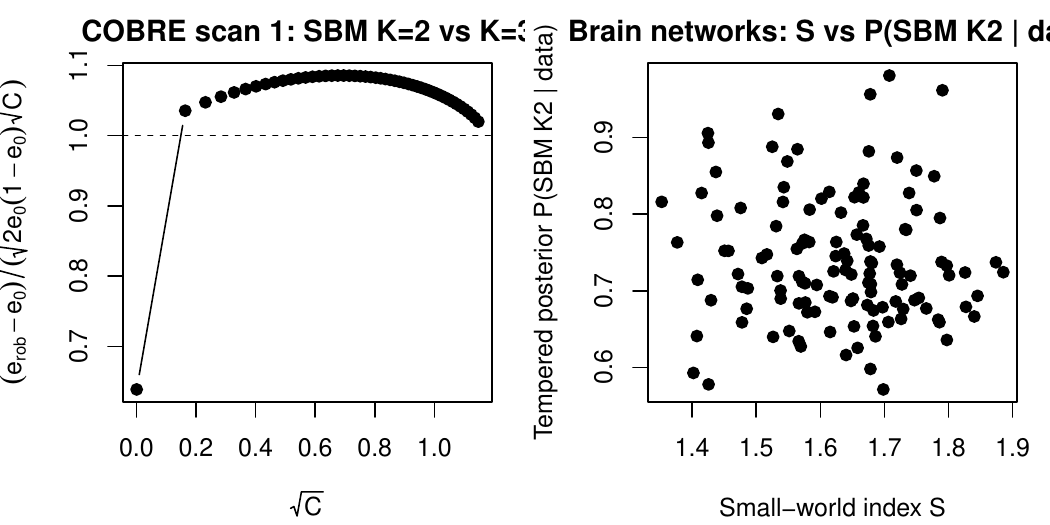}
  \caption{Brain connectivity experiment, SBM($K_1$) vs SBM($K_2$).
  \emph{Left:} normalized robust risk increase 
  $(\rho_{\text{rob}}(C)-\rho_0)/\sqrt{C}$ for the model--selection decision
  in a representative scan, plotted against $\sqrt{C}$.
  \emph{Right:} observed small--world index $S$ versus tempered posterior
  probability $p_{\mathrm{SBM}(K_2),\tau}$ across scans.}
  \label{fig:brain-small-radius}
\end{figure}

\begin{figure}[H]
  \centering
  \includegraphics[width=.55\textwidth]{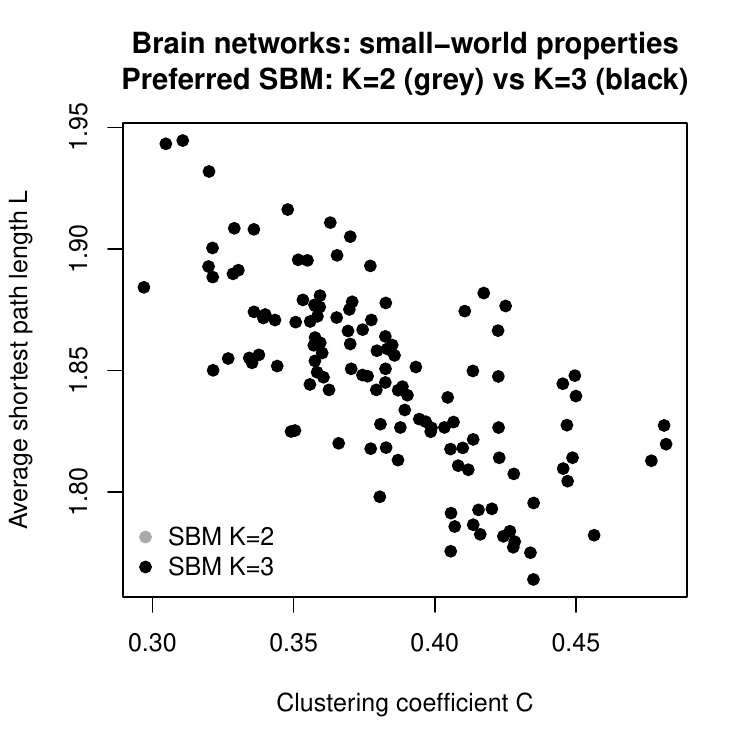}
  \caption{Brain connectivity experiment: global clustering $C$ versus average
  path length $L$ for all scans.  Points are colored by the preferred SBM
  under the tempered posterior (lighter dots favor $\mathrm{SBM}(K_1)$, darker
  dots favor $\mathrm{SBM}(K_2)$).}
  \label{fig:brain-CL}
\end{figure}

\paragraph{SBM versus a latent space RDPG model.}
To check that these conclusions do not hinge on the comparison of only closely related SBMs, we also
benchmark the three--block SBM against a random dot product graph (RDPG) working model with
latent dimension $d=3$, matching the number of SBM blocks.  For each scan, we compute a tempered
BIC--based pseudo--posterior on
\[
  \{M = \mathrm{SBM}(K=3),\; M = \mathrm{RDPG}(d=3)\},
\]
using the same tempering scheme as above, and then form the 0--1 loss model--selection risk
$\rho_0(a)$ and its KL--robustification.  In contrast to the ambiguous $\mathrm{SBM}(2)$ vs
$\mathrm{SBM}(3)$ comparison, the latent space model is overwhelmingly disfavored: across all
$124$ scans the tempered posterior mass on the RDPG is numerically negligible
($p_{\mathrm{RDPG}} \ll 10^{-40}$, often underflowing to zero), so that the three--block SBM
is selected with probability one to machine precision.  Because the baseline misclassification
risk is essentially zero in every scan, the corresponding switching radii $C^\star$ all take the
same value $C^\star \approx 6.21$, which is the KL distance required to move a Bernoulli risk
from $e_0 = 10^{-6}$ to $1/2$ under the closed--form expression
$C^\star(e_0) = \mathrm{KL}(\mathrm{Bern}(1/2)\,\|\,\mathrm{Bern}(e_0))$.  In other words, one
would need an enormous departure from the baseline posterior---far outside the local misspecification
regime considered in our theory---before the RDPG could become optimal.  The left panel of
Figure~\ref{fig:brain-sbm-rdpg-small-radius} shows that the normalized robustness curve for a
representative scan is essentially flat at zero, reflecting this near--degenerate model choice,
while the right panel confirms that posterior mass on the RDPG remains close to zero even for
scans with the most pronounced small--world behavior.  Figure~\ref{fig:brain-sbm-rdpg-CL}
further shows that the $(C,L)$ cloud is virtually unchanged when coloring points by the
preferred model (SBM versus RDPG), reinforcing that the SBM family already captures the global
functional structure of these networks. Adding a latent space does not yield a
competitive alternative in terms of marginal likelihood or robust risk.

\begin{figure}[H]
  \centering
  \includegraphics[width=.9\textwidth]{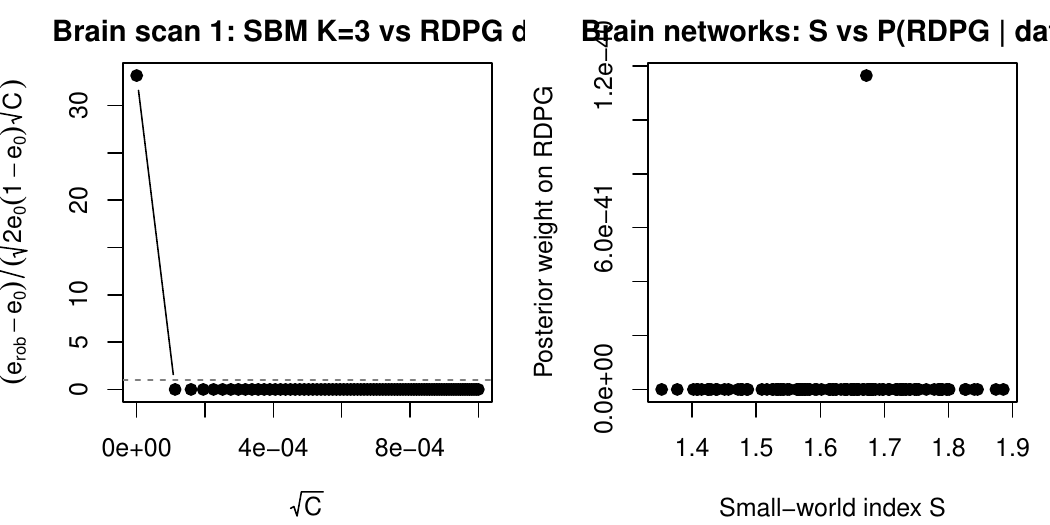}
  \caption{Brain connectivity experiment, $\mathrm{SBM}(3)$ vs $\mathrm{RDPG}(d=3)$.
  \emph{Left:} normalized robust risk increase for the model--selection decision in a
  representative scan, plotted against $\sqrt{C}$.  The curve is essentially flat at zero,
  reflecting the vanishing baseline misclassification risk.  \emph{Right:} observed small--world
  index $S$ versus tempered posterior probability $p_{\mathrm{RDPG}}$ across scans; posterior
  mass on the RDPG is numerically negligible for all networks.}
  \label{fig:brain-sbm-rdpg-small-radius}
\end{figure}

\begin{figure}[H]
  \centering
  \includegraphics[width=.55\textwidth]{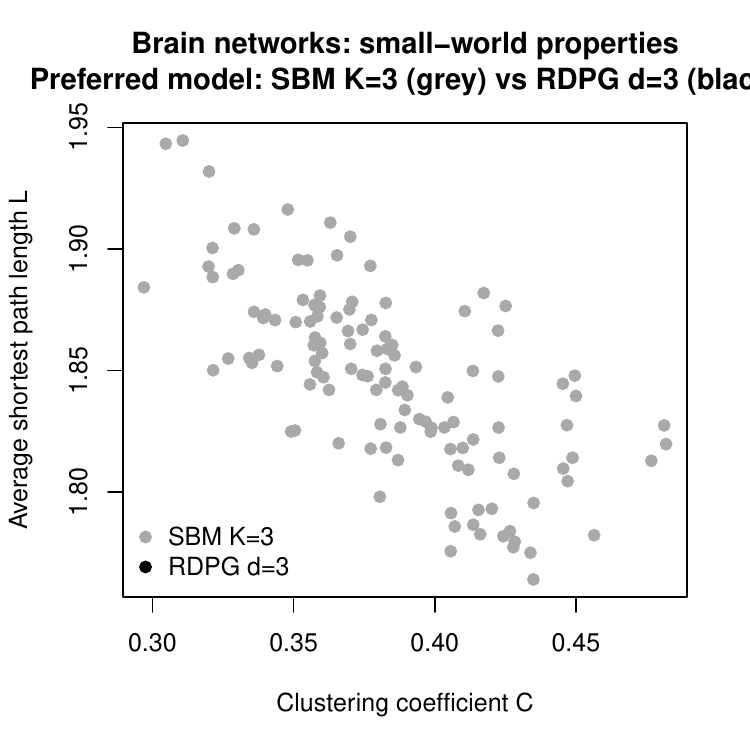}
  \caption{Brain connectivity experiment: global clustering $C$ versus average path length $L$
  for all scans, colored by the preferred model under the tempered posterior (lighter dots
  favor $\mathrm{RDPG}(d=3)$, darker dots favor $\mathrm{SBM}(3)$).  Almost all scans fall in
  the high--clustering, short--path region and overwhelmingly support the three--block SBM.}
  \label{fig:brain-sbm-rdpg-CL}
\end{figure}

\subsection{Experiment 3: Robust structure and assortativity in Karnataka village networks}
\label{subsec:karnataka-experiment}
\label{sec:karnataka-experiment} 

\paragraph{Data and working models.}
We use Wave~1 village social network data from rural Karnataka, India.
Each village $v$ yields an undirected, unweighted household--level network
$G_v$. Nodes are households and an edge is present if the households report
at least one type of social interaction (borrowing, advice, social visits,
etc.).  For each village, we aggregate all interaction layers into a single
network.  Across the $n_v = 75$ villages the number of households ranges
from about $350$ to $1{,}800$.  The networks are sparse but highly
clustered (Table~\ref{tab:karnataka-summary}): the median global clustering
coefficient is
$C = 0.375\,[0.343, 0.429]$, the median mean shortest--path length is
$L = 4.10\,[3.91, 4.36]$, and the resulting small--world index is large,
$S = 30.3\,[25.5, 37.2]$.  The median leading eigenvalue of the adjacency
matrix is $\lambda_1 = 15.9\,[14.4, 18.9]$.

For each village network, we consider two working models:
\begin{enumerate}
  \item a sparse Erd\H{o}s--R\'enyi model with edge probability $p_v$; and
  \item a three--block stochastic block model (SBM) with fixed
        block membership $z^{(v)}$, obtained from a modularity--based
        clustering of $G_v$, and block--level connection probabilities
        $B^{(v)}_{kl}$.
\end{enumerate}
In both cases, we place independent Beta priors on the edge probabilities
(and on the entries of $B^{(v)}$), and perform posterior computations
village by village.  As a robustness check within the SBM class, we also
compare spectral SBMs with $K=2$ and $K=3$ blocks; this yields very
similar Watson--Holmes conclusions (Fig.~\ref{fig:karnataka-sbm23}) and is
reported only briefly here.

\paragraph{Network functionals and decisions.}
From the SBM for village $v$, we extract the block--level connection
matrix $B^{(v)}(\theta)$ and two measures of assortativity:
\begin{itemize}
  \item the within-- versus between--block density contrast
        $\Delta^{(v)}(\theta)
           = \overline{B}^{(v)}_{\text{within}} -
             \overline{B}^{(v)}_{\text{between}}$; and
  \item the modularity $Q^{(v)}(\theta)$ with respect to the chosen
        partition.
\end{itemize}
Across villages, the posterior point estimates $\hat\Delta^{(v)}$ are
small and positive, with median
$\hat\Delta^{(v)} \approx 0.011\,[0.009, 0.013]$
(Fig.~\ref{fig:karnataka-er-sbm}a), indicating only mild block structure.
For the spectral SBM analysis, the corresponding contrast is smaller still
(median $0.003\,[0.002, 0.004]$).

We frame a binary decision
\[
  a^{(v)}_{\text{struc}} =
  \begin{cases}
    \text{``strong assortativity''} & \text{if } \Delta^{(v)}(\theta) > \Delta_0, \\
    \text{``weak/none''}            & \text{otherwise,}
  \end{cases}
\]
with threshold $\Delta_0 = 0.10$, under either 0--1 loss or squared loss
on $\Delta^{(v)}$.  To assess backbone structure, we use the SBM to compute
the expected degree of each household and, for each posterior draw,
measure the fraction of total expected degree carried by the top
$K=10$ households.  A village is classified as having a
\emph{concentrated} backbone if this fraction exceeds $50\%$ and
\emph{diffuse} otherwise.

\paragraph{Robustness analysis and results.}
For each village, we compute the baseline posterior $\Pi^{(v)}_0$ under ER
and under the three--block SBM, form a tempered BIC--based model
posterior over $\{\text{ER},\text{SBM}\}$, and obtain the Bayes action
and posterior risk $\rho^{(v)}_0$ for three decision problems:
(i) model choice (ER vs SBM),
(ii) strong vs weak assortativity, and
(iii) concentrated vs diffuse backbone.
We then construct KL--balls $\mathcal{U}_C(\Pi^{(v)}_0)$ and, for a grid
$C \in [10^{-4},10^{-1}]$, evaluate the least--favorable tilted posterior
$\tilde\Pi^{(v)}_C$ and the corresponding robust risk
$\rho^{(v)}_{\text{rob}}(C)$.

The tempered model posterior overwhelmingly prefers the SBM to ER in every
village. The posterior mass on ER is numerically indistinguishable from
zero, and the Watson--Holmes sensitivity curve for a representative
village remains well below the unit--slope reference line
(Fig.~\ref{fig:karnataka-er-sbm}c), with no model switch on the grid of
radii considered.  At the same time the SBM posteriors assign essentially
no mass to ``strong assortativity'' ($\Delta^{(v)} > \Delta_0$) or to a
concentrated backbone: the Bayes decisions are ``weak/none'' and
``diffuse'' in all villages, and the corresponding posterior error
probabilities are effectively zero.  Consequently, the Watson--Holmes
robust risks do not induce any decision change for any village on
$C \in [10^{-4},10^{-1}]$, so the implied switching radii
$C^{(v)}_\star$ all exceed $0.1$ (and are formally infinite under the
absolute--continuity restriction).

In summary, the Karnataka village networks exhibit pronounced small--world
structure but only very mild block assortativity and no evidence of a
highly concentrated backbone.  These qualitative conclusions are
remarkably stable under local KL perturbations of the working models,
providing a contrast with the other experiments where the same
Watson--Holmes analysis reveals near--critical sensitivity.

\begin{table}[H]
  \centering
  \caption{Summary of Karnataka village networks.
  Values are median [interquartile range] across $n_v = 75$ villages.}
  \label{tab:karnataka-summary}
  \begin{tabular}{lcccc}
    \toprule
    & $C$ & $L$ & $S$ & $\lambda_1$ \\
    \midrule
    Value &
    $0.375\,[0.343, 0.429]$ &
    $4.10\,[3.91, 4.36]$ &
    $30.3\,[25.5, 37.2]$ &
    $15.9\,[14.4, 18.9]$ \\
    \bottomrule
  \end{tabular}
\end{table}

\begin{figure}[H]
  \centering
  \includegraphics[width=0.48\textwidth]{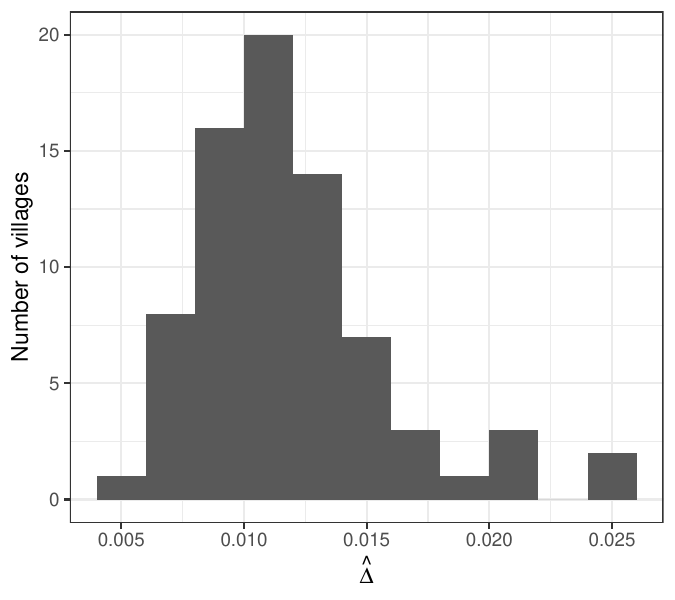}%
  \hfill
  \includegraphics[width=0.48\textwidth]{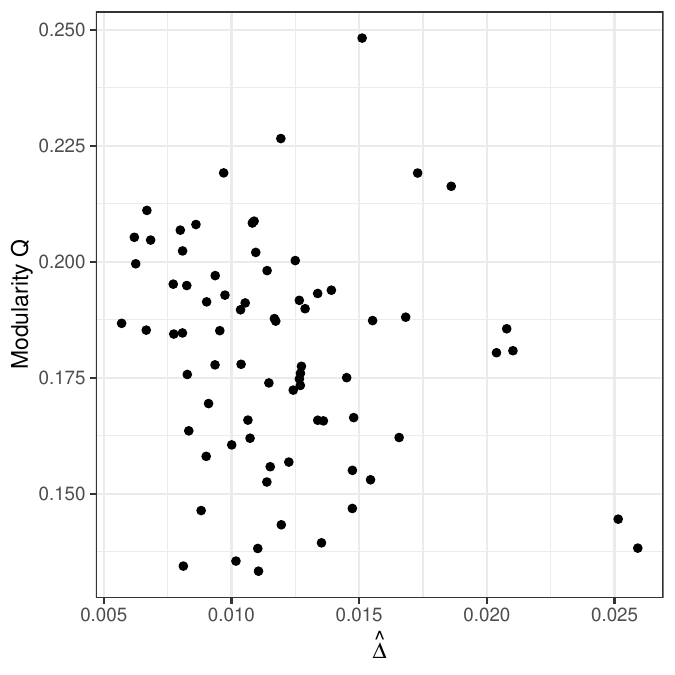}\\[0.5em]
  \includegraphics[width=0.48\textwidth]{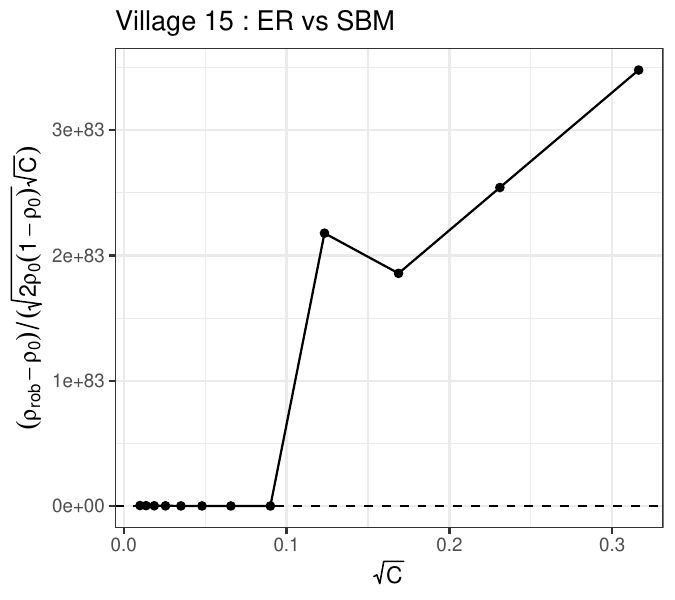}
  \caption{Karnataka village networks, ER vs three--block SBM.
  (a) Distribution of the SBM assortativity contrast $\hat\Delta^{(v)}$
  across villages.
  (b) $\hat\Delta^{(v)}$ versus modularity $Q^{(v)}$ for the inferred
  partition.
  (c) Watson--Holmes normalized sensitivity curve for a representative
  village, comparing ER and SBM; the curve remains well below the
  unit--slope reference line, with no model switch on the grid
  $C \in [10^{-4},10^{-1}]$.}
  \label{fig:karnataka-er-sbm}
\end{figure}

\begin{figure}[H]
  \centering
  \includegraphics[width=0.48\textwidth]{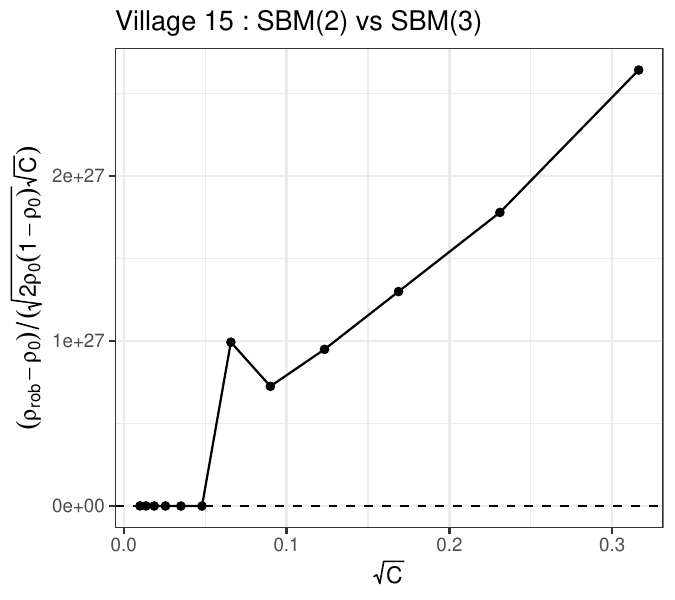}%
  \hfill
  \includegraphics[width=0.48\textwidth]{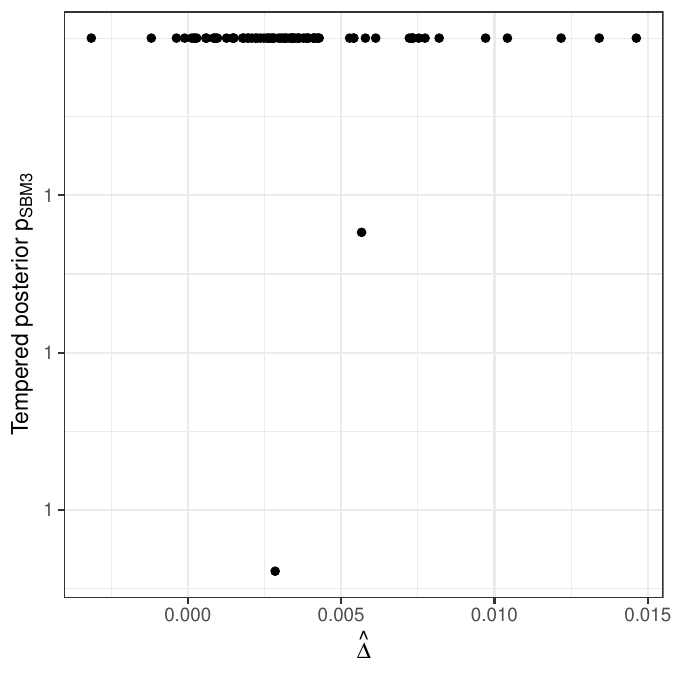}
  \caption{Spectral SBM robustness check.  Comparison of $K=2$ and
  $K=3$ spectral SBMs across villages.  The three--block model is
  consistently preferred by tempered BIC, and Watson--Holmes sensitivity
  again shows no decision switches on the grid $C \in [10^{-4},10^{-1}]$.}
  \label{fig:karnataka-sbm23}
\end{figure}

\paragraph{Assortativity by observed covariates.}
We finally replace the degree-based blocks by covariate-defined blocks,
constructing separate SBMs for gender, caste and religion.  For each
village $v$ and each attribute we form blocks from the dominant value of
that attribute at the household level and fit an ER--versus--SBM model
comparison as in the baseline analysis.  The resulting assortativity
contrasts are small and slightly negative: the median posterior
point estimates across villages are
$\hat\Delta_{\text{gender}}   = -0.0054\,[-0.0068,-0.0043]$,
$\hat\Delta_{\text{caste}}    = -0.0059\,[-0.0073,-0.0044]$ and
$\hat\Delta_{\text{religion}} = -0.0054\,[-0.0067,-0.0042]$,
indicating mild disassortativity rather than strong within-group
clustering by these covariates.

Despite this, tempered BIC decisively favors the covariate-based SBMs
over the homogeneous ER model in all villages (median tempered model
probability $\Pr(\mathrm{SBM}\mid\text{data}) \approx 1$), so the
conclusion that some structured deviation from ER is needed remains
robust even when assortativity by specific observed attributes is weak.

\begin{figure}[H]
  \centering
  \includegraphics[width=0.48\textwidth]{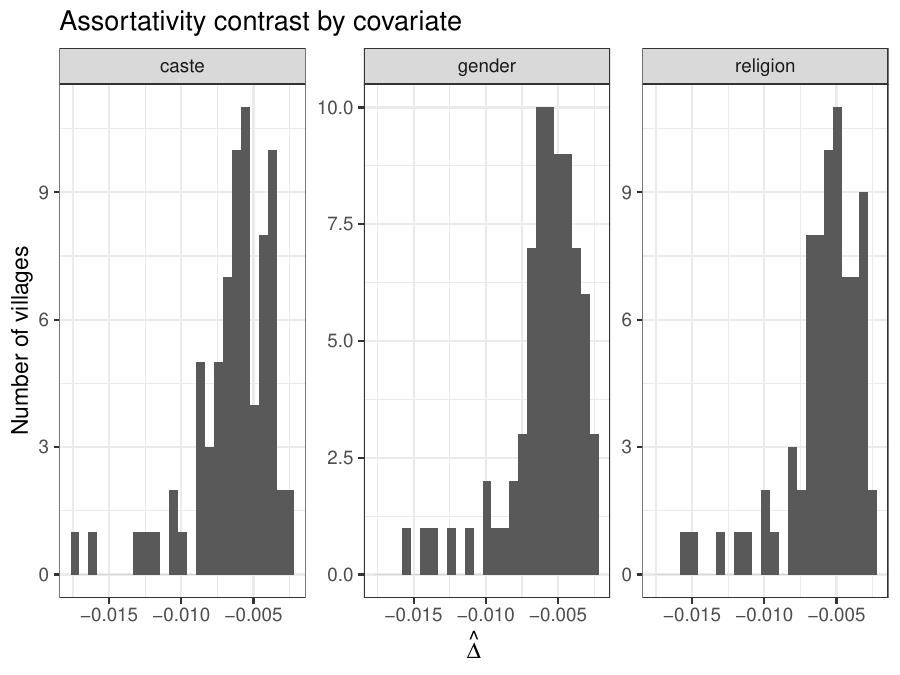}%
  \hfill
  \includegraphics[width=0.48\textwidth]{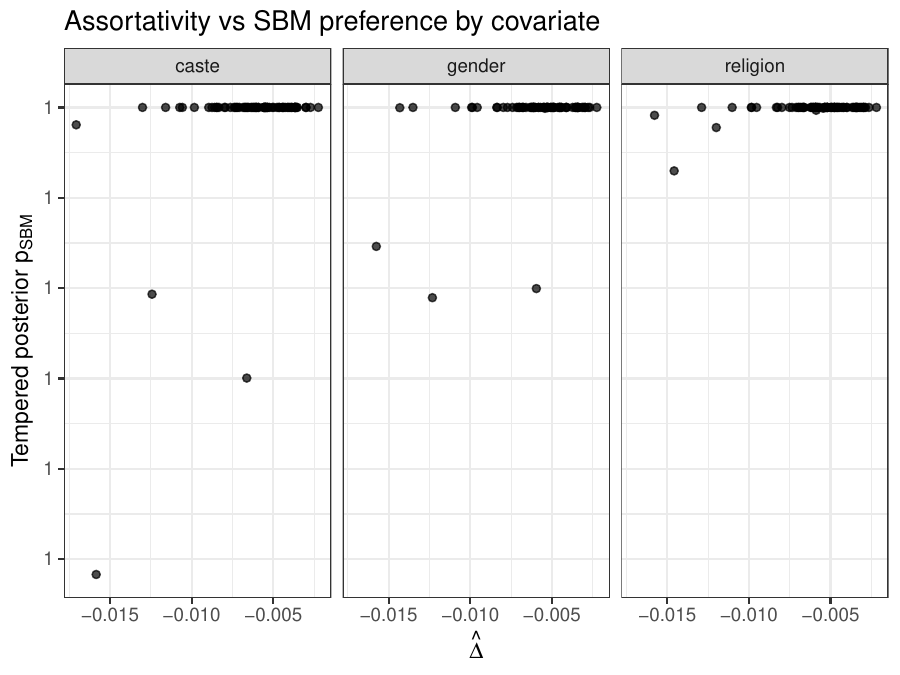}
  \caption{Karnataka village networks, covariate-based SBMs.
  Left: distribution of the assortativity contrast $\hat\Delta^{(v)}$
  for SBMs based on gender, caste and religion.  Right:
  $\hat\Delta^{(v)}$ versus tempered posterior model probability
  $p_{\mathrm{SBM}}$, showing that the SBM is strongly preferred over ER
  even when the estimated contrast is close to zero or slightly negative.}
  \label{fig:karnataka-covariate-sbm}
\end{figure}

\paragraph{Latent--space robustness: SBM versus RDPG.}
To check whether the block--model conclusions above are an artifact of the SBM parametrization, we also compare, for each village $v$, the three--block SBM to a $d=3$ random dot product graph (RDPG) fitted by adjacency spectral embedding, which is a much higher--dimensional latent--space model.  As in the brain experiment, we approximate the marginal likelihoods via BIC and form a tempered pseudo--posterior over $\{\text{SBM}, \text{RDPG}\}$ with temperature $\tau = 0.25$.  Across all $n_v = 75$ villages, the tempered posterior mass on the latent--space model is essentially zero: the median tempered probability of the RDPG is $\tilde{p}_{\mathrm{RDPG}} = 0$ with interquartile range $[0,0]$, and the baseline Bayes action always selects the SBM.  The decision--theoretic robustness switching radii for this SBM--versus--RDPG decision are all at the lower bound implied by our robustness floor ($C^\star \approx 2.8$), well beyond the radii considered elsewhere in this section, so that even very large KL perturbations would be required to make the RDPG competitive.  Figure~\ref{fig:karnataka-sbm-rdpg} shows that the small--world index $S$ carries no discernible association with $\tilde{p}_{\mathrm{RDPG}}$, and that the preferred model is the SBM for every village across the $(C,L)$ small--world regime.  In this high--signal setting, the main modeling question is therefore whether to move away from homogeneity (ER) at all; once block structure is allowed, further latent--space flexibility has negligible impact on the robust conclusions.

\begin{figure}[H]
  \centering
  \includegraphics[width=0.48\textwidth]{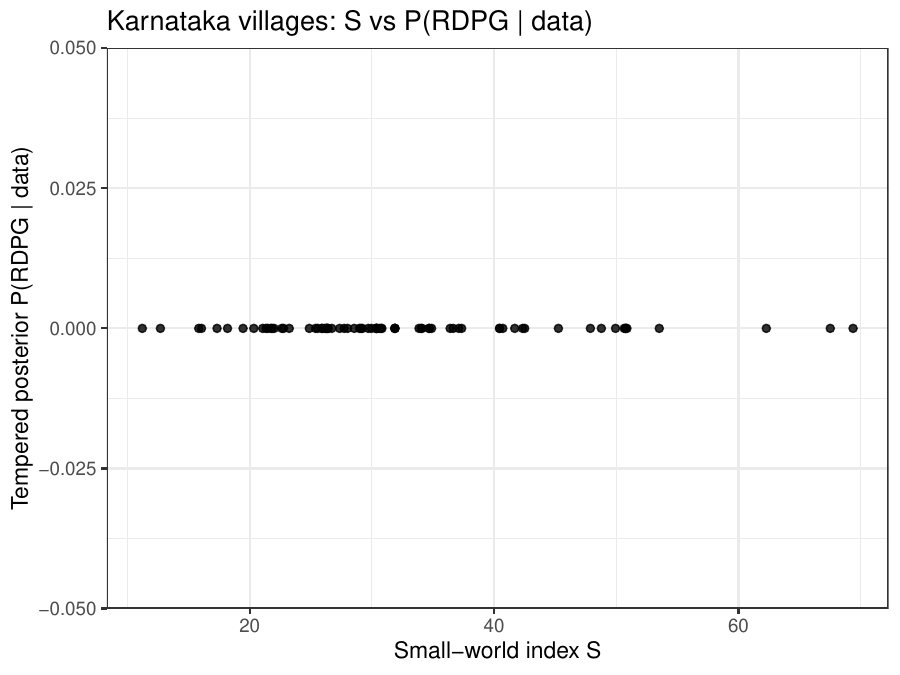}%
  \hfill
  \includegraphics[width=0.48\textwidth]{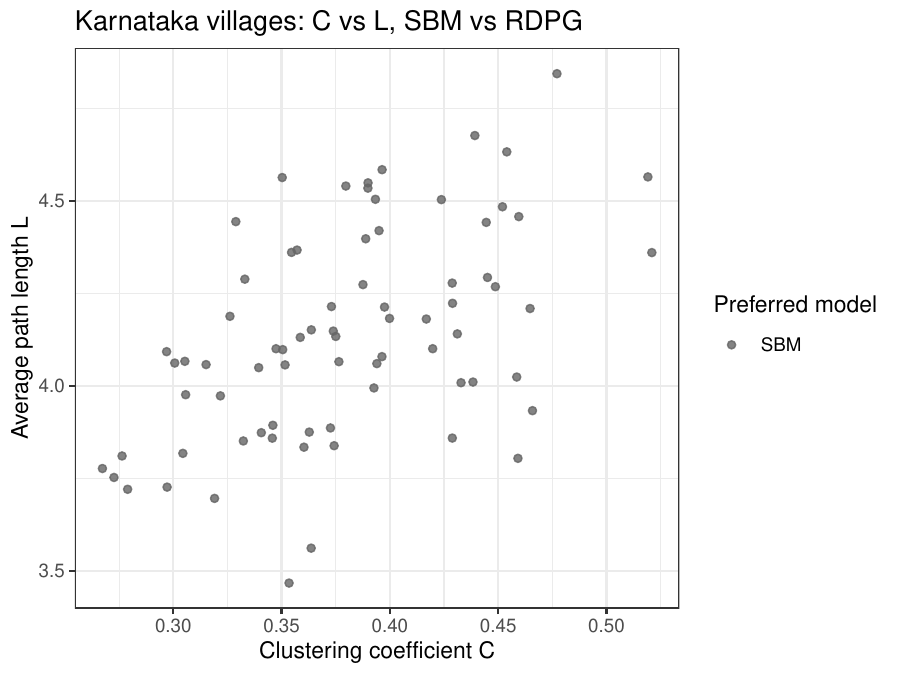}
  \caption{Karnataka village networks, SBM versus RDPG.  
  \emph{Left:} small--world index $S$ versus tempered posterior probability 
  $\tilde{p}_{\mathrm{RDPG}} = \Pr(\mathrm{RDPG}\mid\text{data})$; all villages place essentially zero mass on the latent--space model.  
  \emph{Right:} global clustering $C$ versus average path length $L$, with points colored by the preferred model; the three--block SBM is selected in every village.}
  \label{fig:karnataka-sbm-rdpg}
\end{figure}

\section{Discussion}
\label{sec:discussion}

We have developed a decision–theoretic framework for assessing how sensitive Bayesian network analyses are to local misspecification of the working model. Starting from the Watson–Holmes notion of robustness, in which actions minimize worst–case posterior expected loss over a small Kullback–Leibler neighborhood of a reference posterior, we specialized this perspective to exchangeable network models and to decisions driven by network functionals. By combining graphon limits with classical percolation and random graph asymptotics, this yields both conceptual insight and concrete information–theoretic limits for robust network inference.\\

At the level of network functionals, we showed that when decisions depend on quantities such as susceptibility in configuration models, percolation–based robustness indices, SIS noise indices on graphons, or spectral gaps, decision–theoretic robustification admits sharp small–radius expansions of the robust posterior risk. Under squared loss, the leading inflation term is controlled by the posterior variance of the loss and grows proportionally to the square root of the divergence radius. Near fragmentation or epidemic thresholds, where robustness indices themselves diverge roughly like the inverse distance to criticality, these expansions reveal a universal critical behavior: the decision–level uncertainty inflates at a rate corresponding to the inverse fourth power of the distance to criticality. Thus, as the network approaches a phase transition, decisions based on robustness functionals become even more unstable than the functionals alone, which can be quantified.\\

On the information–theoretic side (Section~4), we analyzed decision–theoretic robust model selection between sparse Erd\H{o}s–Rényi graphs and two–block stochastic block models, both in labelled form and via sparse graphons. We derived explicit per–vertex information and robustness noise indices, $I(\lambda)$ and $J(\lambda)$, that govern the exponential decay of Bayes factor errors under local perturbations. Embedding this two–point experiment into broad nonparametric classes of sparse graphs—including configuration models and sparse graphon classes—we established matching decision–theoretic minimax lower bounds. No Bayesian or frequentist procedure can uniformly improve upon the robust error exponent $J(\lambda)$ once robustness to local KL perturbations is required. An analogous minimax phenomenon for near–critical percolation functionals in configuration models shows that the critical exponent identified by our theory is intrinsic to the problem rather than an artefact of a particular parametric specification.\\

Moreover, we showed that decision–theoretic robustness can be implemented efficiently on top of existing network inference pipelines. For KL balls, the least–favorable posterior is obtained by entropically tilting posterior or variational samples, reducing robustification to a one–dimensional convex optimization problem. For more general $\phi$–divergence balls, we proposed a mirror–descent adversary coupled with constrained Hamiltonian Monte Carlo to explore the tilted posterior. These procedures produce robustified versions of standard posterior summaries and model comparison criteria for SBMs, graphons, configuration models and latent position models at modest additional computational cost.\\

Our empirical studies on functional brain connectivity and Karnataka village social networks illustrate how decision–theoretic robustness informs substantive conclusions. In the brain network, Bayes factor comparisons between community and latent–space representations were found to be highly sensitive to local perturbations in regions where epidemic–like thresholds are weakly identified, whereas certain spectral summaries remained relatively stable. In the Karnataka villages, decision–theoretic sensitivity analysis highlighted villages and intervention strategies whose apparent superiority under a single working model is fragile to local misspecification, suggesting caution in interpreting seemingly decisive rankings.\\

Several limitations and extensions remain. First, our robustness guarantees are local, protecting against small perturbations measured by KL or more general $\phi$–divergences, but not against gross misspecification or adversarial rewiring of the network. Second, the nonparametric minimax results focus on comparatively simple sparse models (Erd\H{o}s–Rényi, SBMs, configuration models and graphons); extending similar analyses to richer latent space models, temporal or multiplex networks, and models with additional nodal attributes is an open challenge. Third, our computations rely on approximate posteriors (MCMC, variational, or spectral pseudo–posteriors) and a systematic study of how approximation error interacts with decision–theoretic robustification would be valuable.\\

Despite these caveats, our results suggest that decision–theoretic robustness provides a useful organizing principle for network analysis. Decision–theoretic robustness offers a mathematically tractable way to quantify the stability of Bayesian decisions that link naturally to graphon limits and random graph asymptotics, and yield interpretable noise indices and critical exponents with clear minimax meaning. This work is a step toward a broader theory of decision–theoretic robustness in network models. Promising directions include developing robust procedures for dynamic and temporal networks, incorporating additional sources of uncertainty, such as missing edges or node attributes, and designing diagnostics and visualizations that make decision–theoretic sensitivity analysis routine in applied network studies.

\bibliographystyle{plainnat}
\bibliography{mybib}


\appendix

\section*{Computational details for MCMC and mirror descent}
\label{sec:si-computation}

In this section, we compile the algorithmic recipes used in Section~\ref{sec:computation}.
Algorithm~\ref{alg:si-tilting} describes the KL--ball entropic tilting
procedure applied to posterior or pseudo--posterior draws (typically obtained
by MCMC or variational inference), and Algorithm~\ref{alg:mirror-adversary}
gives the discrete mirror--descent adversary for general $\phi$--divergence
balls.

\begin{algorithm}[H]
\caption{Robust MCMC via entropic tilting over a KL ball}
\label{alg:si-tilting}
\begin{algorithmic}[1]
\Require Baseline posterior or pseudo--posterior draws
         $\{(\theta^s,w_s)\}_{s=1}^S$ (usually from MCMC or VI),
         an action $a$, loss values $L_s = L(a,\theta^s)$,
         KL radius $C>0$, and a numerical tolerance $\varepsilon>0$.
\Ensure Tilted weights $\{q_s^\star\}$ and robust posterior risk estimate
        $\sum_s q_s^\star L_s$.

\State Define the dual objective
  \[
    \psi(\lambda)
    := \frac{C + \log\Bigl(\sum_{s=1}^S w_s
          \exp\{\lambda L_s\}\Bigr)}{\lambda},
    \qquad \lambda>0.
  \]
\State Initialize a search interval $[\lambda_{\min},\lambda_{\max}]$,
       for example $\lambda_{\min}=10^{-4}$, $\lambda_{\max}=10^{4}$.
\While{$\lambda_{\max}-\lambda_{\min} > \varepsilon$}
  \State Set $\lambda \gets (\lambda_{\min}+\lambda_{\max})/2$.
  \State Compute
    \[
      Z(\lambda) := \sum_{s=1}^S w_s e^{\lambda L_s},
      \qquad
      q_s(\lambda) := \frac{w_s e^{\lambda L_s}}{Z(\lambda)}.
    \]
  \State Evaluate the KL constraint
    \[
      K(\lambda) := \sum_{s=1}^S q_s(\lambda)
        \log\frac{q_s(\lambda)}{w_s}.
    \]
  \If{$K(\lambda) > C$}
    \State Set $\lambda_{\max} \gets \lambda$.
  \Else
    \State Set $\lambda_{\min} \gets \lambda$.
  \EndIf
\EndWhile
\State Set $\lambda^\star \gets \lambda_{\min}$ and
       $q_s^\star \gets q_s(\lambda^\star)$ for all $s$.
\State \textbf{return} $\{q_s^\star\}$ and
       $\sum_{s=1}^S q_s^\star L_s$ as the robust posterior risk.
\end{algorithmic}
\end{algorithm}

\begin{algorithm}[H]
\caption{Mirror--descent adversary over $\phi$--divergence balls}
\label{alg:mirror-adversary}
\begin{algorithmic}[1]
\Require Baseline weights $\bm w=(w_1,\dots,w_S)$,
         losses $L_s=L(a,\theta^s)$, radius $C>0$,
         step size $\eta>0$, number of iterations $T$.
\Ensure Approximate adversarial weights $\bm q^{(T)}$ and robust risk
        estimate $\sum_s q_s^{(T)} L_s$.

\State Initialize $u_s^{(0)} \gets 0$ and $q_s^{(0)} \gets w_s$ for all $s$.
\For{$t=0,\dots,T-1$}
  \State Form current adversarial weights
    \[
      q_s^{(t)} \propto w_s \exp\{u_s^{(t)}\},
      \qquad s=1,\dots,S,
    \]
    and renormalize so that $\sum_s q_s^{(t)}=1$.
  \State Compute the current robust risk
    \[
      \bar L^{(t)} := \sum_{r=1}^S q_r^{(t)} L_r.
    \]
  \State Set gradient
    \[
      g_s^{(t)} := L_s - \bar L^{(t)},
      \qquad s=1,\dots,S.
    \]
  \State Take a mirror step in log--tilt space:
    \[
      \tilde u_s^{(t+1)} \gets u_s^{(t)} + \eta\,g_s^{(t)}.
    \]
  \State Compute the provisional weights
    \[
      \tilde q_s^{(t+1)} \propto w_s \exp\{\tilde u_s^{(t+1)}\},
      \qquad s=1,\dots,S,
    \]
    and renormalize.
  \State Project back onto the $\phi$--ball:
    \[
q^{(t+1)} \leftarrow \arg\min_{q}\Big\{ \mathrm{KL}\!\big(q\|\tilde q^{(t+1)}\big):
\sum_s q_s=1,\ q_s\ge 0,\ D_\phi(q\|w)\le C\Big\}.
\]
    using $\tilde{\bm q}^{(t+1)}$ as a warm start.
    (For KL, this projection has a closed form; for general $\phi$
     it is a small convex program.)
  \State Update $u_s^{(t+1)} \gets
    \log(q_s^{(t+1)}/w_s)$ for all $s$.
\EndFor
\State \textbf{return} $\bm q^{(T)}$ and $\sum_s q_s^{(T)} L_s$.
\end{algorithmic}
\end{algorithm}

\section*{Supplementary proofs}

\newtheorem{suppprop}{Proposition}
\renewcommand{\thesuppprop}{S.\arabic{suppprop}}

\newtheorem{supptheorem}{Theorem}
\renewcommand{\thesupptheorem}{S.\arabic{supptheorem}}

\newtheorem{suppassumption}{Assumption}
\renewcommand{\thesuppassumption}{S.\arabic{suppassumption}}

\begin{suppprop}[Expected KL divergence for Dirichlet perturbations of SBMs]
\label{prop:dirichlet-KL-SBM-SI}
Let $W^\star$ be a step--function graphon corresponding to a $K$--block
stochastic block model (SBM) with block edge probabilities
$P^\star = (p^\star_{ab})_{1\le a,b\le K} \in (0,1)^{K\times K}$.
Write $\bm p^\star = (p^\star_1,\dots,p^\star_{K^2})$ for the
vectorization of $P^\star$, normalized so that
$p^\star_i\in(0,1)$ and $\sum_{i=1}^{K^2} p^\star_i = 1$.\\

Consider a ``generalized Bayesian bootstrap'' perturbation of $P^\star$:
conditionally on $\bm p^\star$ draw random cell weights
\[
  \bm W = (W_1,\dots,W_{K^2})
  \sim \mathrm{Dirichlet}\!\bigl(\alpha_n \bm p^\star\bigr),
\]
and define the Kullback--Leibler divergence
\[
  \mathrm{KL}(\bm W\Vert \bm p^\star)
  := \sum_{i=1}^{K^2} W_i \log\frac{W_i}{p^\star_i}.
\]
Then
\begin{equation}
  \label{eq:EKL-Dirichlet-SBM-SI}
  \mathbb{E}\bigl[\mathrm{KL}(\bm W\Vert \bm p^\star)\bigr]
  \;=\;
  \sum_{i=1}^{K^2}
    p^\star_i\Bigl\{
      \psi_0(\alpha_n p^\star_i + 1)
      - \psi_0(\alpha_n + 1)
      - \log p^\star_i
    \Bigr\},
\end{equation}
where $\psi_0$ is the digamma function.\\

Moreover, as $\alpha_n\to\infty$,
\begin{equation}
  \label{eq:EKL-Dirichlet-SBM-asymptotic-SI}
  \mathbb{E}\bigl[\mathrm{KL}(\bm W\Vert \bm p^\star)\bigr]
  \;=\;
  \frac{K^2 - 1}{2\alpha_n}
  \;+\;
  \mathcal{O}\!\left(\frac{1}{\alpha_n^2}\right),
\end{equation}
independently of the particular baseline SBM $P^\star$.
\end{suppprop}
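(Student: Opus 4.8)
The plan is to reduce the exact identity \eqref{eq:EKL-Dirichlet-SBM-SI} to two elementary Dirichlet moment facts, and then obtain the asymptotics \eqref{eq:EKL-Dirichlet-SBM-asymptotic-SI} from the standard digamma expansion. Throughout write $\bm\alpha=\alpha_n\bm p^\star$ and $\alpha_0:=\sum_i\alpha_i=\alpha_n$, the last equality because $\sum_i p^\star_i=1$. Since the simplex has only $K^2$ coordinates and $w\mapsto w\log w$ is bounded on $[0,1]$, I would expand $\mathrm{KL}(\bm W\Vert\bm p^\star)=\sum_i W_i\log W_i-\sum_i W_i\log p^\star_i$ and take expectations termwise. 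The second sum gives $\sum_i p^\star_i\log p^\star_i$ at once from $\mathbb{E}[W_i]=\alpha_i/\alpha_0=p^\star_i$, so the whole statement hinges on $\mathbb{E}[W_i\log W_i]$.

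For $\mathbb{E}[W_i\log W_i]$ I would start from the exact fractional moment $\mathbb{E}[W_i^s]=\Gamma(\alpha_i+s)\Gamma(\alpha_0)/\{\Gamma(\alpha_i)\Gamma(\alpha_0+s)\}$, valid for $s>-\alpha_i$, which is immediate from the Dirichlet (multivariate Beta) normalizing constant. Differentiating in $s$ under the integral (routine, since $W_i\in(0,1)$ makes $|W_i^s\log W_i|$ dominated for $s$ near $1$) and using $(\log\Gamma)'=\psi_0$ gives $\frac{d}{ds}\mathbb{E}[W_i^s]=\mathbb{E}[W_i^s]\{\psi_0(\alpha_i+s)-\psi_0(\alpha_0+s)\}$; evaluating at $s=1$ yields the known identity $\mathbb{E}[W_i\log W_i]=(\alpha_i/\alpha_0)\{\psi_0(\alpha_i+1)-\psi_0(\alpha_0+1)\}$. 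Substituting $\alpha_i=\alpha_n p^\star_i$, $\alpha_0=\alpha_n$ gives $\mathbb{E}[W_i\log W_i]=p^\star_i\{\psi_0(\alpha_n p^\star_i+1)-\psi_0(\alpha_n+1)\}$, and subtracting $\sum_i p^\star_i\log p^\star_i$ produces exactly \eqref{eq:EKL-Dirichlet-SBM-SI}.

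For the large-$\alpha_n$ expansion I would insert $\psi_0(x+1)=\log x+\tfrac{1}{2x}-\tfrac{1}{12x^2}+\mathcal{O}(x^{-4})$ into each brace of \eqref{eq:EKL-Dirichlet-SBM-SI}. The essential point is that the logarithmic parts cancel identically, $\log(\alpha_n p^\star_i)-\log\alpha_n-\log p^\star_i=0$, leaving $\psi_0(\alpha_n p^\star_i+1)-\psi_0(\alpha_n+1)-\log p^\star_i=\tfrac{1}{2\alpha_n}\bigl(\tfrac{1}{p^\star_i}-1\bigr)+\mathcal{O}(\alpha_n^{-2})$. Multiplying by $p^\star_i$ and summing over the $K^2$ cells gives $\tfrac{1}{2\alpha_n}\sum_i(1-p^\star_i)+\mathcal{O}(\alpha_n^{-2})=\tfrac{K^2-1}{2\alpha_n}+\mathcal{O}(\alpha_n^{-2})$, using $\sum_i 1=K^2$ and $\sum_i p^\star_i=1$.

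The computation is essentially routine once the moment identity is recalled; the one point needing care is the remainder. The $\mathcal{O}(\alpha_n^{-2})$ correction carries a $P^\star$-dependent constant coming from the $\tfrac{1}{12x^2}$ term of the digamma expansion, of order $\alpha_n^{-2}\sum_i(1/p^\star_i)$, so the phrase ``independently of $P^\star$'' should be understood to refer to the leading coefficient $\tfrac{K^2-1}{2\alpha_n}$ only; that remainder constant is finite precisely because the hypothesis $p^\star_{ab}\in(0,1)$ keeps $\min_i p^\star_i$ bounded away from $0$. I would state the expansion at the stated order and flag this dependence explicitly.
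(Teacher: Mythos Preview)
Your proof is correct and follows essentially the same route as the paper: decompose the KL, use $\mathbb{E}[W_i]=p^\star_i$ for the linear term, compute $\mathbb{E}[W_i\log W_i]$ by differentiating the fractional moment (the paper phrases this via the Beta marginal, you via the Dirichlet directly---same identity), and then apply the digamma expansion with the $\log$ cancellation. Your added remark that the $\mathcal{O}(\alpha_n^{-2})$ constant depends on $P^\star$ through $\sum_i 1/p^\star_i$ is a fair clarification the paper does not make.
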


\begin{proof}
Write $d:=K^2$ for brevity.  Let $\bm W\sim\Dir(\alpha_n\bm p^\star)$.
Then $W_i>0$ almost surely, $\sum_i W_i=1$, and the density of $\bm W$
with respect to Lebesgue measure on the simplex is
\[
  f(\bm w)
  \propto \prod_{i=1}^d w_i^{\alpha_n p^\star_i-1},
  \qquad \bm w\in\Delta^{d-1}.
\]
The KL divergence decomposes as
\[
  \mathrm{KL}(\bm W\Vert\bm p^\star)
  = \sum_{i=1}^d W_i\log W_i - \sum_{i=1}^d W_i \log p^\star_i.
\]
Taking expectations and using $\mathbb{E}[W_i]=p^\star_i$ gives
\begin{equation}
  \label{eq:EKL-decomp}
  \mathbb{E}\bigl[\mathrm{KL}(\bm W\Vert\bm p^\star)\bigr]
  = \sum_{i=1}^d \mathbb{E}[W_i\log W_i]
    - \sum_{i=1}^d p^\star_i \log p^\star_i.
\end{equation}

We now compute $\mathbb{E}[W_i\log W_i]$.  The marginal distribution of
$W_i$ is $\mathrm{Beta}(a_i,b_i)$ with
$a_i=\alpha_n p^\star_i$ and $b_i=\alpha_n(1-p^\star_i)$.  Let
$X\sim\mathrm{Beta}(a,b)$ with density proportional to
$x^{a-1}(1-x)^{b-1}$ on $(0,1)$.  For $t>-a$ we have
\[
  \mathbb{E}[X^t]
  = \frac{B(a+t,b)}{B(a,b)}
  = \frac{\Gamma(a+t)\Gamma(a+b)}{\Gamma(a+b+t)\Gamma(a)}.
\]
Differentiating with respect to $t$,
\[
  \frac{{\rm d}}{{\rm d}t}\mathbb{E}[X^t]
  = \mathbb{E}[X^t\log X]
  = \bigl\{\psi_0(a+t) - \psi_0(a+b+t)\bigr\}
    \,\mathbb{E}[X^t],
\]
where $\psi_0$ is the digamma function.  Evaluating at $t=1$ and using
$\mathbb{E}[X]=a/(a+b)$ yields
\[
  \mathbb{E}[X\log X]
  = \frac{a}{a+b}\,\bigl\{\psi_0(a+1) - \psi_0(a+b+1)\bigr\}.
\]

Applying this to $W_i\sim\mathrm{Beta}(a_i,b_i)$ with
$a_i=\alpha_n p^\star_i$ and $b_i=\alpha_n(1-p^\star_i)$ gives
\[
  \mathbb{E}[W_i\log W_i]
  = p^\star_i\bigl\{\psi_0(\alpha_n p^\star_i+1)
                    - \psi_0(\alpha_n+1)\bigr\}.
\]
Substituting into \eqref{eq:EKL-decomp} we obtain
\[
  \mathbb{E}\bigl[\mathrm{KL}(\bm W\Vert\bm p^\star)\bigr]
  = \sum_{i=1}^d
      p^\star_i\bigl\{\psi_0(\alpha_n p^\star_i+1)
                      - \psi_0(\alpha_n+1)\bigr\}
    - \sum_{i=1}^d p^\star_i\log p^\star_i,
\]
which is exactly \eqref{eq:EKL-Dirichlet-SBM-SI}.\\

For the asymptotics, recall the expansion
\[
  \psi_0(x+1) = \log x + \frac{1}{2x} + \mathcal{O}\!\left(\frac{1}{x^2}\right)
  \qquad\text{as }x\to\infty.
\]
Thus, as $\alpha_n\to\infty$,
\[
  \psi_0(\alpha_n p^\star_i+1)
  = \log(\alpha_n p^\star_i)
    + \frac{1}{2\alpha_n p^\star_i}
    + \mathcal{O}\!\left(\frac{1}{\alpha_n^2}\right),
\]
and
\[
  \psi_0(\alpha_n+1)
  = \log \alpha_n
    + \frac{1}{2\alpha_n}
    + \mathcal{O}\!\left(\frac{1}{\alpha_n^2}\right).
\]
Therefore
\[
  \psi_0(\alpha_n p^\star_i+1)
  - \psi_0(\alpha_n+1)
  - \log p^\star_i
  = \frac{1}{2\alpha_n}\Bigl(\frac{1}{p^\star_i}-1\Bigr)
    + \mathcal{O}\!\left(\frac{1}{\alpha_n^2}\right),
\]
and multiplying by $p^\star_i$ yields
\[
  p^\star_i\Bigl\{
    \psi_0(\alpha_n p^\star_i+1)
    - \psi_0(\alpha_n+1)
    - \log p^\star_i
  \Bigr\}
  = \frac{1}{2\alpha_n}\bigl(1-p^\star_i\bigr)
    + \mathcal{O}\!\left(\frac{1}{\alpha_n^2}\right).
\]
Summing over $i$ and using $\sum_i p^\star_i=1$ we obtain
\[
  \mathbb{E}\bigl[\mathrm{KL}(\bm W\Vert\bm p^\star)\bigr]
  = \frac{1}{2\alpha_n}\sum_{i=1}^d (1-p^\star_i)
    + \mathcal{O}\!\left(\frac{1}{\alpha_n^2}\right)
  = \frac{d-1}{2\alpha_n}
    + \mathcal{O}\!\left(\frac{1}{\alpha_n^2}\right),
\]
which is \eqref{eq:EKL-Dirichlet-SBM-asymptotic-SI} with $d=K^2$.
\end{proof}

\begin{supptheorem}[Reachability of exchangeable network models in a KL ball]
\label{thm:reachability-KL-ball-SI}
Fix $n$ and a working step--graphon $W^\star$ with $K$ blocks, and let
$G^\star = G(W^\star)$ be the associated graph law on $n$--vertex graphs.
For $C>0$, let $\Gamma_C(G^\star)$ be the KL--ball of radius $C$ around
$G^\star$ as in~\eqref{eq:object-KL-ball}.  Assume that all block
probabilities of $W^\star$ lie in $(\varepsilon,1-\varepsilon)$
for some $\varepsilon>0$, and that $C$ is small enough so that every
$K$--block step graphon $W$ with $G(W)\in\Gamma_C(G^\star)$ also has block
probabilities in $(\varepsilon/2,1-\varepsilon/2)$.

Consider the following Markov chain on the parameter space of $K$--block
SBMs whose laws lie in $\Gamma_C(G^\star)$:

\begin{enumerate}
  \item \emph{Perturbing move.}  Starting from a current block matrix
        $P=(p_{ab})$, draw independent
        $\widetilde Y_{ab}\sim\Gamma(\alpha_{ab},1)$ with $\alpha_{ab}>0$,
        set
        \[
          \widetilde p_{ab}
          = \frac{\widetilde Y_{ab}}{\sum_{c,d}\widetilde Y_{cd}},
        \]
        and form the proposal $\widetilde P=(\widetilde p_{ab})$.  If the
        corresponding step graphon $W_{\widetilde P}$ satisfies
        $G(W_{\widetilde P})\in\Gamma_C(G^\star)$,
        accept the move and set $P'\gets\widetilde P$; otherwise reject and
        set $P'\gets P$.
  \item \emph{Rescaling move.}  (Optional.)  With some probability (with
        $1/2$)m multiply a randomly chosen subset of entries of $P$ by a
        random factor in $(1-\rho,1+\rho)$, renormalize, if desired, and
        accept only if the resulting model lies in $\Gamma_C(G^\star)$;
        otherwise stay at $P$.
\end{enumerate}

Then the chain is $\psi$--irreducible and aperiodic on the interior of
$\Gamma_C(G^\star)$.  In particular, for any two $K$--block SBMs
$P,P^\dagger$ in the interior of $\Gamma_C(G^\star)$ and any
neighborhood $U$ of $P^\dagger$ contained in $\Gamma_C(G^\star)$,
\[
  \mathbb{P}_P\bigl(\exists t\ge 1:\,P_t\in U\bigr) > 0.
\]

The same conclusion holds, up to an arbitrarily small approximation error,
for random dot product graphs whose graphon can be approximated in cut norm
by $K$--block step graphons, by applying the moves above to the step--graphon
approximation.
\end{supptheorem}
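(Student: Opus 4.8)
The plan is to recast the described dynamics as a time–homogeneous Markov chain on the state space
\[
  \mathcal{S}_C := \bigl\{ P \in \Delta^{K^2-1} : {\rm KL}\bigl(G(W_P)\,\Vert\,G^\star\bigr) \le C \bigr\},
\]
where $W_P$ is the $K$–block step graphon with block matrix $P$, $\Delta^{K^2-1}$ is the open simplex of $K^2$ cell probabilities, and $g(P):={\rm KL}(G(W_P)\,\Vert\,G^\star)$. The first step is to record the relevant regularity of $g$: for fixed labels the adjacency–matrix KL between two $K$–block models is a positive linear combination of Bernoulli divergences ${\rm kl}(p_{ab},p^\star_{ab})$, and in the graphon–sampling scheme a mixture of such; in either case $g$ is continuous — indeed real–analytic — on the open region where all block probabilities lie in $(0,1)$, with $g(P^\star)=0$ and $g>0$ elsewhere. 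Hence $\mathcal{S}_C^\circ=\{g<C\}$ is open, nonempty and contains $P^\star$ (continuity of $g$ at $P^\star$); and, invoking the standing assumption that $C$ is small, $\mathcal{S}_C$ lies in the region with block probabilities in $(\varepsilon/2,1-\varepsilon/2)$ and is a \emph{proper} subset of the simplex. Since the perturbing move produces an absolutely continuous proposal, a chain started in $\mathcal{S}_C^\circ$ stays in $\mathcal{S}_C^\circ$ almost surely, so the phrase ``on the interior'' is consistent.

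First I would prove $\psi$–irreducibility via a one–step minorization. The structural point is that the perturbing move is an \emph{independence} proposal: $\widetilde P$ has the fixed density $f(p)\propto\prod_{a,b}p_{ab}^{\alpha_{ab}-1}$, strictly positive on the open simplex and independent of the current state, and is accepted exactly when $\widetilde P\in\mathcal{S}_C$. Therefore, for every $P$ and every measurable $A\subseteq\mathcal{S}_C$,
\[
  \mathbb{P}\bigl(P_1\in A \mid P_0=P\bigr)
  \;\ge\; \mathbb{P}(\widetilde P\in A)
  \;=\; \int_A f(p)\,\mathrm{d}p,
\]
because $\widetilde P\in A\subseteq\mathcal{S}_C$ forces acceptance (the optional rescaling branch only adds transitions and is irrelevant for a lower bound). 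Taking $\varphi:=\mathrm{Leb}|_{\mathcal{S}_C^\circ}$, any $A$ with $\varphi(A)>0$ has $\int_A f>0$ since $A$ sits in the open simplex where $f>0$; hence $\sum_{t\ge1}P^t(P,A)\ge P^1(P,A)>0$ for all $P$, i.e.\ the chain is $\varphi$–irreducible, so $\psi$–irreducible with maximal irreducibility measure equivalent to Lebesgue measure on $\mathcal{S}_C^\circ$. The displayed bound also exhibits $\mathcal{S}_C$ as a small set with (normalized) minorization measure $f|_{\mathcal{S}_C}$, so the chain is in fact uniformly ergodic.

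Aperiodicity is immediate from the same minorization: the proposal is rejected with probability $1-\int_{\mathcal{S}_C}f>0$ (as $\mathcal{S}_C$ is a proper subset of the simplex), so the minorization holds at the one–step level on a $\psi$–positive set, which makes $\mathcal{S}_C$ strongly aperiodic in the sense of Meyn–Tweedie; equivalently, $P(P,\cdot)$ carries an atom of positive mass at $P$, which is incompatible with a cyclic decomposition of period $d\ge2$. Finally, for the reachability conclusion, if $P^\dagger$ lies in the interior of $\Gamma_C(G^\star)$ and $U\subseteq\Gamma_C(G^\star)$ is any neighborhood of $P^\dagger$, then $U\cap\mathcal{S}_C^\circ$ is open and nonempty, hence of positive Lebesgue measure, so by the minorization $\mathbb{P}_P(P_1\in U)\ge\int_{U\cap\mathcal{S}_C}f>0$, and a fortiori $\mathbb{P}_P(\exists\,t\ge1:\,P_t\in U)>0$, as claimed.

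The main obstacle is not probabilistic but bookkeeping: one must carefully pin down the measurable/topological structure of $\mathcal{S}_C$ — continuity of $g$ with $\{g=0\}=\{P^\star\}$, nonemptiness of $\mathcal{S}_C^\circ$ and its containment in the open simplex where $f>0$, and measurability of the accept/reject event $\{\widetilde P\in\mathcal{S}_C\}$ — and then use ``$C$ small'' precisely twice: once to keep all block probabilities away from $\{0,1\}$, and once to keep $\mathcal{S}_C$ a proper subset of the simplex (hence a positive rejection probability). For the random dot product graph extension I would make ``arbitrarily small approximation error'' precise as: given $\delta>0$, choose $K$ so that $\|W-W_K\|_\square<\delta$ for the $K$–block step–graphon approximation $W_K$ of the RDPG graphon $W$; the chain on $K$–block SBM parameters inside $\Gamma_C(G(W_K))$ is $\psi$–irreducible and aperiodic by the argument above, and since the network functionals of interest (subgraph densities, spectral and percolation indices) are continuous in the cut norm, the set of graph laws reachable from any interior state lies within an $O(\delta)$ cut–norm, hence functional–level, neighborhood of the corresponding targets associated with $W$. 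This transfers the reachability statement to RDPGs up to the stated approximation error.
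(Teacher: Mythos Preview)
Your proposal is correct and follows essentially the same approach as the paper: both arguments exploit that the Dirichlet perturbation is an independence proposal with strictly positive density on the open simplex, so any measurable $A\subseteq\mathcal{S}_C$ of positive Lebesgue measure is hit in one step (irreducibility), and positive rejection probability yields a self-transition atom (aperiodicity). Your version is marginally sharper in noting the one-step minorization makes $\mathcal{S}_C$ a small set and the chain uniformly ergodic, and in flagging that ``$C$ small'' is used precisely to keep $\mathcal{S}_C$ a proper subset of the simplex; the paper's proof is otherwise the same argument with slightly different bookkeeping.
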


\begin{proof}
We first prove irreducibility and aperiodicity for the perturbing move
alone; adding the rescaling move can only increase the support of the chain.

\medskip\noindent
\textbf{Step 1: Parameter space and interior.}
Let $\mathcal{S}\subset(0,1)^{K^2}$ denote the open probability simplex of
block probability vectors $p=(p_1,\dots,p_{K^2})$ with $\sum_i p_i=1$.
The assumptions on $W^\star$ and $C$ imply that the set
\[
  \mathcal{S}_C
  := \bigl\{p\in\mathcal{S}:\ G(p)\in\Gamma_C(G^\star)\bigr\}
\]
is a nonempty open subset of $\mathcal{S}$: KL balls are open, and
the constraints $p_i\in(\varepsilon/2,1-\varepsilon/2)$ exclude the boundary
of the simplex.  We refer to $\mathcal{S}_C$ as the interior of the KL ball.

\medskip\noindent
\textbf{Step 2: Full support of Dirichlet perturbations.}
Given a current state $p\in\mathcal{S}_C$, the perturbing move draws
independent $\widetilde Y_{ab}\sim\Gamma(\alpha_{ab},1)$ and sets
$\widetilde p_{ab} = \widetilde Y_{ab}/\sum_{c,d}\widetilde Y_{cd}$.  The
vector $\widetilde p$ has a Dirichlet distribution with strictly positive
parameters $(\alpha_{ab})$, and hence a density with respect to Lebesgue
measure on $\mathcal{S}$ of the form
\[
  f_p(\widetilde p)
  \propto \prod_{a,b} \widetilde p_{ab}^{\alpha_{ab}-1},
  \qquad \widetilde p\in\mathcal{S}.
\]
This density is continuous and strictly positive on all of $\mathcal{S}$.
In particular, for any Borel set $B\subset\mathcal{S}$ with positive
Lebesgue measure,
\[
  \mathbb{P}_p(\widetilde p\in B) > 0.
\]

\medskip\noindent
\textbf{Step 3: Irreducibility on $\mathcal{S}_C$.}
Let $P$ and $P^\dagger$ be two step--graphons in the interior of the KL
ball, with associated block vectors $p,p^\dagger\in\mathcal{S}_C$, and let
$U$ be any open neighborhood of $p^\dagger$ contained in $\mathcal{S}_C$.
Because $\mathcal{S}_C$ is open, such a neighborhood exists and has
positive Lebesgue measure.  Since the Dirichlet density $f_p$ is strictly
positive on all of $\mathcal{S}$, we have
\[
  \mathbb{P}_p\bigl(\widetilde p\in U\bigr)>0.
\]
Whenever $\widetilde p\in U\subset\mathcal{S}_C$, the perturbing move is
accepted, so the one--step transition kernel satisfies
\[
  K(p,U)
  := \mathbb{P}_p(P_1\in U)
  \;\ge\;
  \mathbb{P}_p(\widetilde p\in U)
  \;>\; 0.
\]
Thus any open subset $U$ of $\mathcal{S}_C$ can be reached from any
starting point $p\in\mathcal{S}_C$ in a single step with positive
probability.  This implies $\psi$--irreducibility of the chain on
$\mathcal{S}_C$, with respect to Lebesgue measure restricted to
$\mathcal{S}_C$.

\medskip\noindent
\textbf{Step 4: Aperiodicity.}
For aperiodicity it is enough to show that there exists a nonnull set
$A\subset\mathcal{S}_C$ such that the chain has a positive probability of
remaining in $A$ in one step.  Fix $p\in\mathcal{S}_C$ and let
$B\subset\mathcal{S}\setminus\mathcal{S}_C$ be any measurable set of
positive Lebesgue measure contained in the complement of the KL ball.  Since
$f_p$ is strictly positive on $\mathcal{S}$, we have
$\mathbb{P}_p(\widetilde p\in B)>0$.  Whenever $\widetilde p\in B$ the move
is rejected and the chain stays at $p$.  Thus
\[
  K(p,\{p\})
  = \mathbb{P}_p(P_1=p)
  \;\ge\;
  \mathbb{P}_p(\widetilde p\in B)
  \;>\; 0.
\]
So every interior point $p\in\mathcal{S}_C$ has a self--transition
probability strictly larger than zero, which implies that the chain is
aperiodic on $\mathcal{S}_C$.

\medskip\noindent
\textbf{Step 5: Rescaling move and RDPGs.}
Including the rescaling move (Step~2 in the theorem statement) can only
increase the set of reachable points and does not affect irreducibility or
aperiodicity established above.\\

For random dot product graphs whose graphon $W$ can be approximated in cut
norm by $K$--block step graphons $\widetilde W_K$, we can apply the same
Markov chain to the block models $\widetilde W_K$.  Given any two RDPG
graphons $W,W^\dagger$ whose associated laws $G(W)$ and $G(W^\dagger)$ lie
in $\Gamma_C(G^\star)$, choose $K$ large enough that $W$ and $W^\dagger$ are
approximated in cut norm by step--graphons in $\mathcal{S}_C$ to within any
prescribed tolerance.  By the SBM case above, the chain on step--graphons
can reach an arbitrarily small neighborhood of the step approximation of
$W^\dagger$ starting from that of $W$ with positive probability; the
corresponding RDPG graphons are then reachable up to the chosen
approximation error.

This completes the proof.
\end{proof}

\begin{supptheorem}[Sharp small--KL expansion]\label{thm:sharp-small-KL}
Let $(\mathcal{X},\mathcal{A},P)$ be a probability space, and let
$f:\mathcal{X}\to\mathbb{R}$ be a measurable function with
\[
  m := \mathbb{E}_P[f],\qquad
  \sigma^2 := \Var_P(f) \in [0,\infty),
\]
such that the centred moment generating function
\[
  M(\lambda)
  := \mathbb{E}_P\bigl[e^{\lambda(f-m)}\bigr]
\]
is finite for all $\lambda$ in a neighborhood of $0$.
For $C>0$ define
\[
  \mathcal{U}_C(P)
  := \bigl\{Q:\ Q\ll P,\ \KL(Q\Vert P)\le C\bigr\},
  \qquad
  S_f(C)
  := \sup_{Q\in\mathcal{U}_C(P)} \int f\,\mathrm{d}Q,
\]
where
$\KL(Q\Vert P) := \int \log\!\bigl(\frac{\mathrm{d}Q}{\mathrm{d}P}\bigr)\,\mathrm{d}Q$.
Then, as $C\downarrow 0$,
\begin{equation}
  S_f(C)
  =
  m + \sqrt{2\,\Var_P(f)}\,\sqrt{C}
    + o\!\bigl(\sqrt{C}\bigr).
  \label{eq:sharp-small-KL-expansion}
\end{equation}

Moreover, if $\Var_P(f)>0$, the coefficient $\sqrt{2\,\Var_P(f)}$ is sharp in the sense that
\[
  \lim_{C\downarrow 0}
  \frac{S_f(C)-m}{\sqrt{2\,\Var_P(f)}\,\sqrt{C}}
  = 1.
\]
In particular, for any $k<\sqrt{2\,\Var_P(f)}$ there exists a sequence $C_j\downarrow 0$ such that
\[
  S_f(C_j)
  > m + k\,\sqrt{C_j}
\]
for all $j$ large enough.
\end{supptheorem}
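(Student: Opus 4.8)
The plan is to convert the variational problem into a one–dimensional convex dual via the Donsker--Varadhan (entropic tilting) formula, and then carry out a Laplace-type expansion of the dual as $C\downarrow 0$. If $\Var_P(f)=0$ then $f=m$ $P$-a.s., so $\int f\,\mathrm{d}Q=m$ for every $Q\ll P$ and \eqref{eq:sharp-small-KL-expansion} holds trivially; I henceforth assume $\sigma^2:=\Var_P(f)>0$.

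\emph{Step 1 (dual representation).} Writing $\Lambda(\lambda):=\log\mathbb{E}_P[e^{\lambda f}]=\lambda m+\log M(\lambda)$, which is finite and smooth on an open interval $(-\lambda_0,\lambda_0)\ni 0$, the Donsker--Varadhan inequality $\int\lambda f\,\mathrm{d}Q\le\Lambda(\lambda)+\KL(Q\Vert P)$ gives, for every $Q\in\mathcal{U}_C(P)$ and every $\lambda\in(0,\lambda_0)$, the bound $\int f\,\mathrm{d}Q\le m+\frac{C+\log M(\lambda)}{\lambda}$; taking $\log M\equiv+\infty$ outside $(-\lambda_0,\lambda_0)$ this yields $S_f(C)\le m+\inf_{\lambda>0}\frac{C+\log M(\lambda)}{\lambda}$. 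For the matching lower bound I would use the tilted family $\mathrm{d}Q_\lambda/\mathrm{d}P=e^{\lambda f}/\mathbb{E}_P[e^{\lambda f}]$ and the elementary identity $\KL(Q_\lambda\Vert P)=\lambda\Lambda'(\lambda)-\Lambda(\lambda)=:h(\lambda)$, noting $h(0)=0$ and $h'(\lambda)=\lambda\Lambda''(\lambda)=\lambda\Var_{Q_\lambda}(f)>0$ for $\lambda>0$, so $h$ is continuous and strictly increasing. Hence for all $C$ small enough there is a unique $\lambda_C\in(0,\lambda_0)$ with $h(\lambda_C)=C$, the tilted law $Q_{\lambda_C}$ lies in $\mathcal{U}_C(P)$, and $\int f\,\mathrm{d}Q_{\lambda_C}=\Lambda'(\lambda_C)=m+\frac{C+\log M(\lambda_C)}{\lambda_C}$ (the second equality is precisely $h(\lambda_C)=C$), which shows the infimum is attained at $\lambda_C$ and $S_f(C)=m+\inf_{\lambda>0}\frac{C+\log M(\lambda)}{\lambda}$ — the continuum version of \eqref{eq:tilting-dual}.

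\emph{Step 2 (small–$C$ asymptotics).} Smoothness of $M$ at $0$ with $M(0)=1$, $M'(0)=0$, $M''(0)=\sigma^2$ gives $\log M(\lambda)=\tfrac12\sigma^2\lambda^2+o(\lambda^2)$. For the upper bound I substitute $\lambda=\sqrt{2C/\sigma^2}\to 0$ into the dual objective: $\frac{C+\log M(\lambda)}{\lambda}=\frac{C}{\lambda}+\tfrac12\sigma^2\lambda+o(\lambda)=\sqrt{2\sigma^2 C}\,(1+o(1))$, so $S_f(C)-m\le\sqrt{2\Var_P(f)}\,\sqrt{C}+o(\sqrt C)$. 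For the lower bound, fix $\epsilon\in(0,\tfrac12\sigma^2)$ and pick $\delta\in(0,\lambda_0)$ with $\log M(\lambda)\ge(\tfrac12\sigma^2-\epsilon)\lambda^2$ for $0<\lambda\le\delta$; on this range AM--GM yields $\frac{C+\log M(\lambda)}{\lambda}\ge\frac{C}{\lambda}+(\tfrac12\sigma^2-\epsilon)\lambda\ge\sqrt{2(\sigma^2-2\epsilon)C}$, while for $\lambda\ge\delta$ convexity of $\log M$ with $\log M(0)=0$ makes $\lambda\mapsto\log M(\lambda)/\lambda$ nondecreasing, so $\frac{C+\log M(\lambda)}{\lambda}\ge\frac{\log M(\delta)}{\delta}>0$ (strict Jensen, $f$ non-constant). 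Hence $\inf_{\lambda>0}\frac{C+\log M(\lambda)}{\lambda}\ge\min\{\sqrt{2(\sigma^2-2\epsilon)C},\,\log M(\delta)/\delta\}$, which equals the first term once $C$ is small. Sending $C\downarrow 0$ and then $\epsilon\downarrow 0$ gives $\liminf_{C\downarrow 0}(S_f(C)-m)/\sqrt C\ge\sqrt{2\Var_P(f)}$, matching the upper bound, and so proves \eqref{eq:sharp-small-KL-expansion} and the exact limit. The closing sharpness claim is then immediate: since $(S_f(C)-m)/(\sqrt{2\Var_P(f)}\,\sqrt C)\to 1$, for any $k<\sqrt{2\Var_P(f)}$ we have $S_f(C)>m+k\sqrt C$ for all sufficiently small $C$, a fortiori along any sequence $C_j\downarrow 0$.

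\emph{Main obstacle.} The delicate point is Step 1 when $f$ is unbounded above: one must verify that the exponentially tilted measures $Q_{\lambda_C}$ genuinely lie inside the KL ball (not merely close to it) and that strong duality holds with the infimum attained for all small $C$, using only the assumed finiteness of $M$ near $0$. Once the dual representation is secured, Step 2 is routine bookkeeping of the remainder terms, the only care being to keep $\delta$ inside the finiteness interval $(-\lambda_0,\lambda_0)$.
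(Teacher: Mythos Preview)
Your proposal is correct and follows essentially the same route as the paper: Donsker--Varadhan for the dual upper bound, exponential tilting $Q_\lambda$ with $\KL(Q_\lambda\Vert P)=\lambda\Lambda'(\lambda)-\Lambda(\lambda)$ for the lower bound, and a second-order Taylor expansion of the cumulant generating function to extract the $\sqrt{2\sigma^2 C}$ coefficient. The only organisational difference is that you first establish exact strong duality $S_f(C)=m+\inf_{\lambda>0}g(\lambda)$ and then bound the dual infimum from both sides (using AM--GM and convexity of $\log M$ for the lower side), whereas the paper keeps the liminf and limsup arguments separate, expanding $\int f\,\mathrm{d}Q_{\lambda(C)}$ directly rather than going through the dual; both are minor variants of the same computation.
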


\begin{proof}
If $f$ is almost surely constant under $P$, say $f\equiv m$, then
$S_f(C)=m$ for all $C\ge 0$ and \eqref{eq:sharp-small-KL-expansion}
holds trivially with $\sigma^2=0$.  Hence we assume $\sigma^2>0$.

Write $\tilde f:=f-m$, so that $\mathbb{E}_P[\tilde f]=0$ and
$\Var_P(\tilde f)=\sigma^2$.  Let
\[
  M(\lambda)
  := \mathbb{E}_P\bigl[e^{\lambda \tilde f}\bigr],
  \qquad
  \Lambda(\lambda)
  := \log M(\lambda),
\]
be the moment generating function and cumulant generating function of
$\tilde f$ under $P$.  By assumption, $M(\lambda)<\infty$ for $\lambda$
in a neighborhood of $0$, and $\Lambda$ is analytic there with Taylor
expansion
\[
  \Lambda(\lambda)
  = \tfrac{1}{2}\sigma^2\lambda^2 + \mathcal{O}(|\lambda|^3),
  \qquad \lambda\to 0.
\]

\medskip\noindent
\emph{Dual representation.}
For each $\lambda>0$ and any $Q\ll P$, the Donsker--Varadhan
variational inequality gives
\[
  \lambda\int f\,\mathrm{d}Q - \KL(Q\Vert P)
  \;\le\;
  \log \mathbb{E}_P\!\bigl[e^{\lambda f}\bigr].
\]
Rearranging,
\[
  \int f\,\mathrm{d}Q
  \;\le\;
  \frac{1}{\lambda}\Bigl\{
    \log\mathbb{E}_P\!\bigl[e^{\lambda f}\bigr]
    + \KL(Q\Vert P)
  \Bigr\}.
\]
If $\KL(Q\Vert P)\le C$, this yields
\[
  \int f\,\mathrm{d}Q
  \;\le\;
  m + \frac{1}{\lambda}\bigl\{\Lambda(\lambda) + C\bigr\},
\]
because
$\log\mathbb{E}_P[e^{\lambda f}]
 = \lambda m + \Lambda(\lambda)$.
Taking the supremum over $Q\in\mathcal{U}_C(P)$ and then the infimum
over $\lambda>0$ shows that
\begin{equation}
  S_f(C)
  \;\le\;
  m + \inf_{\lambda>0} g(\lambda,C),
  \qquad
  g(\lambda,C)
  := \frac{\Lambda(\lambda)+C}{\lambda}.
  \label{eq:Sf-upper-dual}
\end{equation}

\medskip\noindent
\emph{Exponential tilting and lower bound.}
For the lower bound we consider the exponentially tilted measures
\[
  \frac{\mathrm{d}Q_\lambda}{\mathrm{d}P}
  := \frac{e^{\lambda\tilde f}}{M(\lambda)},\qquad \lambda>0.
\]
Then $Q_\lambda\ll P$, and standard properties of exponential tilting
give
\[
  \mathbb{E}_{Q_\lambda}[\tilde f]
  = \Lambda'(\lambda),
  \qquad
  \KL(Q_\lambda\Vert P)
  = \lambda\Lambda'(\lambda) - \Lambda(\lambda).
\]
In particular,
\begin{equation}
  \int f\,\mathrm{d}Q_\lambda
  = m + \Lambda'(\lambda).
  \label{eq:Sf-lower-tilt-mean}
\end{equation}

\medskip\noindent
\emph{Asymptotics of the tilt.}
From the Taylor expansion of $\Lambda$ we obtain
\[
  \Lambda(\lambda)
  = \tfrac{1}{2}\sigma^2\lambda^2 + \mathcal{O}(\lambda^3),
  \qquad
  \Lambda'(\lambda)
  = \sigma^2\lambda + \mathcal{O}(\lambda^2),
  \qquad \lambda\to 0.
\]
Hence
\begin{equation}
  C(\lambda)
  := \KL(Q_\lambda\Vert P)
  = \lambda\Lambda'(\lambda) - \Lambda(\lambda)
  = \tfrac{1}{2}\sigma^2\lambda^2 + \mathcal{O}(\lambda^3),
  \quad \lambda\to 0,
  \label{eq:C-lambda-expansion}
\end{equation}
and
\begin{equation}
  \int f\,\mathrm{d}Q_\lambda
  = m + \sigma^2\lambda + \mathcal{O}(\lambda^2),
  \qquad \lambda\to 0.
  \label{eq:EQlambda-expansion}
\end{equation}
Since $\sigma^2>0$ and $\Lambda''(\lambda) = \Var_{Q_\lambda}(\tilde f)>0$
for all sufficiently small $\lambda>0$, the function
$\lambda\mapsto C(\lambda)$ is strictly increasing on $(0,\lambda_0)$
for some $\lambda_0>0$, with $C(\lambda)\downarrow 0$ as
$\lambda\downarrow 0$.  Therefore it is invertible on this interval,
and we may define for small $C>0$ the inverse
$\lambda(C)\in(0,\lambda_0)$ such that $C(\lambda(C))=C$.\\

From \eqref{eq:C-lambda-expansion} we find
\[
  C
  = \tfrac{1}{2}\sigma^2\lambda(C)^2 + \mathcal{O}(\lambda(C)^3)
  \quad\Rightarrow\quad
  \lambda(C)
  = \frac{\sqrt{2C}}{\sigma} + \mathcal{O}(C),
  \qquad C\downarrow 0,
\]
using $\lambda(C)=\mathcal{O}(\sqrt{C})$.
Substituting into \eqref{eq:EQlambda-expansion} gives
\begin{equation}
  \int f\,\mathrm{d}Q_{\lambda(C)}
  = m + \sqrt{2\sigma^2 C} + \mathcal{O}(C),
  \qquad C\downarrow 0.
  \label{eq:lower-bound-asymptotic}
\end{equation}

\medskip\noindent
\emph{Lower bound on $S_f(C)$.}
For each small $C>0$, $Q_{\lambda(C)}\in\mathcal{U}_C(P)$ by
construction, so
\[
  S_f(C)
  \;\ge\;
  \int f\,\mathrm{d}Q_{\lambda(C)}
  = m + \sqrt{2\sigma^2 C} + \mathcal{O}(C),
\]
which implies
\begin{equation}
  \liminf_{C\downarrow 0}
  \frac{S_f(C)-m}{\sqrt{C}}
  \;\ge\; \sqrt{2\sigma^2}.
  \label{eq:lower-liminf}
\end{equation}

\medskip\noindent
\emph{Upper bound on $S_f(C)$.}
Using \eqref{eq:Sf-upper-dual} we have, for any $\lambda>0$,
\[
  S_f(C)-m
  \;\le\;
  g(\lambda,C)
  = \frac{\Lambda(\lambda)+C}{\lambda}.
\]
Take $\lambda = t\sqrt{C}$ with $t>0$ fixed and $C$ small.  Then, as
$C\downarrow 0$,
\[
  \Lambda(\lambda)
  = \tfrac{1}{2}\sigma^2 t^2 C + \mathcal{O}(C^{3/2}),
\]
and hence
\[
  g(t\sqrt{C},C)
  = \frac{\tfrac{1}{2}\sigma^2 t^2 C + C + \mathcal{O}(C^{3/2})}
         {t\sqrt{C}}
  = \Bigl(\tfrac{1}{2}\sigma^2 t + \frac{1}{t}\Bigr)\sqrt{C}
    + \mathcal{O}(C).
\]
Let
\[
  F(t) := \tfrac{1}{2}\sigma^2 t + \frac{1}{t},\qquad t>0.
\]
A simple calculus check shows that $F$ attains its unique minimum at
$t_\star = \sqrt{2}/\sigma$, with
\[
  F(t_\star)
  = \sqrt{2\sigma^2}.
\]
Therefore, for any $\varepsilon>0$ we can choose $t_\varepsilon$ close
enough to $t_\star$ such that
$F(t_\varepsilon)\le\sqrt{2\sigma^2}+\varepsilon$.
For this choice,
\[
  g(t_\varepsilon\sqrt{C},C)
  = F(t_\varepsilon)\sqrt{C} + \mathcal{O}(C)
  \le \bigl(\sqrt{2\sigma^2}+\varepsilon\bigr)\sqrt{C}
       + \mathcal{O}(C).
\]
Taking the infimum over $\lambda>0$ and then letting $C\downarrow 0$
gives
\begin{equation}
  \limsup_{C\downarrow 0}
  \frac{S_f(C)-m}{\sqrt{C}}
  \;\le\; \sqrt{2\sigma^2} + \varepsilon.
\end{equation}
Since $\varepsilon>0$ is arbitrary,
\begin{equation}
  \limsup_{C\downarrow 0}
  \frac{S_f(C)-m}{\sqrt{C}}
  \;\le\; \sqrt{2\sigma^2}.
  \label{eq:upper-limsup}
\end{equation}

\medskip\noindent
Combining \eqref{eq:lower-liminf} and \eqref{eq:upper-limsup} yields
\[
  \lim_{C\downarrow 0}
  \frac{S_f(C)-m}{\sqrt{C}}
  = \sqrt{2\sigma^2},
\]
which is exactly \eqref{eq:sharp-small-KL-expansion}.  The sharpness
statement follows immediately from the existence of this limit: if
$k<\sqrt{2\sigma^2}$, then for all sufficiently small $C$ we must have
$(S_f(C)-m)/\sqrt{C} > k$, and hence $S_f(C)>m+k\sqrt{C}$.

This completes the proof.
\end{proof}

\subsection*{Proof of Theorem~\ref{thm:WH-critical-robustness}}

\begin{proof}[Proof of Theorem~\ref{thm:WH-critical-robustness}]
Write $R_0:=R(\theta_0)$ and $\Delta_0:=\Delta(\theta_0)$.  By
Assumption~\ref{ass:critical-R} and the chain rule,
\[
  \nabla_\theta R(\theta_0)
  = c_0\,\Delta_0^{-2}\,\nabla_\theta\rho(\theta_0)
    + \nabla_\theta H(\theta_0).
\]
By item~(i) in the theorem assumptions, we have
\[
  G := \nabla_\theta R(\theta_0),
  \qquad
  \|G\|\asymp\Delta_0^{-2}
  \quad\text{as }\Delta_0\downarrow 0.
\]

By Assumption~\ref{ass:local-BvM}, under $\Pi_{0,n}$ we may write
\[
  \theta
  = \theta_0 + r_n Z_n,
  \qquad
  Z_n\Rightarrow Z\sim N(0,\Sigma)
\]
in $P_{\theta_0}^{(n)}$--probability, and item~(ii) ensures that
$\|Z_n\|$ has uniformly bounded second and fourth moments (again in
$P_{\theta_0}^{(n)}$--probability).  A Taylor expansion of $R$ around
$\theta_0$ on a neighborhood where $R$ is $C^2$ yields
\[
  R(\theta)
  = R_0 + r_n G^\top Z_n + R_n^{\mathrm{rem}},
\]
with
\[
  R_n^{\mathrm{rem}} = O\bigl(r_n^2\|Z_n\|^2\bigr)
\]
uniformly on that neighborhood.

\medskip\noindent
\emph{Baseline posterior risk.}
The Bayes estimator is
\[
  a_n^\star
  = \int R(\theta)\,\Pi_{0,n}(\mathrm{d}\theta)
  = R_0 + r_n G^\top\mathbb{E}_{\Pi_{0,n}}[Z_n]
    + \mathbb{E}_{\Pi_{0,n}}\bigl[R_n^{\mathrm{rem}}\bigr].
\]
By the bound on $R_n^{\mathrm{rem}}$ and the uniform moment bounds on
$Z_n$,
\[
  \mathbb{E}_{\Pi_{0,n}}\bigl[R_n^{\mathrm{rem}}\bigr]
  = O_{P_{\theta_0}^{(n)}}\bigl(r_n^2\bigr)
  = o_{P_{\theta_0}^{(n)}}(r_n),
\]
so
\[
  a_n^\star
  = R_0 + r_n G^\top\mathbb{E}_{\Pi_{0,n}}[Z_n]
    + o_{P_{\theta_0}^{(n)}}(r_n).
\]
Hence the centred fluctuation of $R(\theta)$ under $\Pi_{0,n}$ can be
written as
\[
  R(\theta)-a_n^\star
  = r_n G^\top\bigl(Z_n-\mathbb{E}_{\Pi_{0,n}}[Z_n]\bigr)
    + \bigl(R_n^{\mathrm{rem}}
           -\mathbb{E}_{\Pi_{0,n}}[R_n^{\mathrm{rem}}]\bigr)
    + o_{P_{\theta_0}^{(n)}}(r_n).
\]
The remainder term satisfies
\[
  R_n^{\mathrm{rem}}
  -\mathbb{E}_{\Pi_{0,n}}[R_n^{\mathrm{rem}}]
  = O_{P_{\theta_0}^{(n)}}\bigl(r_n^2\|Z_n\|^2\bigr),
\]
so that
\[
  \Var_{\Pi_{0,n}}\bigl(R_n^{\mathrm{rem}}\bigr)
  = O_{P_{\theta_0}^{(n)}}\bigl(r_n^4\bigr)
  = o_{P_{\theta_0}^{(n)}}\bigl(r_n^2\|G\|^2\bigr),
\]
because $\|G\|\asymp\Delta_0^{-2}\to\infty$ and $r_n\to 0$,
$\Delta_0\to 0$.  Denoting
\[
  Z_n^0 := Z_n - \mathbb{E}_{\Pi_{0,n}}[Z_n],
\]
we therefore obtain
\[
  R(\theta)-a_n^\star
  = r_n G^\top Z_n^0
    + o_{P_{\theta_0}^{(n)}}\bigl(r_n\|G\|\bigr),
\]
and hence
\[
  \rho_{0,n}
  = \Var_{\Pi_{0,n}}(R(\theta))
  = r_n^2 G^\top \Sigma_n G
    + o_{P_{\theta_0}^{(n)}}\bigl(r_n^2\|G\|^2\bigr),
\]
where $\Sigma_n:=\Var_{\Pi_{0,n}}(Z_n)\to\Sigma$ in
$P_{\theta_0}^{(n)}$--probability by the local BvM assumption and the
moment bounds.

Write
\[
  W_0 := G^\top\Sigma G.
\]
By the nondegeneracy of $\Sigma$ there exist constants
$0<c_1\le c_2<\infty$ such that
\[
  c_1\|G\|^2 \le W_0 \le c_2\|G\|^2.
\]
Together with $\|G\|\asymp\Delta_0^{-2}$ this implies
$W_0\asymp\Delta_0^{-4}$, so there exists a constant $V_0\in(0,\infty)$
(depending on the local geometry along the path $\theta_0$) such that
\[
  W_0
  = \frac{V_0}{\Delta_0^4}\,\bigl(1+o(1)\bigr)
  \quad\text{as }\Delta_0\downarrow 0.
\]
Combining this with the approximation for $\rho_{0,n}$ yields
\[
  \rho_{0,n}
  =
  \frac{V_0\,r_n^2}{\Delta_0^4}\,
  \bigl(1+o_{P_{\theta_0}^{(n)}}(1)\bigr),
\]
which is \eqref{eq:WH-R-critical-rho0-main}.

\medskip\noindent
\emph{Law of the squared loss.}
Define
\[
  L_n(\theta)
  := \bigl(a_n^\star-R(\theta)\bigr)^2.
\]
By definition of $a_n^\star$,
\[
  \rho_{0,n}
  = \Var_{\Pi_{0,n}}(R(\theta))
  = \mathbb{E}_{\Pi_{0,n}}\bigl[L_n(\theta)\bigr].
\]

From the expansion above we have, under $\Pi_{0,n}$,
\[
  a_n^\star-R(\theta)
  = -r_n G^\top Z_n^0
    + o_{P_{\theta_0}^{(n)}}\bigl(r_n\|G\|\bigr),
\]
so
\[
  L_n(\theta)
  = \bigl(a_n^\star-R(\theta)\bigr)^2
  = r_n^2 \bigl(G^\top Z_n^0\bigr)^2
    + o_{P_{\theta_0}^{(n)}}\bigl(r_n^2\|G\|^2\bigr).
\]
Let $\tau_n^2:=\rho_{0,n}$ and define the normalized loss
\[
  Y_n
  := -\tau_n^{-1}\bigl(R(\theta)-a_n^\star\bigr),
  \qquad
  L_n(\theta) = \tau_n^2 Y_n^2.
\]
By the local BvM assumption and the uniform moment bounds,
$G^\top Z_n^0$ is asymptotically normal with variance $W_0$, and the
remainder is negligible at scale $\tau_n\asymp r_n\|G\|$.  A
delta--method / continuous mapping argument thus gives
\[
  Y_n \Rightarrow Y\sim N(0,1)
\]
under $\Pi_{0,n}$ in $P_{\theta_0}^{(n)}$--probability, and the moment
bounds upgrade this weak convergence to convergence of moments up to
order~$4$.  In particular,
\[
  \mathbb{E}_{\Pi_{0,n}}[Y_n^2] = 1
  \quad\text{and}\quad
  \Var_{\Pi_{0,n}}(Y_n^2)
  \to \Var(Y^2) = 2
\]
in $P_{\theta_0}^{(n)}$--probability.  Since $L_n=\tau_n^2 Y_n^2$ and
$\tau_n^2=\rho_{0,n}$, we obtain
\[
  \mathbb{E}_{\Pi_{0,n}}[L_n]
  = \rho_{0,n}
  \quad\text{and}\quad
  \Var_{\Pi_{0,n}}(L_n)
  = 2\rho_{0,n}^2\bigl(1+o_{P_{\theta_0}^{(n)}}(1)\bigr).
\]

Moreover, for each fixed $n$ the (centered) moment generating function of
$L_n$ under $\Pi_{0,n}$ is finite in a neighborhood of the origin; this
holds, for example, if $R$ has at most polynomial growth and
$\Pi_{0,n}$ has sub-Gaussian tails locally around $\theta_0$.
Together with item~(iii) in the theorem assumptions, this implies that
the assumptions of Theorem~\ref{thm:sharp-small-KL} are satisfied with
$P=\Pi_{0,n}$ and $f=L_n$, and that the $o(\sqrt C)$ remainder in that
theorem can be taken uniformly over the family of normalized losses
$L_n/\rho_{0,n}$ for small $C$.

\medskip\noindent
\emph{Apply the sharp small--KL expansion.}
By Theorem~\ref{thm:sharp-small-KL}, for each fixed $n$ and all
sufficiently small $C>0$,
\[
  \sup_{\widetilde\Pi:\,\KL(\widetilde\Pi\Vert\Pi_{0,n})\le C}
    \int L_n(\theta)\,\widetilde\Pi(\mathrm{d}\theta)
  =
  \mathbb{E}_{\Pi_{0,n}}[L_n]
  + \sqrt{2\,\Var_{\Pi_{0,n}}(L_n)}\,\sqrt{C}
  + o(\sqrt{C}),
\]
where the $o(\sqrt{C})$ term tends to $0$ as $C\downarrow 0$ and, by
the uniform exponential--moment bounds in item~(iii), can be chosen
uniformly in $n$ for $C$ in a sufficiently small interval $(0,C_0]$.

By definition,
\[
  \rho_{\mathrm{rob},n}(C)
  :=
  \sup_{\widetilde\Pi:\,\KL(\widetilde\Pi\Vert\Pi_{0,n})\le C}
    \int L_n(\theta)\,\widetilde\Pi(\mathrm{d}\theta),
\]
so substituting
$\mathbb{E}_{\Pi_{0,n}}[L_n]=\rho_{0,n}$ and
$\Var_{\Pi_{0,n}}(L_n)=2\rho_{0,n}^2(1+o_{P_{\theta_0}^{(n)}}(1))$ gives,
for all sufficiently small $C$,
\[
  \rho_{\mathrm{rob},n}(C)
  =
  \rho_{0,n}
  + 2\,\rho_{0,n}\sqrt{C}
  + o_{P_{\theta_0}^{(n)}}\bigl(\rho_{0,n}\sqrt{C}\bigr),
\]
where the $o_{P_{\theta_0}^{(n)}}(\rho_{0,n}\sqrt{C})$ term is uniform
in $n$ for $C\in(0,C_0]$.  Taking $C=\mathcal{C}_n\downarrow 0$ yields
\eqref{eq:WH-R-critical-rob-main} and the convergence
\[
  \frac{
    \rho_{\mathrm{rob},n}(\mathcal{C}_n)-\rho_{0,n}
  }{
    \rho_{0,n}\sqrt{\mathcal{C}_n}
  }
  \xrightarrow[n\to\infty]{P_{\theta_0}^{(n)}} 2,
\]
which proves the sharp inflation statement (part~(2)).

\medskip\noindent
\emph{Sharpness.}
From Theorem~\ref{thm:sharp-small-KL} applied to $f=L_n$ we have, for
each fixed $n$,
\[
  \rho_{\mathrm{rob},n}(C)
  =
  \rho_{0,n}
  + \sqrt{2\,\Var_{\Pi_{0,n}}(L_n)}\,\sqrt{C}
  + o\bigl(\sqrt{C}\bigr)
  \quad\text{as }C\downarrow 0.
\]
Dividing by $\rho_{0,n}\sqrt{C}$ and using
$\Var_{\Pi_{0,n}}(L_n)=2\rho_{0,n}^2(1+o_{P_{\theta_0}^{(n)}}(1))$ we
obtain
\[
  \frac{
    \rho_{\mathrm{rob},n}(C)-\rho_{0,n}
  }{
    \rho_{0,n}\sqrt{C}
  }
  =
  \frac{\sqrt{2\,\Var_{\Pi_{0,n}}(L_n)}}{\rho_{0,n}}
  + o_{P_{\theta_0}^{(n)}}(1)
  \xrightarrow[n\to\infty]{P_{\theta_0}^{(n)}} 2
\]
for every fixed $C>0$ small enough.  Equivalently, for any $\varepsilon>0$
there exist $C_0>0$ and $n_0$ such that, for all $n\ge n_0$ and all
$C\in(0,C_0]$,
\[
  P_{\theta_0}^{(n)}
  \left(
    \frac{
      \rho_{\mathrm{rob},n}(C)-\rho_{0,n}
    }{
      \rho_{0,n}\sqrt{C}
    }
    \ge 2-\varepsilon
  \right)
  \to 1.
\]
Now fix $k<2$ and choose $\varepsilon\in(0,2-k)$.  By the previous
display, there exist $C_0>0$ and $n_0$ such that, for all $n\ge n_0$ and
all $C\in(0,C_0]$, the event
\[
  \rho_{\mathrm{rob},n}(C)
  >
  \rho_{0,n} + k\,\rho_{0,n}\sqrt{C}
\]
has $P_{\theta_0}^{(n)}$--probability tending to $1$.  In particular,
we may pick any deterministic sequence $\mathcal{C}_n\downarrow 0$ with
$\mathcal{C}_n\le C_0$ for all $n$; for that sequence we obtain
\[
  \rho_{\mathrm{rob},n}(\mathcal{C}_n)
  >
  \rho_{0,n} + k\,\rho_{0,n}\sqrt{\mathcal{C}_n}
\]
for all sufficiently large $n$ with $P_{\theta_0}^{(n)}$--probability
tending to~$1$.  This proves the sharpness claim in part~(3), and shows
that the coefficient $2$ in \eqref{eq:WH-R-critical-rob-main} is
asymptotically optimal.

This completes the proof.
\end{proof}

\subsection*{Proof of Lemma~\ref{lem:ER-SBM-KL}}
\label{subsec:ER-SBM-labelled-appendix}

\begin{proof}
Under both $P_0^{(n)}$ and $P_1^{(n)}$, edges are independent; only the
Bernoulli parameters differ. Hence
\[
  D_n
  = \KL\bigl(P_1^{(n)}\Vert P_0^{(n)}\bigr)
  = \sum_{1\le i<j\le n}
      \KL\bigl(
        \mathrm{Bern}(p_{1,ij})\Vert\mathrm{Bern}(p_n)
      \bigr),
\]
where $p_{1,ij}=p_n^{\mathrm{in}}$ if $\sigma(i)=\sigma(j)$ and
$p_{1,ij}=p_n^{\mathrm{out}}$ otherwise, with $\sigma$ the community
assignment. Grouping within-- and between--block pairs gives
\[
  D_n
  =
  N_n^{\mathrm{in}}\,
    {\rm KL}\bigl(\mathrm{Bern}(p_n^{\mathrm{in}})\Vert\mathrm{Bern}(p_n)\bigr)
  +
  N_n^{\mathrm{out}}\,
    {\rm KL}\bigl(\mathrm{Bern}(p_n^{\mathrm{out}})\Vert\mathrm{Bern}(p_n)\bigr).
\]

For the asymptotics, write $r_n:=p_n=c/n$ and, for a generic
$\delta\in\{\lambda,-\lambda\}$,
\[
  q_n := r_n + \frac{\delta}{n}
  = \frac{c+\delta}{n}.
\]
For a single edge with parameter $q_n$ under $P_1^{(n)}$ and $r_n$ under
$P_0^{(n)}$, the KL divergence is
\[
  K_n(\delta)
  := \KL\bigl(\mathrm{Bern}(q_n)\Vert\mathrm{Bern}(r_n)\bigr)
  = q_n\log\frac{q_n}{r_n}
    + (1-q_n)\log\frac{1-q_n}{1-r_n}.
\]

The first term is
\[
  q_n\log\frac{q_n}{r_n}
  = \frac{c+\delta}{n}
    \log\frac{(c+\delta)/n}{c/n}
  = \frac{c+\delta}{n}\log\frac{c+\delta}{c}.
\]
For the second term, use $\log(1-x)=-x-x^2/2+O(x^3)$ as $x\to 0$:
\[
  \log(1-q_n) = -\frac{c+\delta}{n} - \frac{(c+\delta)^2}{2n^2}
                + O\!\left(\frac{1}{n^3}\right),
  \qquad
  \log(1-r_n) = -\frac{c}{n} - \frac{c^2}{2n^2}
                + O\!\left(\frac{1}{n^3}\right),
\]
so
\[
  \log\frac{1-q_n}{1-r_n}
  = -\frac{\delta}{n}
    - \frac{(c+\delta)^2-c^2}{2n^2}
    + O\!\left(\frac{1}{n^3}\right)
  = -\frac{\delta}{n} + O\!\left(\frac{1}{n^2}\right).
\]
Multiplying by $1-q_n = 1 + O(1/n)$ gives
\[
  (1-q_n)\log\frac{1-q_n}{1-r_n}
  = -\frac{\delta}{n} + O\!\left(\frac{1}{n^2}\right).
\]
Hence
\[
  K_n(\delta)
  = \frac{c+\delta}{n}\log\frac{c+\delta}{c}
    - \frac{\delta}{n}
    + O\!\left(\frac{1}{n^2}\right),
\]
and thus
\[
  n\,K_n(\delta)
  = (c+\delta)\log\frac{c+\delta}{c}
    - \delta
    + O\!\left(\frac{1}{n}\right).
\]

Now
\[
  D_n
  = N_n^{\mathrm{in}} K_n(\lambda) + N_n^{\mathrm{out}} K_n(-\lambda),
\]
so
\[
  \frac{D_n}{n}
  = \frac{N_n^{\mathrm{in}}}{n}K_n(\lambda)
    + \frac{N_n^{\mathrm{out}}}{n}K_n(-\lambda).
\]
In the symmetric two--block SBM with equal block sizes,
\[
  N_n^{\mathrm{in}} = \frac{n^2}{4} + O(n),
  \qquad
  N_n^{\mathrm{out}} = \frac{n^2}{4} + O(n),
\]
so
\[
  \frac{N_n^{\mathrm{in}}}{n} = \frac{n}{4} + O(1),
  \qquad
  \frac{N_n^{\mathrm{out}}}{n} = \frac{n}{4} + O(1).
\]
Therefore
\begin{align*}
  \frac{D_n}{n}
  &= \left(\frac{n}{4}+O(1)\right)K_n(\lambda)
     + \left(\frac{n}{4}+O(1)\right)K_n(-\lambda)\\
  &= \frac{1}{4}\Bigl\{nK_n(\lambda) + nK_n(-\lambda)\Bigr\}
     + O\bigl(K_n(\lambda)+K_n(-\lambda)\bigr).
\end{align*}
Using $K_n(\delta)=O(1/n)$ and the expansion for $nK_n(\delta)$ above,
we obtain
\begin{align*}
  \frac{D_n}{n}
  &= \frac{1}{4}\Bigl[
       (c+\lambda)\log\frac{c+\lambda}{c}
       - \lambda
       + (c-\lambda)\log\frac{c-\lambda}{c}
       + \lambda
     \Bigr]
     + O\!\left(\frac{1}{n}\right)\\
  &= \frac{1}{4}\Bigl[
       (c+\lambda)\log\frac{c+\lambda}{c}
       + (c-\lambda)\log\frac{c-\lambda}{c}
     \Bigr]
     + O\!\left(\frac{1}{n}\right).
\end{align*}
This shows that
\[
  \frac{D_n}{n} \xrightarrow[n\to\infty]{} I(\lambda)
  := \frac{1}{4}\Bigl[
       (c+\lambda)\log\frac{c+\lambda}{c}
       + (c-\lambda)\log\frac{c-\lambda}{c}
     \Bigr],
\]
and hence $D_n = I(\lambda)\,n + o(n)$.

Finally, a Taylor expansion of $I(\lambda)$ in $\lambda$ around $0$ gives
\[
  I(\lambda)
  = \frac{\lambda^2}{4c}
    + O\!\left(\frac{\lambda^4}{c^3}\right),
  \qquad \lambda\to 0,
\]
as claimed.
\end{proof}
\subsection*{Proof of Lemma~\ref{lem:ER-SBM-Chernoff}}

\begin{proof}
Because edges are independent under both $P_0^{(n)}$ and $P_1^{(n)}$,
\[
  \sum_A P_0^{(n)}(A)^{1-t}P_1^{(n)}(A)^t
  = \prod_{e}
      \sum_{a\in\{0,1\}}
        P_{0,e}(a)^{1-t}P_{1,e}(a)^t,
\]
where $P_{0,e}=\mathrm{Bern}(p_n)$ and
$P_{1,e}=\mathrm{Bern}(p_n^{\mathrm{in}})$ or
$\mathrm{Bern}(p_n^{\mathrm{out}})$ according to whether $e$ is a within-- or
between--block edge.  Thus
\[
  \mathcal{C}_n
  = \sup_{0\le t\le 1}
      \Bigl(
        N_n^{\mathrm{in}}\phi_n^{+}(t)
        +
        N_n^{\mathrm{out}}\phi_n^{-}(t)
      \Bigr),
\]
where, for $r,q\in(0,1)$,
\[
  \phi(r,q;t)
  := -\log\bigl(
        r^{1-t}q^t + (1-r)^{1-t}(1-q)^t
      \bigr),
\]
and
\[
  \phi_n^{+}(t) := \phi\bigl(p_n,p_n^{\mathrm{in}};t\bigr),
  \qquad
  \phi_n^{-}(t) := \phi\bigl(p_n,p_n^{\mathrm{out}};t\bigr).
\]

Set $u:=1/n$ and note $p_n = cu$, $p_n^{\mathrm{in}}=(c+\lambda)u$,
$p_n^{\mathrm{out}}=(c-\lambda)u$.  For fixed $c,\lambda,t$, expand
$\phi_n^{\pm}(t)$ as $u\to 0$.  Write
\[
  r := cu,
  \qquad
  q_\pm := (c\pm\lambda)u,
\]
so
\[
  \phi(r,q_\pm;t)
  = -\log\Bigl(
        r^{1-t}q_\pm^t
        + (1-r)^{1-t}(1-q_\pm)^t
      \Bigr).
\]
Let
\[
  S_\pm
  := r^{1-t}q_\pm^t + (1-r)^{1-t}(1-q_\pm)^t.
\]
Factor the second term:
\[
  S_\pm
  = (1-r)^{1-t}(1-q_\pm)^t\,
    \Bigl[
      1 + \frac{r^{1-t}q_\pm^t}{(1-r)^{1-t}(1-q_\pm)^t}
    \Bigr],
\]
so
\[
  \phi(r,q_\pm;t)
  = -(1-t)\log(1-r) - t\log(1-q_\pm)
    - \log\Bigl(1 + \frac{r^{1-t}q_\pm^t}{(1-r)^{1-t}(1-q_\pm)^t}\Bigr).
\]

Using $\log(1-x)=-x-x^2/2+O(x^3)$ and $r,q_\pm=O(u)$,
\[
  -(1-t)\log(1-r) - t\log(1-q_\pm)
  = (1-t)r + tq_\pm + O(u^2)
  = uc + ut(\pm\lambda) + O(u^2).
\]
Moreover,
\[
  r^{1-t}q_\pm^t
  = c^{1-t}(c\pm\lambda)^t u + O(u^2),
\]
and $(1-r)^{1-t}(1-q_\pm)^t = 1 + O(u)$, so
\[
  \frac{r^{1-t}q_\pm^t}{(1-r)^{1-t}(1-q_\pm)^t}
  = c^{1-t}(c\pm\lambda)^t u + O(u^2).
\]
Therefore
\[
  -\log\Bigl(1 + \frac{r^{1-t}q_\pm^t}{(1-r)^{1-t}(1-q_\pm)^t}\Bigr)
  = -c^{1-t}(c\pm\lambda)^t u + O(u^2).
\]

Combining,
\[
  \phi_n^{\pm}(t)
  = u\Bigl[
        c \pm t\lambda - c^{1-t}(c\pm\lambda)^t
      \Bigr]
    + O(u^2),
\]
hence
\[
  n\,\phi_n^{\pm}(t)
  = c \pm t\lambda - c^{1-t}(c\pm\lambda)^t
    + O\!\left(\frac{1}{n}\right).
\]

Using the block counts above,
\[
  \frac{N_n^{\mathrm{in}}}{n}
  = \frac{n-2}{4}
  = \frac{n}{4}+O(1),
  \qquad
  \frac{N_n^{\mathrm{out}}}{n}
  = \frac{n}{4},
\]
so
\[
  \frac{\mathcal{C}_n}{n}
  = \sup_{0\le t\le 1}
      \left\{
        \frac{N_n^{\mathrm{in}}}{n}\phi_n^{+}(t)
        + \frac{N_n^{\mathrm{out}}}{n}\phi_n^{-}(t)
      \right\}
  = \sup_{0\le t\le 1}
      \left\{
        \frac{1}{4}\bigl[n\phi_n^{+}(t) + n\phi_n^{-}(t)\bigr]
        + O\!\left(\frac{1}{n}\right)
      \right\}.
\]
Using the expansions for $n\phi_n^{\pm}(t)$ and letting $n\to\infty$,
\begin{align*}
  \lim_{n\to\infty}\frac{\mathcal{C}_n}{n}
  &= \sup_{0\le t\le 1}
     \frac{1}{4}\Bigl[
       \bigl(c + t\lambda - c^{1-t}(c+\lambda)^t\bigr)
       + \bigl(c - t\lambda - c^{1-t}(c-\lambda)^t\bigr)
     \Bigr]\\
  &= \sup_{0\le t\le 1}
     \frac{1}{4}\Bigl[
       2c - c^{1-t}\bigl((c+\lambda)^t + (c-\lambda)^t\bigr)
     \Bigr]
  = J(\lambda),
\end{align*}
which proves the asserted limit.

For the small--signal expansion, expand $(c\pm\lambda)^t$ in $\lambda$:
\[
  (c\pm\lambda)^t
  = c^t\Bigl(
        1 \pm t\frac{\lambda}{c}
        + \frac{t(t-1)}{2}\frac{\lambda^2}{c^2}
        + O\!\left(\frac{\lambda^3}{c^3}\right)
      \Bigr),
  \qquad \lambda\to 0.
\]
Summing,
\[
  (c+\lambda)^t + (c-\lambda)^t
  = 2c^t\Bigl(
      1 + \frac{t(t-1)}{2}\frac{\lambda^2}{c^2}
      + O\!\left(\frac{\lambda^4}{c^4}\right)
    \Bigr),
\]
so
\[
  c^{1-t}\bigl[(c+\lambda)^t + (c-\lambda)^t\bigr]
  = 2c\Bigl(
      1 + \frac{t(t-1)}{2}\frac{\lambda^2}{c^2}
      + O\!\left(\frac{\lambda^4}{c^4}\right)
    \Bigr).
\]
Therefore
\[
  J(\lambda;t)
  := \frac{1}{4}\Bigl\{
        2c - c^{1-t}\bigl[(c+\lambda)^t + (c-\lambda)^t\bigr]
      \Bigr\}
  = -\frac{t(t-1)}{4c}\,\lambda^2
    + O\!\left(\frac{\lambda^4}{c^3}\right),
\]
uniformly for $t$ in compact subsets of $(0,1)$.  Since
$-t(t-1)\ge 0$ on $[0,1]$ and attains its maximum $1/4$ at $t=1/2$,
\[
  J(\lambda)
  = \sup_{0\le t\le 1} J(\lambda;t)
  = \frac{\lambda^2}{16c}
    + O\!\left(\frac{\lambda^4}{c^3}\right),
  \qquad \lambda\to 0.
\]
This also yields the stated asymptotic relation
$J(\lambda) = \tfrac14 I(\lambda) + O(\lambda^4/c^3)$.
\end{proof}

\subsection*{Proof of Lemma~\ref{lem:graphon-information}}

\begin{proof}
Under the labelled SBM with parameters $(\pi,P_0)$ or $(\pi,P_\lambda)$ we may realise
$G_n$ as
\[
  Z_1,\dots,Z_n \overset{\text{i.i.d.}}{\sim}\pi,
  \qquad
  A_{ij}\mid Z_{1:n} \sim \mathrm{Bern}\bigl(P_\lambda(Z_i,Z_j)\bigr),
  \quad 1\le i<j\le n,
\]
independently across unordered pairs $(i,j)$.  The resulting law on graphs is
$\mathbb{P}^{(n)}_\lambda$, and similarly $\mathbb{P}^{(n)}_0$ for $(\pi,P_0)$.

For the step--graphons $W_0,W_\lambda$, the graphon sampling scheme is
\[
  U_1,\dots,U_n \overset{\text{i.i.d.}}{\sim}\mathrm{Unif}[0,1],
  \qquad
  A_{ij}\mid U_{1:n} \sim \mathrm{Bern}\bigl(W_\lambda(U_i,U_j)\bigr),
  \quad 1\le i<j\le n,
\]
again independently across edges.  Partition $[0,1]$ into $K$ subintervals of
lengths $\pi_k$ and define $Z_i$ to be the block index of $U_i$, i.e.\ the unique
$k$ such that $U_i$ lies in block $k$.  Then
$Z_1,\dots,Z_n\overset{\text{i.i.d.}}{\sim}\pi$, and
\[
  W_\lambda(U_i,U_j) = P_\lambda(Z_i,Z_j)
  \qquad\text{for all }1\le i<j\le n.
\]
Thus, conditional on $(Z_i)_{i=1}^n$, the adjacency matrix under the graphon model
has the same distribution as under the labelled SBM, and hence the marginal laws on
$G_n$ coincide:
\[
  \widetilde{\mathbb{P}}^{(n)}_{W_\lambda}
  = \mathbb{P}^{(n)}_\lambda,
  \qquad
  \widetilde{\mathbb{P}}^{(n)}_{W_0}
  = \mathbb{P}^{(n)}_0.
\]

Since the pairs of laws coincide exactly at finite $n$, any functional of the pair
is identical in both representations.  In particular,
\[
  D\bigl(\mathbb{P}^{(n)}_\lambda\big\Vert\mathbb{P}^{(n)}_0\bigr)
  = D\bigl(\widetilde{\mathbb{P}}^{(n)}_{W_\lambda}
           \big\Vert\widetilde{\mathbb{P}}^{(n)}_{W_0}\bigr),
\]
and, for every radius $\mathcal{C}_n$,
\[
  J_n\bigl(\mathbb{P}^{(n)}_\lambda,\mathbb{P}^{(n)}_0;\mathcal{C}_n\bigr)
  = J_n\bigl(\widetilde{\mathbb{P}}^{(n)}_{W_\lambda},
             \widetilde{\mathbb{P}}^{(n)}_{W_0};\mathcal{C}_n\bigr).
\]
Dividing by $n$ and letting $n\to\infty$ yields the stated identities for
$I(\lambda)$ and $J(\lambda)$.
\end{proof}

\subsection*{Proof of Theorem~\ref{thm:graphon-BF}}

\begin{proof}
Let $L_n(G_n)$ denote the log--likelihood ratio
\[
  L_n(G_n)
  := \log \frac{\mathrm{d}\widetilde{\mathbb{P}}^{(n)}_{W_\lambda}}
               {\mathrm{d}\widetilde{\mathbb{P}}^{(n)}_{W_0}}(G_n),
\]
so that
\[
  \mathrm{BF}_n(G_n)
  = \frac{\pi_1}{\pi_0}\,\exp\{L_n(G_n)\},
\]
and $\varphi_n^{\mathrm{BF}}$ is the likelihood ratio (Bayes factor) test
between the two simple graphon hypotheses $H_0$ and $H_1$ with a fixed,
prior--dependent threshold.  In particular, for fixed
$\pi_0,\pi_1\in(0,1)$, changing the priors only shifts the LR threshold
by a constant and does not affect the exponential error rate.

By Lemma~\ref{lem:graphon-information}, for each $n$ the graphon laws
$\widetilde{\mathbb{P}}^{(n)}_{W_0}$ and
$\widetilde{\mathbb{P}}^{(n)}_{W_\lambda}$ induce exactly the same
distributions on graphs $G_n$ as the corresponding labelled SBM laws
$\mathbb{P}^{(n)}_0$ and $\mathbb{P}^{(n)}_\lambda$.  In particular,
\[
  \widetilde{\mathbb{P}}^{(n)}_{W_0}
  = \mathbb{P}^{(n)}_0,
  \qquad
  \widetilde{\mathbb{P}}^{(n)}_{W_\lambda}
  = \mathbb{P}^{(n)}_\lambda,
\]
as probability measures on the common sample space of graphs, and hence
any functional of the pair of distributions (including KL divergences,
Chernoff information, and decision--theoretic robustness indices) is
identical in the graphon and labelled SBM representations.  In
particular, the per--vertex information index $I(\lambda)$ and the
decision--theoretic robustness noise index $J(\lambda)$ of the graphon
experiment coincide with those of the labelled SBM experiment
$\bigl(\mathbb{P}^{(n)}_0,\mathbb{P}^{(n)}_\lambda\bigr)_{n\ge 1}$.

The general decision--theoretic robust testing result for two--point
experiments (under the local
asymptotic normality and quadratic robust--risk conditions stated there)
asserts that, for any radius sequence $\mathcal{C}_n=o(n)$ and any fixed priors in
$(0,1)$, the decision--theoretic robust Bayes risk of the likelihood
ratio test satisfies
\[
  -\lim_{n\to\infty}\frac{1}{n}\,
    \log R_n^{\mathrm{WH}}(\varphi_n^{\mathrm{LR}};\mathcal{C}_n)
  = J(\lambda),
\]
where $\varphi_n^{\mathrm{LR}}$ denotes the LR test and $J(\lambda)$ is a
functional depending only on the limiting per--vertex information and
noise indices of the experiment.

Since the graphon and labelled SBM experiments induce the same laws on
$G_n$ and hence have the same indices $I(\lambda)$ and $J(\lambda)$ by
Lemma~\ref{lem:graphon-information}, the LR/Bayes factor test in the
graphon experiment attains the same decision--theoretic robustness error
exponent $J(\lambda)$ for any $\mathcal{C}_n=o(n)$.  This is exactly the claimed
identity
\[
  -\lim_{n\to\infty}\frac{1}{n}\,
    \log R_n^{\mathrm{WH}}(\varphi_n^{\mathrm{BF}};\mathcal{C}_n)
  = J(\lambda).
\]
\end{proof}

\subsection*{Proof of Theorem~\ref{thm:graphon-minimax}}

\begin{proof}
By assumption, the sequence of graphon experiments
\[
  \bigl\{\widetilde{\mathbb{P}}^{(n)}_W : W\in\mathcal{W}_n\bigr\}
\]
satisfies the same local asymptotic normality and regularity conditions as in
Section~\ref{sec:LAN-sbm}. In particular, for the two--point subexperiment
\[
  \bigl(\widetilde{\mathbb{P}}^{(n)}_{W_0},
        \widetilde{\mathbb{P}}^{(n)}_{W_\lambda}\bigr)_{n\ge 1},
\]
the local log--likelihood ratio admits a quadratic expansion with information
index $I(\lambda)$, and the decision--theoretic robust risk admits the
corresponding quadratic approximation with noise index $J(\lambda)$.

The general decision--theoretic robust minimax lower bound of
Section~\ref{subsec:WH-minimax} therefore applies to this two--point
subexperiment and yields, for any radii $\mathcal{C}_n=o(n)$,
\[
  \liminf_{n\to\infty}
  \frac{1}{n}\log
  \inf_{\varphi_n}
  R_n^{\mathrm{WH}}\bigl(\varphi_n;W_0,W_\lambda,\mathcal{C}_n\bigr)
  \;\ge\; -J(\lambda).
\]
By the definition of $R_{n,\mathcal{W}_n}^{\mathrm{WH}}(\varphi_n;\mathcal{C}_n)$, this is
equivalently
\[
  \liminf_{n\to\infty}
  \frac{1}{n}\log
  \inf_{\varphi_n}
  R_{n,\mathcal{W}_n}^{\mathrm{WH}}(\varphi_n;\mathcal{C}_n)
  \;\ge\; -J(\lambda).
\]

On the other hand, Theorem~\ref{thm:graphon-BF} shows that the Bayes factor
tests $\varphi_n^{\mathrm{BF}}$ achieve the decision--theoretic robust error
exponent $J(\lambda)$ along this two--point subexperiment:
\[
  -\lim_{n\to\infty}\frac{1}{n}\,
    \log R_n^{\mathrm{WH}}(\varphi_n^{\mathrm{BF}};\mathcal{C}_n)
  = J(\lambda).
\]
Since $\{W_0,W_\lambda\}\subset\mathcal{W}_n$ for all $n$, we have
\[
  R_{n,\mathcal{W}_n}^{\mathrm{WH}}
    \bigl(\varphi_n^{\mathrm{BF}};\mathcal{C}_n\bigr)
  =
  R_n^{\mathrm{WH}}(\varphi_n^{\mathrm{BF}};W_0,W_\lambda,\mathcal{C}_n),
\]
and hence
\[
  -\lim_{n\to\infty}\frac{1}{n}\,
    \log R_{n,\mathcal{W}_n}^{\mathrm{WH}}
      \bigl(\varphi_n^{\mathrm{BF}};\mathcal{C}_n\bigr)
  = J(\lambda).
\]
Combining the minimax lower bound with this achievability shows that
$J(\lambda)$ is indeed the nonparametric decision--theoretic robustness minimax
error exponent for testing $W_0$ versus $W_\lambda$ within the graphon classes
$\mathcal{W}_n$.
\end{proof}

\subsection*{Proof of Lemma~\ref{lem:unlab-SBM-info}}

Recall from Section~\ref{subsec:graphon-minimax} that $W_0$ and $W_\lambda$
denote the step--function graphons corresponding to the sparse ER and
two--block SBM models, and that $P_{0,\mathrm{unlab}}^{(n)}$ and
$P_{1,\mathrm{unlab}}^{(n)}$ are the induced graph laws under the graphon
sampling scheme.  In particular,
\[
  P_{0,\mathrm{unlab}}^{(n)} = \widetilde{\mathbb{P}}^{(n)}_{W_0},
  \qquad
  P_{1,\mathrm{unlab}}^{(n)} = \widetilde{\mathbb{P}}^{(n)}_{W_\lambda},
\]
where $\widetilde{\mathbb{P}}^{(n)}_W$ denotes the law of the exchangeable
random graph generated from the graphon $W$.

On the other hand, let
$\bigl(\mathbb{P}^{(n)}_0,\mathbb{P}^{(n)}_\lambda\bigr)_{n\ge 1}$ be the
labelled SBM experiment with latent block labels
$Z_1,\dots,Z_n\overset{\text{i.i.d.}}{\sim}\pi=(1/2,1/2)$ and
edge--probability matrices $P_0,P_\lambda$ as in
Section~\ref{subsec:ER-SBM-labelled}.  That is,
\[
  A_{ij}\mid Z_{1:n} \sim \mathrm{Bernoulli}\bigl(P_m(Z_i,Z_j)\bigr),
  \quad 1\le i<j\le n,\quad m\in\{0,\lambda\},
\]
independently over unordered pairs $(i,j)$, and $\mathbb{P}^{(n)}_m$ is the
marginal law of $G_n$ under model $m$.

For the step--graphons $W_0,W_\lambda$, the graphon sampling scheme is
\[
  U_1,\dots,U_n \overset{\text{i.i.d.}}{\sim}\mathrm{Unif}[0,1],\qquad
  A_{ij}\mid U_{1:n} \sim \mathrm{Bernoulli}\bigl(W_m(U_i,U_j)\bigr),
  \quad 1\le i<j\le n,
\]
again independently over edges.  Partition $[0,1]$ into two subintervals of
lengths $\pi_1=\pi_2=1/2$ and define $Z_i$ to be the block index of $U_i$,
i.e.\ the unique $k\in\{1,2\}$ such that $U_i$ lies in block $k$.  Then
$Z_1,\dots,Z_n\overset{\text{i.i.d.}}{\sim}\pi$ and
\[
  W_\lambda(U_i,U_j) = P_\lambda(Z_i,Z_j),
  \qquad
  W_0(U_i,U_j) = P_0(Z_i,Z_j),
\]
for all $1\le i<j\le n$.  Thus, marginally in $G_n$, the step--graphon model
and the latent--label SBM induce the same law on graphs:
\[
  \widetilde{\mathbb{P}}^{(n)}_{W_\lambda}
  = \mathbb{P}^{(n)}_\lambda,
  \qquad
  \widetilde{\mathbb{P}}^{(n)}_{W_0}
  = \mathbb{P}^{(n)}_0,
\]
which is precisely Lemma~\ref{lem:graphon-information} specialized to
these step--graphons.

Consequently,
\[
  D_n^{\mathrm{unlab}}
  := \KL\bigl(P_{1,\mathrm{unlab}}^{(n)}\Vert
              P_{0,\mathrm{unlab}}^{(n)}\bigr)
  = \KL\bigl(\mathbb{P}^{(n)}_\lambda\Vert\mathbb{P}^{(n)}_0\bigr),
\]
and the Chernoff information between $P_{0,\mathrm{unlab}}^{(n)}$ and
$P_{1,\mathrm{unlab}}^{(n)}$ coincides with that between
$\mathbb{P}^{(n)}_0$ and $\mathbb{P}^{(n)}_\lambda$:
\[
  \mathcal{C}_n^{\mathrm{unlab}}
  :=
  \sup_{0\le t\le 1}
  -\log\sum_A P_{0,\mathrm{unlab}}^{(n)}(A)^{1-t}
                   P_{1,\mathrm{unlab}}^{(n)}(A)^t
  =
  \sup_{0\le t\le 1}
  -\log\sum_A \mathbb{P}^{(n)}_0(A)^{1-t}
                   \mathbb{P}^{(n)}_\lambda(A)^t
  =: \mathcal{C}_n.
\]

Lemmas~\ref{lem:ER-SBM-KL} and~\ref{lem:ER-SBM-Chernoff} give
\[
  \frac{1}{n}\,
  \KL\bigl(\mathbb{P}^{(n)}_\lambda\Vert\mathbb{P}^{(n)}_0\bigr)
  \longrightarrow I(\lambda),
  \qquad
  \frac{\mathcal{C}_n}{n}\longrightarrow J(\lambda),
\]
and hence
\[
  \frac{D_n^{\mathrm{unlab}}}{n}\to I(\lambda),
  \qquad
  \frac{\mathcal{C}_n^{\mathrm{unlab}}}{n}\to J(\lambda),
\]
so that $D_n^{\mathrm{unlab}}=I(\lambda)\,n+o(n)$ and
$\mathcal{C}_n^{\mathrm{unlab}}=J(\lambda)\,n+o(n)$.

\subsection*{Proof of Theorem~\ref{thm:WH-unlab-Bayes}}

We keep the notation of the theorem: $M\in\{0,1\}$ is the model index
with prior $\Pi(M=0)=\Pi(M=1)=1/2$, and
$P_{0,\mathrm{unlab}}^{(n)},P_{1,\mathrm{unlab}}^{(n)}$ are the unlabelled
ER and SBM laws on graphs.

\paragraph{\textit{(i) Chernoff optimality of the Bayes factor.}}
For testing two simple hypotheses $P_{0,\mathrm{unlab}}^{(n)}$ and
$P_{1,\mathrm{unlab}}^{(n)}$ with equal priors and $0$--$1$ loss, the
likelihood ratio (Bayes factor) test is Bayes optimal.  Let
$\mathcal{C}_n^{\mathrm{unlab}}$ denote the Chernoff information between
$P_{0,\mathrm{unlab}}^{(n)}$ and $P_{1,\mathrm{unlab}}^{(n)}$, i.e.
\[
  \mathcal{C}_n^{\mathrm{unlab}}
  :=
  \sup_{0\le t\le 1}
  -\log\sum_A P_{0,\mathrm{unlab}}^{(n)}(A)^{1-t}
                 P_{1,\mathrm{unlab}}^{(n)}(A)^t.
\]
The classical Chernoff theorem for two simple hypotheses (e.g.\ any
standard text on asymptotic hypothesis testing) implies that the optimal
Bayes risk (and, in particular, the misclassification probabilities
$R_{n,m}$) decay exponentially with exponent $\mathcal{C}_n^{\mathrm{unlab}}$ in the
sense that
\[
  -\frac{1}{n}\log R_{n,m}
  \longrightarrow
  \lim_{n\to\infty}\frac{\mathcal{C}_n^{\mathrm{unlab}}}{n},
  \qquad m=0,1,
\]
whenever the limit on the right--hand side exists.

Lemma~\ref{lem:unlab-SBM-info} shows that, in the sparse regime $p_n=c/n$,
\[
  \frac{\mathcal{C}_n^{\mathrm{unlab}}}{n}
  \longrightarrow J(\lambda).
\]
Hence
\[
  -\frac{1}{n}\log R_{n,m}
  \;\longrightarrow\;J(\lambda),
  \qquad m=0,1,
\]
which proves part~(i).

\paragraph{\textit{(ii) Robust Bayes risk.}}
Conditionally on $G_n$, let $P_n(\cdot\mid G_n)$ denote the posterior on
$M\in\{0,1\}$ induced by the prior $1/2$ and the pair
$(P_{0,\mathrm{unlab}}^{(n)},P_{1,\mathrm{unlab}}^{(n)})$, and define the
(non--robust) posterior misclassification probability
\[
  e_n(G_n)
  :=
  \mathbb{E}_{P_n(\cdot\mid G_n)}
  \bigl[\mathbbm{1}\{\delta_n(G_n)\neq M\}\bigr]
  =
  \min\bigl\{\Pi_n(M=0\mid G_n),\Pi_n(M=1\mid G_n)\bigr\}.
\]
Under the true model $m\in\{0,1\}$ the (non--robust) misclassification
probability is
\[
  R_{n,m}
  = \mathbb{E}_{P_{m,\mathrm{unlab}}^{(n)}}[e_n(G_n)].
\]

Fix $G_n$ and let $Q$ be any alternative posterior on $\{0,1\}$ such that
$\KL\bigl(Q\Vert P_n(\cdot\mid G_n)\bigr)\le \mathcal{C}_n$.  Since $M$ takes only
two values, Pinsker's inequality gives
\[
  \bigl\|Q - P_n(\cdot\mid G_n)\bigr\|_{\mathrm{TV}}
  \le \sqrt{\frac{1}{2}\,
            \KL\bigl(Q\Vert P_n(\cdot\mid G_n)\bigr)}
  \le \sqrt{\frac{\mathcal{C}_n}{2}}.
\]
For the indicator loss
$f(M)=\mathbbm{1}\{\delta_n(G_n)\neq M\}$ we have, for any two
probability measures $Q$ and $P$ on $\{0,1\}$,
\[
  \bigl|
    \mathbb{E}_Q[f(M)] - \mathbb{E}_{P}[f(M)]
  \bigr|
  \le
  \bigl\|Q - P\bigr\|_{\mathrm{TV}}.
\]
Applying this with $P=P_n(\cdot\mid G_n)$ yields
\[
  \mathbb{E}_Q\bigl[
    \mathbbm{1}\{\delta_n(G_n)\neq M\}
  \bigr]
  \le e_n(G_n) + \sqrt{\frac{\mathcal{C}_n}{2}}.
\]

Taking the supremum over $Q$ with $\KL(Q\Vert P_n)\le \mathcal{C}_n$ and then the
expectation under $P_{m,\mathrm{unlab}}^{(n)}$ yields
\begin{align*}
  \mathcal{R}_n^{\mathrm{WH}}(m;\mathcal{C}_n)
  &:= \mathbb{E}_{P_{m,\mathrm{unlab}}^{(n)}}\Bigl[
         \sup_{Q:\,\KL(Q\Vert P_n(\cdot\mid G_n))\le \mathcal{C}_n}
         \mathbb{E}_Q\bigl[
           \mathbbm{1}\{\delta_n(G_n)\neq M\}
         \bigr]
       \Bigr]\\
  &\le \mathbb{E}_{P_{m,\mathrm{unlab}}^{(n)}}[e_n(G_n)]
       + \sqrt{\frac{\mathcal{C}_n}{2}}
   \;=\; R_{n,m} + \sqrt{\frac{\mathcal{C}_n}{2}}.
\end{align*}
On the other hand, taking $Q=P_n(\cdot\mid G_n)$ inside the supremum
shows
\[
  \mathcal{R}_n^{\mathrm{WH}}(m;\mathcal{C}_n)
  \ge \mathbb{E}_{P_{m,\mathrm{unlab}}^{(n)}}[e_n(G_n)]
  = R_{n,m}.
\]
Thus
\[
  R_{n,m}
  \;\le\; \mathcal{R}_n^{\mathrm{WH}}(m;\mathcal{C}_n)
  \;\le\; R_{n,m} + \sqrt{\frac{\mathcal{C}_n}{2}}.
\]

Now assume $\mathcal{C}_n\downarrow 0$ and
\[
  \mathcal{C}_n = o\bigl(R_{n,m}^2\bigr),
  \qquad\text{equivalently}\qquad
  \sqrt{\mathcal{C}_n} = o(R_{n,m}),
\]
for the given $m\in\{0,1\}$.  Then $\sqrt{\mathcal{C}_n}/R_{n,m}\to 0$ and the
previous display implies
\[
  \mathcal{R}_n^{\mathrm{WH}}(m;\mathcal{C}_n)
  = R_{n,m}\,\bigl(1+o(1)\bigr),
  \qquad m=0,1.
\]
In particular,
\[
  -\frac{1}{n}\log\mathcal{R}_n^{\mathrm{WH}}(m;\mathcal{C}_n)
  = -\frac{1}{n}\log R_{n,m} + o(1)
  \longrightarrow J(\lambda),
\]
by part~(i).  This proves~(ii).

\subsection*{Proof of Theorem~\ref{thm:nonparam-WH-ER-SBM}}

By definition,
\begin{align*}
  \mathfrak{R}_n^\star(\mathcal{C}_n)
  &=
  \inf_{\delta_n,\Pi_n}
  \sup_{P\in\mathcal{P}_n}
  \mathbb{E}_P\bigl[e_n^{\mathrm{rob}}(\mathcal{C}_n;G_n)\bigr]\\
  &\ge
  \inf_{\delta_n,\Pi_n}
  \max\left\{
    \mathbb{E}_{P_{0,\mathrm{unlab}}^{(n)}}\bigl[
      e_n^{\mathrm{rob}}(\mathcal{C}_n;G_n)
    \bigr],
    \mathbb{E}_{P_{1,\mathrm{unlab}}^{(n)}}\bigl[
      e_n^{\mathrm{rob}}(\mathcal{C}_n;G_n)
    \bigr]
  \right\},
\end{align*}
since $P_{0,\mathrm{unlab}}^{(n)},P_{1,\mathrm{unlab}}^{(n)}\in\mathcal{P}_n$
for all large $n$.

Fix a selector $\delta_n$ and, for each realisation $G_n$, let
$\Pi_n(\cdot\mid G_n)$ denote the posterior on $M\in\{0,1\}$ induced by
the prior $\Pi(M=0)=\Pi(M=1)=1/2$ and the simple pair
$\bigl(P_{0,\mathrm{unlab}}^{(n)},P_{1,\mathrm{unlab}}^{(n)}\bigr)$.  Define
the (non--robust) posterior misclassification probability
\[
  e_n(G_n)
  :=
  \mathbb{E}_{\Pi_n(\cdot\mid G_n)}
  \bigl[\mathbbm{1}\{\delta_n(G_n)\neq M\}\bigr].
\]
For $m\in\{0,1\}$, let
\[
  R_{n,m}(\delta_n)
  :=
  \mathbb{E}_{P_{m,\mathrm{unlab}}^{(n)}}[e_n(G_n)].
\]

For each $G_n$, the robustified error functional satisfies
\[
  e_n^{\mathrm{rob}}(\mathcal{C}_n;G_n)
  =
  \sup_{Q:\,\mathrm{KL}(Q\Vert\Pi_n(\cdot\mid G_n))\le \mathcal{C}_n}
    \mathbb{E}_Q\bigl[
      \mathbbm{1}\{\delta_n(G_n)\neq M\}
    \bigr]
  \ge
  \mathbb{E}_{\Pi_n(\cdot\mid G_n)}
  \bigl[\mathbbm{1}\{\delta_n(G_n)\neq M\}\bigr]
  = e_n(G_n),
\]
since we may take $Q=\Pi_n(\cdot\mid G_n)$ in the supremum.  Consequently,
for each $m\in\{0,1\}$,
\[
  \mathbb{E}_{P_{m,\mathrm{unlab}}^{(n)}}\bigl[
    e_n^{\mathrm{rob}}(\mathcal{C}_n;G_n)
  \bigr]
  \ge
  \mathbb{E}_{P_{m,\mathrm{unlab}}^{(n)}}[e_n(G_n)]
  = R_{n,m}(\delta_n).
\]
Taking the maximum over $m$ and then the infimum over selectors $\delta_n$
(with $\Pi_n$ the corresponding posterior) yields
\[
  \mathfrak{R}_n^\star(\mathcal{C}_n)
  \;\ge\;
  \inf_{\delta_n}
  \max_{m\in\{0,1\}} R_{n,m}(\delta_n).
\]

To identify the exponential rate of the right--hand side, consider the
Bayesian two--point experiment in which $M\in\{0,1\}$ is drawn with prior
$\Pi(M=0)=\Pi(M=1)=1/2$, and then $G_n$ is drawn from
$P_{M,\mathrm{unlab}}^{(n)}$.  For any selector $\delta_n$ we can write the
Bayes (mixture) misclassification risk as
\[
  r_n(\delta_n)
  :=
  \mathbb{P}\bigl(\delta_n(G_n)\neq M\bigr)
  = \frac{1}{2}\,\mathbb{P}_{P_{0,\mathrm{unlab}}^{(n)}}\bigl(\delta_n(G_n)\neq 0\bigr)
    + \frac{1}{2}\,\mathbb{P}_{P_{1,\mathrm{unlab}}^{(n)}}\bigl(\delta_n(G_n)\neq 1\bigr).
\]
On the other hand, by definition of $e_n(G_n)$ and the law of total
expectation under the joint prior--likelihood model,
\[
  r_n(\delta_n)
  = \mathbb{E}_{G_n}\bigl[e_n(G_n)\bigr]
  = \frac{1}{2}\,R_{n,0}(\delta_n) + \frac{1}{2}\,R_{n,1}(\delta_n).
\]
Hence, for every $\delta_n$,
\[
  \max_{m\in\{0,1\}} R_{n,m}(\delta_n)
  \;\ge\;
  r_n(\delta_n),
\]
and therefore
\[
  \inf_{\delta_n}\max_{m} R_{n,m}(\delta_n)
  \;\ge\;
  \inf_{\delta_n} r_n(\delta_n),
\]
where the right--hand side is the classical minimal Bayes risk for testing
the two simple hypotheses $P_{0,\mathrm{unlab}}^{(n)}$ and
$P_{1,\mathrm{unlab}}^{(n)}$ with equal priors.

It is well known (Chernoff theory for simple hypothesis testing) that this
minimal Bayes risk is achieved, up to subexponential factors, by the
likelihood ratio/Bayes factor test $\delta_n^{\mathrm{LR}}$, and that its
error probability satisfies
\[
  -\log r_n(\delta_n^{\mathrm{LR}})
  = \mathcal{C}_n^{\mathrm{unlab}} + O(1),
\]
where $\mathcal{C}_n^{\mathrm{unlab}}$ is the Chernoff information between
$P_{0,\mathrm{unlab}}^{(n)}$ and $P_{1,\mathrm{unlab}}^{(n)}$.  By
Lemma~\ref{lem:unlab-SBM-info},
\[
  \mathcal{C}_n^{\mathrm{unlab}}
  = J(\lambda)\,n + o(n),
\]
so
\[
  -\frac{1}{n}\log\bigl(\inf_{\delta_n} r_n(\delta_n)\bigr)
  \longrightarrow J(\lambda),
  \qquad n\to\infty.
\]
In particular, $\inf_{\delta_n} r_n(\delta_n)$ decays at rate
$\exp\{-J(\lambda)n+o(n)\}$, and by the inequality
$\inf_{\delta_n}\max_m R_{n,m}(\delta_n)\ge\inf_{\delta_n} r_n(\delta_n)$,
the same exponential lower bound holds for $\inf_{\delta_n}\max_m
R_{n,m}(\delta_n)$.

Combining this with the earlier bound
$\mathfrak{R}_n^\star(\mathcal{C}_n)\ge\inf_{\delta_n}\max_m R_{n,m}(\delta_n)$, we
obtain
\[
  \limsup_{n\to\infty}
  \frac{1}{n}\log\frac{1}{\mathfrak{R}_n^\star(\mathcal{C}_n)}
  \;\le\;
  \limsup_{n\to\infty}
  \frac{1}{n}\log
  \frac{1}{
    \inf_{\delta_n}
    \max_{m} R_{n,m}(\delta_n)
  }
  = J(\lambda),
\]
which is~\eqref{eq:WH-nonparam-minimax-bound}.  In particular, no
procedure (choice of estimator, posterior, and radii $(\mathcal{C}_n)$) can achieve a
strictly larger exponential error rate than $J(\lambda)$ uniformly over
$\mathcal{P}_n$.

\subsection*{Proof of Theorem~\ref{thm:WH-graphon-minimax}}

For the lower bound, apply
Theorem~\ref{thm:nonparam-WH-ER-SBM} with
$\mathcal{P}_n=\{P_W^{(n)}:W\in\mathcal{W}_n\}$, where $P_W^{(n)}$ denotes
the graph law induced by $W$.  By assumption, $W_0,W_\lambda\in\mathcal{W}_n$
for all large $n$, so $P_{W_0}^{(n)}$ and $P_{W_\lambda}^{(n)}$ both belong
to $\mathcal{P}_n$, and the nonparametric minimax robust risk over
$\mathcal{P}_n$ cannot have an exponential rate better than $J(\lambda)$.

For the upper bound, take $\delta_n$ to be the Bayes factor (likelihood
ratio) test between $W_0$ and $W_\lambda$ with equal prior probabilities
on $M\in\{0,1\}$, and let $\Pi_n(\cdot\mid G_n)$ be the corresponding
posterior on $M$.  Denote the resulting (non--robust) Bayes misclassification
probability by
\[
  R_n
  :=
  \max_{m\in\{0,1\}}
  \mathbb{P}_{P_{W_m}^{(n)}}\bigl(\delta_n(G_n)\neq m\bigr).
\]
By Theorem~\ref{thm:WH-unlab-Bayes}(i),
\[
  -\frac{1}{n}\log R_n \;\longrightarrow\; J(\lambda).
\]

Now let $(\mathcal{C}_n)$ be any sequence with $\mathcal{C}_n\downarrow 0$ such that
\[
  \mathcal{C}_n = o(R_n^2),
  \qquad\text{i.e.}\qquad
  \sqrt{\mathcal{C}_n} = o(R_n).
\]
In particular, since $R_n=\exp\{-J(\lambda)n+o(n)\}$, it is sufficient (and
convenient) to require
\[
  \mathcal{C}_n = o\bigl(\exp\{-2J(\lambda)n\}\bigr),
  \qquad\text{equivalently}\qquad
  \sqrt{\mathcal{C}_n} = o\bigl(\exp\{-J(\lambda)n\}\bigr).
\]
By Theorem~\ref{thm:WH-unlab-Bayes}(ii) we then have
\[
  \mathcal{R}_n^{\mathrm{WH}}(m;\mathcal{C}_n)
  = R_n(1+o(1)),
  \qquad m=0,1.
\]

For any other $P\in\mathcal{P}_n$ the misclassification probability of
$\delta_n$ is at most $1$, and the robustification cannot increase it
beyond $1$.  Thus
\begin{align*}
  \mathfrak{R}_n^{\mathrm{WH}}(\mathcal{C}_n)
  &:=
  \inf_{\tilde\delta_n,\tilde\Pi_n}
  \sup_{P\in\mathcal{P}_n}
  \mathbb{E}_P\bigl[e_n^{\mathrm{rob}}(\mathcal{C}_n;G_n)\bigr]\\
  &\le
  \sup_{P\in\mathcal{P}_n}
  \mathbb{E}_P\bigl[e_n^{\mathrm{rob}}(\mathcal{C}_n;G_n)\bigr]
  \le
  \max_{m\in\{0,1\}} \mathcal{R}_n^{\mathrm{WH}}(m;\mathcal{C}_n)
  = R_n(1+o(1)).
\end{align*}
Therefore
\[
  \liminf_{n\to\infty}
  \frac{1}{n}\log\frac{1}{\mathfrak{R}_n^{\mathrm{WH}}(\mathcal{C}_n)}
  \;\ge\;
  \liminf_{n\to\infty}
  \frac{1}{n}\log\frac{1}{R_n}
  = J(\lambda).
\]

Combining this with the lower bound gives
\[
  \lim_{n\to\infty}
  \frac{1}{n}\log\frac{1}{\mathfrak{R}_n^{\mathrm{WH}}(\mathcal{C}_n)}
  = J(\lambda),
\]
and shows that the Bayes factor test between $W_0$ and $W_\lambda$ is
decision--theoretic robustness minimax optimal at exponent $J(\lambda)$
over the graphon class $\mathcal{W}_n$.

\subsection*{Proof of Proposition~\ref{prop:ER-critical-R}}

By definition,
\[
  \theta(\mu)
  :=
  \frac{\mathbb{E}_\mu[D(D-1)]}{\mathbb{E}_\mu[D]},
  \qquad
  \Delta(\mu) := 1-\theta(\mu),
\]
so, in the subcritical regime $\theta(\mu)<1$,
\[
  R(\mu)
  := \frac{1}{1-\theta(\mu)}
  = \frac{1}{\Delta(\mu)}.
\]
Thus
\[
  R(\mu)
  = \frac{c_0}{\Delta(\mu)} + H(\mu)
\]
holds with $c_0=1$ and $H(\mu)\equiv 0$ for all $\mu$ such that
$\theta(\mu)<1$.

We now verify the smoothness and gradient behavior near the critical
surface.  Fix a finite--dimensional parametrization of the family of degree
distributions, for instance a smooth map
$\vartheta\mapsto\mu_\vartheta$ into the interior of the simplex of
truncated degree distributions.  For such parametrizations the map
$\vartheta\mapsto\theta(\mu_\vartheta)$ is $C^1$ on
$\{\vartheta:\theta(\mu_\vartheta)<1\}$, and hence so is
\[
  \vartheta\mapsto R(\mu_\vartheta)
  = \bigl[1-\theta(\mu_\vartheta)\bigr]^{-1}
\]
on the same set.  In particular, $R$ is smooth on any compact subset of
$\{\mu:\theta(\mu)<1\}$.

Writing derivatives with respect to the parameter $\vartheta$, we have
by the chain rule
\[
  \nabla_\vartheta R(\mu_\vartheta)
  = \frac{\nabla_\vartheta\theta(\mu_\vartheta)}{
          \bigl(1-\theta(\mu_\vartheta)\bigr)^2}
  = \Delta(\mu_\vartheta)^{-2}\,\nabla_\vartheta\theta(\mu_\vartheta).
\]

Let $\vartheta_\star$ be any point such that
$\theta(\mu_{\vartheta_\star})=1$ and
$\nabla_\vartheta\theta(\mu_{\vartheta_\star})\neq 0$
(non--degenerate approach to criticality).  By continuity of
$\nabla_\vartheta\theta$, there exists a neighborhood $N$ of
$\vartheta_\star$ and constants $0<c_1\le c_2<\infty$ such that
\[
  c_1
  \;\le\;
  \bigl\|\nabla_\vartheta\theta(\mu_\vartheta)\bigr\|
  \;\le\;
  c_2
  \qquad
  \text{for all }\vartheta\in N\cap\{\theta(\mu_\vartheta)<1\}.
\]
On this neighborhood we therefore have, for all such $\vartheta$,
\[
  c_1\,\Delta(\mu_\vartheta)^{-2}
  \;\le\;
  \bigl\|\nabla_\vartheta R(\mu_\vartheta)\bigr\|
  = \Delta(\mu_\vartheta)^{-2}
    \bigl\|\nabla_\vartheta\theta(\mu_\vartheta)\bigr\|
  \;\le\;
  c_2\,\Delta(\mu_\vartheta)^{-2}.
\]
Equivalently,
\[
  \bigl\|\nabla_\vartheta R(\mu_\vartheta)\bigr\|
  \asymp \Delta(\mu_\vartheta)^{-2}
  \qquad\text{as }\Delta(\mu_\vartheta)\downarrow 0,\ \vartheta\to\vartheta_\star.
\]

Thus $R(\mu)=1/\Delta(\mu)$ with $\Delta(\mu)=1-\theta(\mu)$, and
$\|\nabla_\vartheta R(\mu_\vartheta)\|\asymp \Delta(\mu_\vartheta)^{-2}$
near the fragmentation threshold along any non--degenerate approach.
This is exactly Assumption~\ref{ass:critical-R} with
$\rho(\mu)=\theta(\mu)$ and $\Delta(\mu)=1-\theta(\mu)$ in the chosen
finite--dimensional parametrization. \qed

\subsection*{Proof of Theorem~\ref{thm:WH-nonparam-minimax-critical}}

We use a standard two--point argument within the configuration--model
subfamily.

\paragraph{Step 1: choice of a near--critical base point.}
Fix a sequence $\Delta_n\downarrow 0$ with $\Delta_n\gg n^{-1/2}$.  For
each $n$, choose a degree distribution $\mu_{0,n}$ such that
\[
  \Delta(\mu_{0,n}) = 1-\theta(\mu_{0,n}) \in [\Delta_n,2\Delta_n],
\]
which is possible by continuity of $\Delta(\mu)$ and the assumption that
the slice $\{\mu:\Delta(\mu)\in[\Delta_n,2\Delta_n]\}$ is non--empty for
all large $n$.

By Proposition~\ref{prop:ER-critical-R} and the non--degeneracy condition
$\nabla_\mu\theta(\mu_\star)\neq 0$ at the critical surface, the gradient
$\nabla_\mu\Delta(\mu) = -\nabla_\mu\theta(\mu)$ is continuous and stays
bounded and bounded away from $0$ in a small neighborhood of that
surface.  For all large $n$ we may therefore choose a unit direction
vector $v_n$ and a constant $c_\Delta>0$, independent of $n$, such that
\[
  \bigl|\nabla_\mu\Delta(\mu_{0,n})\cdot v_n\bigr|
  \;\ge\; c_\Delta > 0.
\]

Let
\[
  h_n := \kappa n^{-1/2} v_n,
\]
for some small constant $\kappa>0$ to be chosen later, and define the
perturbed parameter
\[
  \mu_{1,n} := \mu_{0,n} + h_n.
\]
Taking $\kappa>0$ sufficiently small we can ensure that
$\mu_{0,n},\mu_{1,n}\in(\underline{\mu},\overline{\mu})$ for all large
$n$.

\paragraph{Step 2: KL control for the two configurations.}
Consider the experiment of observing the configuration model
$G_n\sim\mathrm{CM}_n(\mu)$.  We can realise this as follows: first draw
i.i.d.\ degrees $D_1,\dots,D_n\sim\mu$ (with a negligible conditioning on
$\sum_i D_i$ being even), and then form a uniform random pairing of the
stubs.  The second step is a Markov kernel that does not depend on $\mu$,
so by the data--processing inequality,
\[
  \KL\bigl(P_{\mu_{1,n}}^{(n)}\Vert P_{\mu_{0,n}}^{(n)}\bigr)
  \;\le\;
  \KL\bigl(\mu_{1,n}^{\otimes n}\Vert \mu_{0,n}^{\otimes n}\bigr)
  = n\,\KL(\mu_{1,n}\Vert\mu_{0,n}).
\]

We work with a fixed finite--dimensional parametrization
$\vartheta\mapsto\mu_\vartheta$ of the degree distributions taking values
in $(\underline{\mu},\overline{\mu})$, smooth in $\vartheta$, and assume
the degrees have uniformly bounded third moments.  In this setting the
single--observation KL divergence admits a local quadratic expansion:
\[
  \KL(\mu_{1,n}\Vert\mu_{0,n})
  = \frac{1}{2}\,h_n^\top I(\mu_{0,n})h_n + O(\|h_n\|^3),
\]
where $I(\mu_{0,n})$ is the Fisher information matrix of the degree
distribution at $\mu_{0,n}$.  Since the parameter set
$(\underline{\mu},\overline{\mu})$ is compact and $I(\mu)$ is continuous
in $\mu$, we have $\sup_\mu\|I(\mu)\|<\infty$, and hence
\[
  n\,\KL(\mu_{1,n}\Vert\mu_{0,n})
  = \frac{1}{2}\,\kappa^2\,v_n^\top I(\mu_{0,n})v_n + O(n^{-1/2})
  \longrightarrow K_0 \in [0,\infty),
\]
for some finite constant $K_0$ proportional to $\kappa^2$.

By choosing $\kappa>0$ sufficiently small we may assume that $K_0<1$, and
for all large $n$,
\[
  \KL\bigl(P_{\mu_{1,n}}^{(n)}\Vert P_{\mu_{0,n}}^{(n)}\bigr)
  \le K_0 + 1 < 2.
\]
By Pinsker's inequality,
\[
  \bigl\|P_{\mu_{1,n}}^{(n)} - P_{\mu_{0,n}}^{(n)}\bigr\|_{\mathrm{TV}}
  \le \sqrt{\frac{1}{2}\,
            \KL\bigl(P_{\mu_{1,n}}^{(n)}\Vert P_{\mu_{0,n}}^{(n)}\bigr)}
  \le \sqrt{\frac{K_0+1}{2}}
  =: 1-\eta,
\]
for some $\eta\in(0,1)$ independent of $n$.  In particular, the total
variation distance between $P_{\mu_{0,n}}^{(n)}$ and
$P_{\mu_{1,n}}^{(n)}$ is uniformly bounded away from $1$.

\paragraph{Step 3: separation in $R(\mu)$ on the near--critical slice.}
By Proposition~\ref{prop:ER-critical-R},
\[
  R(\mu) = \frac{1}{\Delta(\mu)},
  \qquad \Delta(\mu) = 1-\theta(\mu),
\]
and $\Delta(\mu)$ is $C^1$ with $\|\nabla_\mu\Delta(\mu)\|$ bounded and
bounded away from $0$ in a neighborhood of the critical surface
$\{\mu:\Delta(\mu)=0\}$.

By the mean value theorem, for each $n$ there exists $\tilde\mu_n$ on the
line segment between $\mu_{0,n}$ and $\mu_{1,n}$ such that
\[
  \Delta(\mu_{1,n}) - \Delta(\mu_{0,n})
  = \nabla_\mu\Delta(\tilde\mu_n)\cdot h_n.
\]
As $n\to\infty$ we have $\Delta(\mu_{0,n})\to 0$, so $\mu_{0,n}$ (and
hence $\tilde\mu_n$) approach the critical surface.  By continuity of
$\nabla_\mu\Delta$ and non--degeneracy at $\mu_\star$, there exist
constants $0<c'_1\le c'_2<\infty$ and $N$ such that, for all $n\ge N$,
\[
  c'_1\,\|h_n\|
  \;\le\;
  \bigl|\Delta(\mu_{1,n})-\Delta(\mu_{0,n})\bigr|
  \;\le\;
  c'_2\,\|h_n\|
  = O\!\bigl(n^{-1/2}\bigr).
\]

Moreover, $\|h_n\|=\kappa n^{-1/2}$ and $\Delta_n\gg n^{-1/2}$, so for
all large $n$,
\[
  \bigl|\Delta(\mu_{1,n}) - \Delta(\mu_{0,n})\bigr|
  \le \tfrac{1}{4}\Delta_n.
\]
Since $\Delta(\mu_{0,n})\in[\Delta_n,2\Delta_n]$, this implies that, for
all large $n$,
\[
  \Delta(\mu_{m,n}) \in [\Delta_n,2\Delta_n],
  \qquad m=0,1,
\]
so both $\mu_{0,n}$ and $\mu_{1,n}$ lie in the near--critical slice
$\{\mu:\Delta(\mu)\in[\Delta_n,2\Delta_n]\}$.

Using $R(\mu)=1/\Delta(\mu)$,
\[
  R(\mu_{1,n}) - R(\mu_{0,n})
  = \frac{1}{\Delta(\mu_{1,n})} - \frac{1}{\Delta(\mu_{0,n})}
  = \frac{\Delta(\mu_{0,n}) - \Delta(\mu_{1,n})}{
          \Delta(\mu_{0,n})\,\Delta(\mu_{1,n})}.
\]
On the slice we have $\Delta(\mu_{m,n})\asymp\Delta_n$ for $m=0,1$, and
$|\Delta(\mu_{1,n})-\Delta(\mu_{0,n})|\asymp\|h_n\|=\kappa n^{-1/2}$.
Thus
\[
  \bigl|R(\mu_{1,n}) - R(\mu_{0,n})\bigr|
  \asymp \frac{\|h_n\|}{\Delta_n^2}
  = \frac{\kappa}{\sqrt{n}\,\Delta_n^2},
\]
and hence
\begin{equation}
  \bigl(R(\mu_{1,n}) - R(\mu_{0,n})\bigr)^2
  \asymp \frac{1}{n\,\Delta_n^4},
  \label{eq:critical-R-separation-SI}
\end{equation}
with constants independent of $n$.

\paragraph{Step 4: classical two--point minimax lower bound.}
Let $a_n(G_n)$ be any estimator of $R(\mu)$.  Le Cam's two--point method
for squared loss
implies that whenever the total variation distance between
$P_{\mu_{0,n}}^{(n)}$ and $P_{\mu_{1,n}}^{(n)}$ is bounded above by
$1-\eta$ for some $\eta>0$, we have
\[
  \max_{m\in\{0,1\}}
  \mathbb{E}_{P_{\mu_{m,n}}^{(n)}}\bigl[
    \bigl(a_n(G_n)-R(\mu_{m,n})\bigr)^2
  \bigr]
  \;\ge\;
  c_\eta\,\bigl(R(\mu_{1,n}) - R(\mu_{0,n})\bigr)^2,
\]
where $c_\eta>0$ depends only on $\eta$.  By Step~2 we can take
$\eta>0$ independent of $n$, so $c_\eta$ is a fixed positive constant.
Combining this with \eqref{eq:critical-R-separation-SI} yields
\[
  \max_{m\in\{0,1\}}
  \mathbb{E}_{P_{\mu_{m,n}}^{(n)}}\bigl[
    \bigl(a_n(G_n)-R(\mu_{m,n})\bigr)^2
  \bigr]
  \;\gtrsim\;
  \frac{1}{n\,\Delta_n^4},
\]
with constants independent of $n$.

Taking the infimum over all estimators $a_n$ shows that the classical
minimax risk over the near--critical slice satisfies
\[
  \mathfrak{R}_n^{\mathrm{class}}(\Delta_n)
  \;\gtrsim\; \frac{1}{n\,\Delta_n^4},
\]
which is exactly \eqref{eq:WH-minimax-critical-lb-main} for some
$c>0$.  The equivalent $\liminf$ formulation follows immediately.  As
noted in the theorem statement, any robustified minimax risk that
pointwise dominates the classical squared--error risk inherits the same
lower bound.
\qed

\begin{supptheorem}[Complexity of mirror--descent adversary]
\label{thm:complex}
Let $\Pi$ be a reference posterior on a parameter space
$\Theta\subset\mathbb{R}^d$ and fix a radius $C>0$.  For a fixed action
$a$ and loss $\ell(\theta):=\ell(a,\theta)$, define the robust risk
\[
  R^{\mathrm{WH}}(\Pi,C)
  :=
  \sup_{Q\ll\Pi:\,D_\phi(Q\Vert\Pi)\le C}
  \int_\Theta \ell(\theta)\,Q(\mathrm d\theta),
\]
where $D_\phi$ is a $\phi$--divergence.

Assume:
\begin{enumerate}
  \item The loss $\ell\colon\Theta\to\mathbb{R}$ is $L$--Lipschitz
        (with respect to the Euclidean metric on $\Theta$) and bounded:
        $|\ell(\theta)|\le M$ for all $\theta\in\Theta$.

  \item Let $\mathcal{Q}_C := \{Q\ll\Pi : D_\phi(Q\Vert\Pi)\le C\}$ be the
        divergence ball.  The functional
        $h(Q):=D_\phi(Q\Vert\Pi)$ is Fr\'echet differentiable and
        $1$--strongly convex with respect to the total variation norm
        $\|\cdot\|_{\mathrm{TV}}$ on a neighborhood of $\mathcal{Q}_C$.
        We use the corresponding Bregman divergence
        \[
          D_h(Q\Vert Q')
          := h(Q) - h(Q') - \bigl\langle \nabla h(Q'), Q - Q'\bigr\rangle,
        \]
        and write $D_h(Q\Vert Q')$ and $D_\phi(Q\Vert Q')$ interchangeably.

  \item For every $Q\in\mathcal{Q}_C$, the constrained HMC kernel
        targeting $Q$ satisfies the following mixing bound in $1$--Wasserstein
        distance $W_1$: there exist constants $A,B>0$ (independent of
        $Q,d,\delta$) such that for every $0<\delta<1/e$ there is an
        integer $K(Q,\delta)\le A\,d^{1/4}\log(B/\delta)$ with
        \[
          \mathbb{E}\bigl[W_1(\widetilde Q,Q)\bigr]\le\delta,
        \]
        where $\widetilde Q$ is the law of the HMC output after
        $K(Q,\delta)$ steps.
\end{enumerate}

Consider the convex optimization problem
\[
  \sup_{Q\in\mathcal{Q}_C} F(Q),
  \qquad
  F(Q):=\int_\Theta \ell(\theta)\,Q(\mathrm d\theta),
\]
and let $Q^\star\in\mathcal{Q}_C$ be any maximizer, so that
$R^{\mathrm{WH}}(\Pi,C)=F(Q^\star)$.

Let $(Q_t)_{t\ge 1}$ be the (conceptual) mirror--descent iterates with
mirror map $h(Q)=D_\phi(Q\Vert\Pi)$ and constant step size $\eta>0$,
initialized at $Q_1=\Pi$, given by
\begin{equation}
\label{eq:md-update}
  Q_{t+1}
  := \arg\min_{Q\in\mathcal{Q}_C}
       \Bigl\{
         \eta\,\langle g, Q\rangle
         + D_\phi(Q\Vert Q_t)
       \Bigr\},
  \qquad g(\theta):=-\ell(\theta),
\end{equation}
and define the averaged iterate
\[
  \widehat Q_T := \frac{1}{T}\sum_{t=1}^T Q_t.
\]
At each iteration $t$, we approximately sample from $Q_t$ using
constrained HMC with accuracy parameter $\delta$, producing a random
sample $\theta_t\sim\widetilde Q_t$, where
$\widetilde Q_t$ is the law of the HMC output.  Define the Monte Carlo
estimate of the robust risk
\[
  \widehat R_T
  := \frac{1}{T}\sum_{t=1}^T \ell(\theta_t).
\]

Then for any target accuracy $\varepsilon\in(0,1)$, if we choose
\[
  T
  \;\ge\;
  \frac{8M^2 C}{\varepsilon^2},
  \qquad
  \eta
  \;=\;
  \sqrt{\frac{2C}{M^2 T}},
  \qquad
  \delta
  \;=\;
  \frac{\varepsilon}{2L},
\]
we have
\[
  \bigl|
    R^{\mathrm{WH}}(\Pi,C) - \mathbb{E}[\widehat R_T]
  \bigr|
  \;\le\; \varepsilon.
\]

Moreover, with these choices, Algorithm~\ref{alg:mirror-adversary}
uses
\[
  T
  \;=\;
  O\!\left(\frac{M^2 C}{\varepsilon^2}\right)
\]
mirror--descent iterations (gradient evaluations) and a total of
\[
  O\!\left(
    d^{1/4}\,
    \frac{M^2 C}{\varepsilon^2}\,
    \log\frac{L}{\varepsilon}
  \right)
\]
constrained HMC steps.  In particular, the outer adversarial optimization
has polynomial $1/\varepsilon^2$ dependence on the target accuracy, while
the inner sampling complexity scales like $d^{1/4}$ in the dimension,
up to logarithmic factors.
\end{supptheorem}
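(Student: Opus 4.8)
The plan is to decompose the total error $\bigl|R^{\mathrm{WH}}(\Pi,C)-\mathbb{E}[\widehat R_T]\bigr|$ into an \emph{optimization} term, controlled by a mirror--descent regret bound on the conceptual iterates $(Q_t)$, and a \emph{sampling} term, controlled by the constrained-HMC Wasserstein mixing bound together with the Lipschitz property of $\ell$. Writing $Q^\star$ for a maximizer of the linear functional $F(Q)=\int_\Theta\ell\,\mathrm{d}Q$ over $\mathcal Q_C$, so that $R^{\mathrm{WH}}(\Pi,C)=F(Q^\star)$, the triangle inequality gives
\[
\bigl|F(Q^\star)-\mathbb{E}[\widehat R_T]\bigr|
\;\le\;
\Bigl|F(Q^\star)-\tfrac1T\sum_{t=1}^{T}F(Q_t)\Bigr|
+\Bigl|\tfrac1T\sum_{t=1}^{T}F(Q_t)-\mathbb{E}[\widehat R_T]\Bigr|,
\]
and I would show each term is at most $\varepsilon/2$ under the stated choices of $T,\eta,\delta$.

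First I would establish the mirror--descent regret bound. Since $F$ is linear in $Q$, minimizing $-F$ over the convex set $\mathcal Q_C$ with mirror map $h=D_\phi(\cdot\Vert\Pi)$ is precisely the online-linear-optimization setup, with constant (sub)gradient $g=-\ell$. Using the $1$-strong convexity of $h$ with respect to $\|\cdot\|_{\mathrm{TV}}$ (Assumption~2) and the dual-norm bound $\|g\|_\infty=\|\ell\|_\infty\le M$ (Assumption~1), the standard mirror--descent analysis yields, for the averaged iterate $\widehat Q_T=\tfrac1T\sum_tQ_t$,
\[
F(Q^\star)-F(\widehat Q_T)
\;\le\;
\frac{D_h(Q^\star\Vert Q_1)}{\eta T}+\frac{\eta M^2}{2}.
\]
Because $Q_1=\Pi$ and $\phi(1)=\phi'(1)=0$, we have $h(\Pi)=0$ and $\nabla h(\Pi)=0$, so $D_h(Q^\star\Vert Q_1)=D_\phi(Q^\star\Vert\Pi)\le C$; substituting $\eta=\sqrt{2C/(M^2T)}$ gives $F(Q^\star)-F(\widehat Q_T)\le M\sqrt{2C/T}\le\varepsilon/2$ once $T\ge 8M^2C/\varepsilon^2$. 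Linearity of $F$ gives $F(\widehat Q_T)=\tfrac1T\sum_tF(Q_t)$, and since $F(Q_t)\le F(Q^\star)$ for each $t$, this controls the first term.

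Next I would handle the sampling term. Each HMC output has law $\widetilde Q_t$ with $\mathbb{E}[W_1(\widetilde Q_t,Q_t)]\le\delta$ by Assumption~3, which applies uniformly over $t$ since every iterate $Q_t\in\mathcal Q_C$ and the constants $A,B$ are independent of the target measure. By Kantorovich--Rubinstein duality and $L$-Lipschitzness of $\ell$,
\[
\bigl|\mathbb{E}[\ell(\theta_t)]-F(Q_t)\bigr|
=\bigl|\mathbb{E}_{\widetilde Q_t}[\ell]-\mathbb{E}_{Q_t}[\ell]\bigr|
\le L\,\mathbb{E}\bigl[W_1(\widetilde Q_t,Q_t)\bigr]\le L\delta .
\]
Averaging over $t$ (using $\mathbb{E}[\widehat R_T]=\tfrac1T\sum_t\mathbb{E}[\ell(\theta_t)]$) and taking $\delta=\varepsilon/(2L)$ bounds the second term by $\varepsilon/2$, so the triangle inequality closes the error estimate. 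The complexity count is then immediate: the outer loop runs $T=O(M^2C/\varepsilon^2)$ iterations, each a single loss/gradient evaluation, and each iteration calls constrained HMC for $K(Q_t,\delta)\le Ad^{1/4}\log(B/\delta)=O\!\bigl(d^{1/4}\log(L/\varepsilon)\bigr)$ steps, for a total of $O\!\bigl(d^{1/4}M^2C\varepsilon^{-2}\log(L/\varepsilon)\bigr)$ inner HMC steps.

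The main obstacle is the rigorous mirror--descent analysis in the (infinite-dimensional) space of probability measures: one must verify that the updates \eqref{eq:md-update} are well-posed, that the $\phi$-divergence mirror map is genuinely $1$-strongly convex with respect to total variation on a neighborhood of $\mathcal Q_C$, and that the three-point identity underpinning the regret bound carries over to this setting --- these are exactly the hypotheses packaged into Assumption~2, so the real work is checking they can be invoked (or replacing them by a direct argument on the dual one-dimensional problem, as in Section~\ref{subsec:tilting}, for the KL case). A secondary point requiring care is the uniformity of the HMC mixing bound over the data-dependent iterates $Q_t$, which holds because all iterates stay in $\mathcal Q_C$ and Assumption~3's constants do not depend on the target; and one should record the mild condition $\delta=\varepsilon/(2L)<1/e$, i.e.\ $\varepsilon<2L/e$, which may be assumed without loss of generality.
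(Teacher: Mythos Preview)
Your proposal is correct and follows essentially the same approach as the paper: decompose the error into an optimization term (controlled by the standard mirror--descent regret bound using $1$--strong convexity of $h$ w.r.t.\ $\|\cdot\|_{\mathrm{TV}}$, the dual bound $\|g\|_*\le M$, and $D_\phi(Q^\star\Vert\Pi)\le C$ from $Q_1=\Pi$) plus a sampling term (controlled by Kantorovich--Rubinstein and the HMC Wasserstein bound), then read off the complexity. Your additional remarks---that $F(Q_t)\le F(Q^\star)$ makes the optimization bound one-sided, and that one implicitly needs $\delta=\varepsilon/(2L)<1/e$---are minor but useful observations not made explicit in the paper's proof.
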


\begin{proof}
Write
\[
  \mathcal{Q}_C := \{Q\ll\Pi : D_\phi(Q\Vert\Pi)\le C\},
\]
and define the convex functional
\[
  f(Q) := -F(Q)
  = -\int_\Theta \ell(\theta)\,Q(\mathrm d\theta),
  \qquad Q\in\mathcal{Q}_C.
\]
Maximizing $F$ over $\mathcal{Q}_C$ is equivalent to minimizing $f$ over
$\mathcal{Q}_C$; any minimizer of $f$ is a maximizer of $F$.

We use mirror descent with mirror map $h(Q):=D_\phi(Q\Vert\Pi)$ and
Bregman divergence $D_\phi(Q\Vert Q')$.  By assumption~(2), $h$ is
Fr\'echet differentiable and $1$--strongly convex with respect to the
total variation norm $\|\cdot\|_{\mathrm{TV}}$ on a neighborhood of
$\mathcal{Q}_C$, so that
\begin{equation}
\label{eq:h-strong-convex}
  D_\phi(Q\Vert Q')
  \;\ge\;
  \frac{1}{2}\,\|Q-Q'\|_{\mathrm{TV}}^2
  \qquad\text{for all $Q,Q'$ in this neighborhood.}
\end{equation}

The gradient of $f$ is the (constant) signed function
\[
  g(\theta) := -\ell(\theta),
\]
in the sense that for any signed perturbation $H$,
\[
  \mathrm D f(Q)[H]
  = -\int_\Theta \ell(\theta)\,H(\mathrm d\theta)
  = \bigl\langle g, H\bigr\rangle.
\]
We view $g$ as an element of the dual space with dual norm
\[
  \|g\|_*
  := \sup_{\|H\|_{\mathrm{TV}}\le 1}
      \bigl|\langle g,H\bigr|\;.
\]
By boundedness of $\ell$ and the definition of the total variation norm,
\[
  \|g\|_*
  = \sup_{\|H\|_{\mathrm{TV}}\le 1}
      \left|\int_\Theta -\ell(\theta)\,H(\mathrm d\theta)\right|
  \le
  \sup_{\theta}|\ell(\theta)|\,
  \sup_{\|H\|_{\mathrm{TV}}\le 1} \|H\|_{\mathrm{TV}}
  \le M.
\]

\paragraph{Step 1: Mirror--descent inequality with exact iterates.}
The mirror--descent update \eqref{eq:md-update} can be written as
\[
  Q_{t+1}
  = \arg\min_{Q\in\mathcal{Q}_C}
      \Bigl\{
        \eta\,\langle g,Q\rangle
        + h(Q) - h(Q_t) - \bigl\langle\nabla h(Q_t),Q-Q_t\bigr\rangle
      \Bigr\}.
\]
By first--order optimality, for every $Q\in\mathcal{Q}_C$ we have
\begin{equation}
\label{eq:FOC}
  \Big\langle
    \eta g + \nabla h(Q_{t+1}) - \nabla h(Q_t),
    Q - Q_{t+1}
  \Big\rangle
  \;\ge\; 0.
\end{equation}

The three--point identity for Bregman divergences gives, for any
$Q,Q',Q''$,
\[
  \bigl\langle\nabla h(Q') - \nabla h(Q''),
                         Q - Q'\bigr\rangle
  = D_\phi(Q\Vert Q'') - D_\phi(Q\Vert Q') - D_\phi(Q'\Vert Q'').
\]
Applying this with $Q'=Q_{t+1}$, $Q''=Q_t$ and $Q=Q^\star$ yields
\[
  \bigl\langle\nabla h(Q_{t+1}) - \nabla h(Q_t),
                         Q^\star - Q_{t+1}\bigr\rangle
  = D_\phi(Q^\star\Vert Q_t)
    - D_\phi(Q^\star\Vert Q_{t+1})
    - D_\phi(Q_{t+1}\Vert Q_t).
\]

Now set $Q=Q^\star$ in \eqref{eq:FOC} to obtain
\[
  \eta\,\bigl\langle g,Q^\star - Q_{t+1}\bigr\rangle
  \;\ge\;
  D_\phi(Q^\star\Vert Q_t)
  - D_\phi(Q^\star\Vert Q_{t+1})
  - D_\phi(Q_{t+1}\Vert Q_t).
\]
Rearranging,
\begin{equation}
\label{eq:gdiff-Qtp1}
  \bigl\langle g,Q_{t+1} - Q^\star\bigr\rangle
  \;\le\;
  \frac{1}{\eta}
  \Bigl(
    D_\phi(Q^\star\Vert Q_t)
    - D_\phi(Q^\star\Vert Q_{t+1})
    - D_\phi(Q_{t+1}\Vert Q_t)
  \Bigr).
\end{equation}

We now relate $Q_{t+1}-Q^\star$ to $Q_t-Q^\star$:
\[
  \bigl\langle g,Q_t - Q^\star\bigr\rangle
  =
  \bigl\langle g,Q_t - Q_{t+1}\bigr\rangle
  + \bigl\langle g,Q_{t+1} - Q^\star\bigr\rangle.
\]
Using \eqref{eq:gdiff-Qtp1} and then applying Cauchy--Schwarz and the
strong convexity \eqref{eq:h-strong-convex}, we obtain
\begin{align*}
  \bigl\langle g,Q_t - Q^\star\bigr\rangle
  &\le
    \bigl\langle g,Q_t - Q_{t+1}\bigr\rangle\\
  &\quad\;+
    \frac{1}{\eta}
    \Bigl(
      D_\phi(Q^\star\Vert Q_t)
      - D_\phi(Q^\star\Vert Q_{t+1})
      - D_\phi(Q_{t+1}\Vert Q_t)
    \Bigr)\\[0.5em]
  &\le
    \|g\|_*\,\|Q_t - Q_{t+1}\|_{\mathrm{TV}}\\
  &\quad\;+
    \frac{1}{\eta}
    \Bigl(
      D_\phi(Q^\star\Vert Q_t)
      - D_\phi(Q^\star\Vert Q_{t+1})
      - \tfrac12\|Q_{t+1}-Q_t\|_{\mathrm{TV}}^2
    \Bigr).
\end{align*}
By the elementary inequality
$ab \le \tfrac{\eta}{2}a^2 + \tfrac{1}{2\eta}b^2$ with
$a=\|g\|_*$, $b=\|Q_t - Q_{t+1}\|_{\mathrm{TV}}$, we have
\[
  \|g\|_*\,\|Q_t - Q_{t+1}\|_{\mathrm{TV}}
  \le \frac{\eta}{2}\|g\|_*^2
     + \frac{1}{2\eta}\|Q_t - Q_{t+1}\|_{\mathrm{TV}}^2.
\]
Substituting this above, the $\|Q_t - Q_{t+1}\|_{\mathrm{TV}}^2$ terms
cancel and we obtain the one--step mirror--descent inequality
\begin{equation}
\label{eq:md-onestep-final}
  \bigl\langle g,Q_t - Q^\star\bigr\rangle
  \;\le\;
  \frac{1}{\eta}
  \Bigl(
    D_\phi(Q^\star\Vert Q_t)
    - D_\phi(Q^\star\Vert Q_{t+1})
  \Bigr)
  + \frac{\eta}{2}\,\|g\|_*^2.
\end{equation}

\paragraph{Step 2: Optimization error (exact iterates).}
Summing \eqref{eq:md-onestep-final} over $t=1,\dots,T$ gives
\[
  \sum_{t=1}^T
  \bigl\langle g,Q_t - Q^\star\bigr\rangle
  \;\le\;
  \frac{1}{\eta}
  \Bigl(
    D_\phi(Q^\star\Vert Q_1)
    - D_\phi(Q^\star\Vert Q_{T+1})
  \Bigr)
  + \frac{\eta T}{2}\,\|g\|_*^2.
\]
Since $D_\phi(\cdot\Vert\cdot)\ge 0$ and $Q_1=\Pi$, we obtain
\[
  \sum_{t=1}^T
  \bigl\langle g,Q_t - Q^\star\bigr\rangle
  \;\le\;
  \frac{1}{\eta}
  D_\phi(Q^\star\Vert\Pi)
  + \frac{\eta T}{2}\,\|g\|_*^2
  \;\le\;
  \frac{C}{\eta}
  + \frac{\eta T}{2}\,\|g\|_*^2,
\]
because $Q^\star\in\mathcal{Q}_C$ implies
$D_\phi(Q^\star\Vert\Pi)\le C$.

Using $f(Q)=-F(Q)$ and the fact that $f$ is linear with gradient $g$,
we have
\[
  f(Q_t) - f(Q^\star)
  = \langle g,Q_t - Q^\star\rangle
  \quad\Longrightarrow\quad
  F(Q^\star) - F(Q_t)
  = \langle g,Q_t - Q^\star\rangle.
\]
Thus
\[
  \sum_{t=1}^T
  \bigl(F(Q^\star) - F(Q_t)\bigr)
  \;\le\;
  \frac{C}{\eta} + \frac{\eta T}{2}\,\|g\|_*^2.
\]
Dividing by $T$,
\[
  \frac{1}{T}\sum_{t=1}^T
  \bigl(F(Q^\star) - F(Q_t)\bigr)
  \;\le\;
  \frac{C}{\eta T} + \frac{\eta}{2}\,\|g\|_*^2.
\]
Since $F$ is linear (hence both convex and concave),
\[
  F(Q^\star) - F(\widehat Q_T)
  = F(Q^\star) - \frac{1}{T}\sum_{t=1}^T F(Q_t)
  = \frac{1}{T}\sum_{t=1}^T\bigl(F(Q^\star) - F(Q_t)\bigr),
\]
and we conclude
\begin{equation}
\label{eq:opt-gap}
  F(Q^\star) - F(\widehat Q_T)
  \;\le\;
  \frac{C}{\eta T} + \frac{\eta}{2}\,\|g\|_*^2.
\end{equation}

Using $\|g\|_*\le M$, \eqref{eq:opt-gap} becomes
\[
  F(Q^\star) - F(\widehat Q_T)
  \;\le\;
  \frac{C}{\eta T} + \frac{\eta M^2}{2}.
\]
The right--hand side is minimized over $\eta>0$ at
\[
  \eta^\star
  = \sqrt{\frac{2C}{M^2 T}},
\]
for which
\[
  \frac{C}{\eta^\star T}
  + \frac{\eta^\star M^2}{2}
  = \sqrt{2}\,M\sqrt{\frac{C}{T}}.
\]
Therefore
\begin{equation}
\label{eq:opt-error-final}
  F(Q^\star) - F(\widehat Q_T)
  \;\le\;
  \sqrt{2}\,M\sqrt{\frac{C}{T}}.
\end{equation}
In particular, if
\[
  T \;\ge\; \frac{8M^2 C}{\varepsilon^2},
  \qquad
  \eta = \eta^\star,
\]
then
\[
  F(Q^\star) - F(\widehat Q_T)
  \;\le\; \frac{\varepsilon}{2}.
\]

\paragraph{Step 3: Error from HMC approximation.}
At iteration $t$, let $\widetilde Q_t$ be the law of the HMC output after
$K(Q_t,\delta)$ steps targeting $Q_t$, and let
$\theta_t\sim\widetilde Q_t$.
By assumption~(3),
\[
  \mathbb{E}\bigl[W_1(\widetilde Q_t,Q_t)\bigr]\le\delta.
\]
Because $\ell$ is $L$--Lipschitz on $(\Theta,\|\cdot\|_2)$, the
Kantorovich--Rubinstein duality for $W_1$ yields
\[
  \bigl|
    \mathbb{E}_{\widetilde Q_t}[\ell]
    - \mathbb{E}_{Q_t}[\ell]
  \bigr|
  \le
  L\,W_1(\widetilde Q_t,Q_t).
\]
Taking expectations over the HMC randomness,
\[
  \bigl|
    \mathbb{E}[\ell(\theta_t)]
    - F(Q_t)
  \bigr|
  \le
  L\,\mathbb{E}[W_1(\widetilde Q_t,Q_t)]
  \le L\delta.
\]

Define
\[
  \widehat R_T
  = \frac{1}{T}\sum_{t=1}^T \ell(\theta_t),
  \qquad
  F(\widehat Q_T)
  = \frac{1}{T}\sum_{t=1}^T F(Q_t).
\]
Averaging the bound above over $t=1,\dots,T$ gives
\begin{equation}
\label{eq:hmc-bias}
  \bigl|
    \mathbb{E}[\widehat R_T]
    - F(\widehat Q_T)
  \bigr|
  \le L\delta.
\end{equation}
Hence, choosing
\[
  \delta = \frac{\varepsilon}{2L}
\]
ensures that the HMC--induced bias is at most $\varepsilon/2$.

\paragraph{Step 4: Combining optimization and sampling errors.}
We now bound the total error
\[
  R^{\mathrm{WH}}(\Pi,C) - \mathbb{E}[\widehat R_T]
  = F(Q^\star) - \mathbb{E}[\widehat R_T].
\]
By the triangle inequality and \eqref{eq:opt-error-final},
\eqref{eq:hmc-bias},
\begin{align*}
  \bigl|
    F(Q^\star) - \mathbb{E}[\widehat R_T]
  \bigr|
  &\le
    \bigl|F(Q^\star) - F(\widehat Q_T)\bigr|
    + \bigl|F(\widehat Q_T) - \mathbb{E}[\widehat R_T]\bigr|\\
  &\le
    \sqrt{2}\,M\sqrt{\frac{C}{T}} + L\delta.
\end{align*}
With $T$ and $\eta$ chosen as in Step~2 and
$\delta=\varepsilon/(2L)$ as in Step~3, we have
\[
  \sqrt{2}\,M\sqrt{\frac{C}{T}}
  \;\le\; \frac{\varepsilon}{2},
  \qquad
  L\delta = \frac{\varepsilon}{2},
\]
and therefore
\[
  \bigl|
    R^{\mathrm{WH}}(\Pi,C) - \mathbb{E}[\widehat R_T]
  \bigr|
  \le \varepsilon.
\]

\paragraph{Step 5: Complexity of HMC sampling.}
By assumption~(3), achieving Wasserstein accuracy $\delta$ for each
$Q_t$ requires at most
\[
  K(Q_t,\delta)
  \le A\,d^{1/4}\log\frac{B}{\delta}
\]
constrained HMC steps.  With $\delta=\varepsilon/(2L)$ this is
\[
  K(Q_t,\delta)
  = O\!\left(
      d^{1/4}\log\frac{L}{\varepsilon}
    \right),
\]
uniformly in $t$.  Since we run HMC once per mirror--descent iteration,
the total number of HMC steps is
\[
  T\,K(Q_t,\delta)
  = O\!\left(
      d^{1/4}\,
      \frac{M^2 C}{\varepsilon^2}\,
      \log\frac{L}{\varepsilon}
    \right),
\]
as claimed.  The number of mirror--descent iterations (and thus loss
evaluations) is $T=O(M^2 C/\varepsilon^2)$.
\end{proof}


\end{document}